\documentclass[11pt]{article}

\usepackage{preprint}
\usepackage{graphicx}
\usepackage{agafonov}
\usepackage{amssymb}
\usepackage{array,booktabs,makecell,colortbl,arydshln}
\usepackage{algorithm}
\usepackage[noend]{algorithmic}
\usepackage[textsize=footnotesize]{todonotes}

\usepackage{xurl}
\usepackage{hyperref}
\hypersetup{
  pdftitle={Matching the Lower Bounds: Stochastic Contracting Cubic Newton and Its Optimal Acceleration},
  pdfauthor={Artem Agafonov, Aslan Ashabokov, Dmitry Kamzolov, Alexander D'yakonov, Alexander Gasnikov, Martin Tak\'a\v{c}}
}

\newsavebox{\contractingfwbox}
\newsavebox{\contractingdomainbox}

\newtheorem{theorem}{Theorem}[section]
\newtheorem{lemma}[theorem]{Lemma}
\newtheorem{proposition}[theorem]{Proposition}

\newtheorem{assumption}[theorem]{Assumption}

\theoremstyle{plain}

\begin{document}

\title{Matching the Lower Bounds: Stochastic Contracting Cubic Newton and Its Optimal Acceleration}
\author{%
\begin{tabular}{c@{\hspace{2em}}c@{\hspace{2em}}c}
Artem Agafonov$^{1}$ & Aslan Ashabokov$^{2}$ & Dmitry Kamzolov \\[6pt]
Alexander D'yakonov & Alexander Gasnikov$^{2}$ & Martin Tak\'a\v{c}$^{1}$
\end{tabular}\\[10pt]
{\normalsize $^{1}$MBZUAI\qquad $^{2}$MIRAI}}
\maketitle

\begin{abstract}
  We study second-order methods for convex stochastic optimization, where gradients and Hessians are available only through stochastic estimates with variances $\sigma_1^2$ and $\sigma_2^2$, respectively. First, we propose the Stochastic Contracting Cubic Newton method. At each iteration, it minimizes a cubic model with additional quadratic regularization and then contracts the step toward the current point. After $T$ iterations, the method achieves the expected convergence rate $\mathcal{O}(\sigma_1/\sqrt{T}+\sigma_2/T+1/T^2)$. Building on this construction, we develop an accelerated variant achieving $\mathcal{O}(\sigma_1/\sqrt{T}+\sigma_2/T^2+1/T^{7/2})$, matching the known lower bounds of~\citet{agafonov2024advancing} in all three terms.
\end{abstract}

\section{Introduction}
\label{sec:introduction}
In this paper, we consider the problem 
\begin{equation}
  \label{eq:problem}
  \min_{x\in\XCal} f(x),
\end{equation}
where $f:\R^d\to\R$ is convex and twice continuously differentiable, and $\XCal\subseteq\R^d$ is a nonempty closed convex set of diameter $D>0$. We assume that a solution $x^{\ast}\in \XCal$ exists and we denote $f^{\ast} = f(x^{\ast})$.

In many applications, exact gradients and Hessians of $f$ are expensive or unavailable. A typical example is machine learning, where the objective is a finite sum or an expectation over data, and only stochastic estimates of its derivatives can be computed at a reasonable cost. This motivates optimization methods that work with inexact and stochastic derivatives.

\textbf{First-order methods.} 
For convex functions with $L_1$-Lipschitz continuous gradients and stochastic gradients with variance $\sigma_1^2$,~\citet{nemirovskiĭ1983problem} established the first-order complexity lower bound
$\Omega \ls
\tfrac{\sigma_1 R}{\sqrt T}
+
\tfrac{L_1 R^2}{T^2}
\rs$.
The accelerated gradient method of~\citet{nesterov1983method} matches the second term in the exact case. Stochastic approximation methods~\citep{nemirovski2009robust} attain the optimal dependence on the noise, but their deterministic rate is non-accelerated. Matching both terms with a single method remained open for nearly three decades. The gap was closed by the accelerated stochastic approximation method of~\citet{lan2012optimal}.

\textbf{Second-order methods.} In contrast, the theory of stochastic second-order optimization is less complete. The lower bound
$\Omega \ls
\tfrac{\sigma_1 R}{\sqrt T}
+
\tfrac{\sigma_2 R^2}{T^{2}}
+
\tfrac{L_2 R^3}{T^{7/2}}
\rs$ 
was established only recently~\citep{agafonov2024advancing}. Existing methods match this lower bound partially. The adaptive Extra-Newton method of~\citet{antonakopoulos2022extra} achieves the optimal dependence on gradient noise. Stochastic accelerated cubic Newton of~\citet{agafonov2024advancing} matches both stochastic terms in the lower bound, while its rate in the exact case is $\cO(T^{-3})$. More recently,~\citet{chen2026optimal} showed that optimal acceleration is possible with inexact Hessians but exact gradients. These rates are summarized in Table~\ref{tab:convex_comparison}.

Despite this progress, \emph{developing an optimal stochastic second-order method remains an open problem}.

Even more surprisingly, to the best of our knowledge, there is no basic (non-accelerated) stochastic second-order method that attains the optimal gradient noise term.

\begin{table}[t]
  \centering
  \small
  \definecolor{optimalcell}{RGB}{240,253,241}
  \caption{Convergence rates for convex inexact and stochastic second-order methods.}
  \label{tab:convex_comparison}
  \setlength{\tabcolsep}{4pt}
  \setlength{\arrayrulewidth}{0.4pt}
  \setlength{\aboverulesep}{0pt}
  \setlength{\belowrulesep}{0pt}
  \setlength{\dashlinedash}{0.2pt}
  \setlength{\dashlinegap}{1.5pt}
  \renewcommand{\arraystretch}{1.15}
  \makeatletter
  \let\comparison@BTnormal\@BTnormal
  \def\@BTnormal{%
    \adl@hline\adl@connect\@thisrulewidth
    \global\adl@finaldepth\z@
    \comparison@BTnormal}
  \makeatother
  \begin{tabular}{@{}
    >{\centering\arraybackslash}m{0.47\textwidth}:
    >{\centering\arraybackslash}m{0.11\textwidth}:
    >{\centering\arraybackslash}m{0.11\textwidth}:
    >{\centering\arraybackslash}m{0.11\textwidth}:
    >{\centering\arraybackslash}m{\dimexpr0.18\textwidth-8\tabcolsep-4\arrayrulewidth\relax}
  @{}}
    \toprule
    Method
      & \makecell{Gradient\\error term}
      & \makecell{Hessian\\error term}
      & Exact term
      & $\XCal$ \\
    \midrule
    \makecell{Inexact Cubic Newton ~\citep{agafonov2023inexact}}
      & $\cO\ls\delta_1\bar R\rs$
      & $\cO\ls\tfrac{\delta_2\bar R^2}{T}\rs$
      & $\cO\ls\tfrac{L_2\bar R^3}{T^2}\rs$
      & $\R^d$ \\
    \hdashline
    \makecell{Contracting-Domain Newton\\{~\citep{doikov2020convex}}~(Proposition~\ref{prop:weak_convergence_guarantee})}
      & $\cO\ls\sigma_1D\rs$
      & $\cO\ls\tfrac{\sigma_2D^2}{T}\rs$
      & $\cO\ls\tfrac{L_2D^3}{T^2}\rs$
      & compact \\
    \hdashline
    \makecell{Accelerated Inexact Cubic Newton\\{~\citep{agafonov2023inexact}}}
      & $\cO\ls\delta_1 \bar{D}\rs$
      & \cellcolor{optimalcell} $\cO\ls\tfrac{\delta_2R^2}{T^2}\rs$
      & $\cO\ls\tfrac{L_2R^3}{T^3}\rs$
      & $\R^d$ \\
    \hdashline
    \makecell{Extra-Newton~\citep{antonakopoulos2022extra}}
      & \cellcolor{optimalcell} $\cO\ls\tfrac{\sigma_1D}{\sqrt T}\rs$
      & $\cO\ls\tfrac{\sigma_2D^2}{T^{3/2}}\rs$
      & $\cO\ls\tfrac{L_2D^3}{T^3}\rs$
      & compact \\
    \hdashline
    \makecell{Accelerated Stochastic Cubic Newton\\{~\citep{agafonov2024advancing}}}
      & \cellcolor{optimalcell} $\cO\ls\tfrac{\sigma_1R}{\sqrt T}\rs$
      & \cellcolor{optimalcell} $\cO\ls\tfrac{\sigma_2R^2}{T^2}\rs$
      & $\cO\ls\tfrac{L_2R^3}{T^3}\rs$
      & $\R^d$ \\
    \hdashline
    \makecell{AINE{~\citep{chen2026optimal}}}
      & \xmark
      & \cellcolor{optimalcell} $\cO\ls\tfrac{\delta_2R^2}{T^2}\rs$
      & \cellcolor{optimalcell} $\cO\ls\tfrac{L_2R^3}{T^{7/2}}\rs$
      & $\R^d$ \\
    \midrule
    \makecell{Lower bound~\citep{agafonov2024advancing}}
      & \cellcolor{optimalcell} $\Omega\ls\tfrac{\sigma_1R}{\sqrt T}\rs$
      & \cellcolor{optimalcell} $\Omega\ls\tfrac{\sigma_2R^2}{T^2}\rs$
      & \cellcolor{optimalcell} $\Omega\ls\tfrac{L_2R^3}{T^{7/2}}\rs$
      & $\R^d$ \\
    \midrule
    \makecell{Stochastic Contracting Cubic Newton\\(This work, Theorem~\ref{thm:basic_convergence})}
      & \cellcolor{optimalcell} $\cO\ls\tfrac{\sigma_1D}{\sqrt T}\rs$
      & $\cO\ls\tfrac{\sigma_2D^2}{T}\rs$
      & $\cO\ls\tfrac{L_2D^3}{T^2}\rs$
      & compact \\
    \hdashline
    \makecell{Accelerated Stochastic Contracting Cubic Newton\\(This work, Theorem~\ref{thm:optimal_convergence})}
      & \cellcolor{optimalcell} $\cO\ls\tfrac{\sigma_1D}{\sqrt T}\rs$
      & \cellcolor{optimalcell} $\cO\ls\tfrac{\sigma_2D^2}{T^2}\rs$
      & \cellcolor{optimalcell} $\cO\ls\tfrac{L_2D^3}{T^{7/2}}\rs$
      & compact \\
    \bottomrule
  \end{tabular}
  \par\smallskip
  \begin{minipage}{\textwidth}
    \scriptsize
    Green cells mark the dependence on $T$ matching the corresponding lower-bound term.
    $R=\norm{x_0-x^*}$, $D=\operatorname{diam}(\XCal)$. For methods from~\citep{agafonov2023inexact} $\bar R$ is the diameter of the level set, $\bar{D}$ bounds the distances between iterates and $x^*$.
    The parameters $\delta_1,\delta_2$ denote gradient and Hessian inexactness $\norm{g(x)-\g(x)}\leq\delta_1$ and $\norm{H(x)-\h(x)}\leq\delta_2$.
  \end{minipage}
\end{table}

\textbf{Contributions.} Our main result is the Accelerated Stochastic Contracting Cubic Newton method. For convex optimization over compact set it achieves the convergence rate
 $\cO \ls
\tfrac{\sigma_1 D}{\sqrt T}
+
\tfrac{\sigma_2 D^2}{T^{2}}
+
\tfrac{L_2 D^3}{T^{7/2}}
\rs$
matching the lower bound.

We summarize our contributions as follows:
\begin{itemize}
  \item \textbf{Basic method.}
    We introduce the Stochastic Contracting Cubic Newton method 
    (Algorithm~\ref{alg:basic_method}), which attains the optimal dependence on gradient noise,
    $\cO(\sigma_1 D/\sqrt{T})$.
    To the best of our knowledge, it is the first non-accelerated globally convergent stochastic
    second-order method with this dependence.
    The method is obtained by augmenting the Contracting-Domain Newton method
    of~\citet{doikov2020convex} with an additional quadratic regularization centered at the
    previous auxiliary point.
    \item \textbf{Reduction to a first-order scheme.} We show that  removing the Hessian and the cubic regularization from Stochastic Contracting Cubic Newton reduces it to the projected stochastic gradient method with averaging. We prove the convergence rate  $\cO\ls\tfrac{\sigma_1 D}{\sqrt T}+\tfrac{L_1 D^2}{T}\rs$ for this first-order scheme. In contrast, removing the Hessian term from the contracting domain Newton method
    of~\citet{doikov2020convex} leads to a naive stochastic Frank-Wolfe update, which may fail to converge under gradient noise.
    This reduction provides a natural explanation for why the additional quadratic regularization
    yields the optimal $\cO(\sigma_1 D/\sqrt{T})$ dependence on gradient noise.
  \item \textbf{Accelerated method}. We propose the Accelerated Stochastic Contracting Cubic Newton method (Algorithm~\ref{alg:optimal_method}), which combines
    ideas from the Contracting-Domain Newton method of~\citet{doikov2020convex},
    the optimal Monteiro-Svaiter acceleration of~\citet{carmon2022optimal},
    and the accelerated stochastic second-order scheme of~\citet{agafonov2024advancing}.
    We show that the resulting method simultaneously attains the optimal dependence on
    gradient noise, Hessian noise, and second-order smoothness.
\end{itemize}

\section{Related Work}

Classical Newton method is a standard tool in smooth minimization, known for its fast local quadratic convergence~\citep{bennett1916newton,kantorovich1949newton,nesterov2018lectures}. Its variants are used, for example, as logistic regression solvers
in the \texttt{scikit-learn} library~\citep{pedregosa2011scikit}.

A major advance in the development of globally convergent Newton-type
methods was cubic regularization~\citep{nesterov2006cubic}. For convex objectives with Lipschitz-continuous Hessians, this method
achieves a global convergence rate of $\cO(L_2 \bar{R}^3/T^2)$, matching
the dependence on $T$ of accelerated gradient
descent~\citep{nesterov1983method}. Later, it was accelerated to $\cO(L_2R^3/T^3)$ by~\citet{nesterov2008accelerating}. Further,~\citet{monteiro2013accelerated} proposed the accelerated
Newton Proximal Extragradient,  which achieves lower complexity bound $\Omega \ls{L_2R^3}/{T^{7/2}}\rs$ \citep{pmlr-v75-agarwal18a,arjevani2019oracle} up to a logarithmic factor. This gap was closed
by~\citet{kovalev2022first} and~\citet{carmon2022optimal}. Under strong convexity, cubic regularized Newton also achieves global superlinear convergence~\citep{kamzolov2025optami}.

Another line of work reduces the computational cost of Newton-type
methods by avoiding cubic subproblems.
\citet{mishchenko2023regularized,doikov2023gradient} use
gradient-dependent quadratic regularization,
while~\citet{hanzely2022damped} introduce an explicit damping rule
for the Newton step under semi-strong self-concordance. 
\citet{doikov2026minimizing} further establishes global linear
convergence of the gradient-regularized Newton method
for quasi-self-concordant functions. For optimization over compact domains,~\citet{doikov2020convex}
proposed Contracting-Domain Newton methods that solve quadratic
subproblems and achieve a global convergence rate of $\cO(L_2D^3/T^2)$.
They also extended these methods to finite-sum minimization
using increasing batch sizes and variance reduction.

Another line of work studies methods with inexact or stochastic derivatives.
\citet{ghadimi2017second} proposed basic and accelerated cubic Newton
methods with inexact Hessians.
\citet{agafonov2023inexact} extended this approach to inexact gradients
and higher-order derivatives.
Accelerated methods with stochastic gradients and Hessians were
developed by~\citet{antonakopoulos2022extra} and~\citet{agafonov2024advancing}. 
\citet{chayti2025improving} proposed stochastic cubic Newton with momentum and, for convex objectives with bounded iterates, obtained a gradient-noise term of order $T^{-2/5}$, which is slower than the optimal $T^{-1/2}$. Recently,~\citet{chen2026optimal} proposed the Accelerated Inexact
Newton Extragradient method, which achieves the optimal rate with exact gradients and inexact Hessians. Inexact Hessians have also been studied under generalized 
smoothness assumptions~\citep{semenov2026gradient,agafonov2026optimal}.

Inexact Hessians can reduce the computational and communication costs
of second-order methods.~\citet{kamzolov2026accelerated} use inexact cubic regularization to develop
Quasi Newton methods with global convergence guarantees. Lazy Hessian updates reduce computational costs by reusing Hessians
across several iterations~\citep{doikov2023second,zhang2026nearoptimalconvexoptimizationlazy}. In distributed optimization, local Hessian
approximations and approximate consensus help control communication
costs~\citep{zhang2015disco,agafonov2021accelerated,dvurechensky2022hyperfast,agafonov2026decentralized}.

\section{Preliminaries}
\label{sec:preliminaries}

We use Euclidean norms for vectors and spectral norms for matrices. Throughout the paper, we assume that the Hessian of $f$ is $L_2$-Lipschitz continuous.
\begin{assumption}
  \label{as:lip_hess}
  The function $f \in \mathcal{C}^2$ satisfies
  \begin{equation}
    \label{eq:lip_hess}
    \norm{\h(x)-\h(y)} \leq L_2\norm{x-y},
    \quad x,y\in\XCal.
  \end{equation}
\end{assumption}
Under Assumption~\ref{as:lip_hess}, the following standard Taylor remainder bounds hold.
\begin{lemma}[{\citep[Lemma~1.2.4]{nesterov2018lectures}}]
  \label{lem:taylor_remainder}
  Let $f$ be twice continuously differentiable and satisfy~\eqref{eq:lip_hess}. Then, for every $x,y\in\XCal$,
  \begin{equation*}
    \begin{aligned}
      \bigl|f(y)&-f(x)-\la\g(x),y-x\ra 
      -\tfrac{1}{2}\la\h(x)(y-x),y-x\ra\bigr|
      \leq\tfrac{L_2}{6}\norm{y-x}^3.
    \end{aligned}
  \end{equation*}
  \begin{equation*}
    \norm{\g(y)-\g(x)-\h(x)(y-x)}
    \leq\tfrac{L_2}{2}\norm{y-x}^2.
  \end{equation*}
\end{lemma}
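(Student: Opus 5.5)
We prove the two Taylor bounds of Lemma~\ref{lem:taylor_remainder} with the integral form of Taylor's theorem along the segment from $x$ to $y$, using~\eqref{eq:lip_hess} on points of that segment. Fix $x,y\in\XCal$ and set $h=y-x$. Since $\XCal$ is convex, $x+\tau h\in\XCal$ for all $\tau\in[0,1]$, so~\eqref{eq:lip_hess} applies to every pair of points on the segment. This is the only place convexity of $\XCal$ is used.

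\textbf{Gradient bound.} Since $f\in\mathcal{C}^2$, the fundamental theorem of calculus applied to $\tau\mapsto\g(x+\tau h)$ gives
\begin{equation*}
\g(y)-\g(x)-\h(x)h=\int_0^1\bigl(\h(x+\tau h)-\h(x)\bigr)h\,d\tau .
\end{equation*}
We take norms, move the norm inside the integral, and bound $\norm{\h(x+\tau h)-\h(x)}\le L_2\tau\norm{h}$ by~\eqref{eq:lip_hess}. The result is at most $L_2\norm{h}^2\int_0^1\tau\,d\tau=\tfrac{L_2}{2}\norm{h}^2$, which is the second inequality.

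\textbf{Function value bound.} Write
\begin{equation*}
f(y)-f(x)-\la\g(x),h\ra=\int_0^1\la\g(x+\tau h)-\g(x),h\ra\,d\tau .
\end{equation*}
Subtract $\tfrac12\la\h(x)h,h\ra=\int_0^1\tau\la\h(x)h,h\ra\,d\tau$ from both sides. The integrand becomes $\la\g(x+\tau h)-\g(x)-\h(x)(\tau h),h\ra$. We apply the gradient bound just proved to the pair $(x,x+\tau h)$, then Cauchy--Schwarz, so the integrand is bounded in absolute value by $\tfrac{L_2}{2}\tau^2\norm{h}^3$. Integrating over $\tau\in[0,1]$ gives $\tfrac{L_2}{6}\norm{h}^3$, and this absolute-value estimate gives the two-sided first inequality.

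There is no real obstacle. The only point needing care is to prove the gradient estimate first and then reuse it at the intermediate points $x+\tau h$; the rest is routine integration. This matches the standard argument cited from~\citep[Lemma~1.2.4]{nesterov2018lectures}.
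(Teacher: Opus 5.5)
Your proof is correct and is the standard integral-form Taylor argument. The paper gives no proof of its own and simply cites Nesterov's Lemma~1.2.4, which is proved the same way: the gradient bound first, then reused at intermediate points of the segment. You also correctly note the one detail the citation leaves implicit: convexity of $\XCal$ keeps the segment inside the set where the Hessian-Lipschitz assumption is imposed.
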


We assume that stochastic gradients satisfy the following condition.
\begin{assumption}
  \label{as:stoch_grad}
  For every $x\in\XCal$, a stochastic gradient $g(x,\xi)\in\R^d$ satisfies
  \begin{equation}
    \label{eq:stoch_grad}
    \E\lp\norm{g(x,\xi)-\g(x)}^2\mid x\rp \leq \sigma_1^2.
  \end{equation}
\end{assumption}
Some of our results also require unbiasedness of the gradient.
\begin{assumption}
  \label{as:unbiased_grad}
  For every $x\in\XCal$, a stochastic gradient $g(x,\xi)\in\R^d$ satisfies:
  \begin{equation}
    \label{eq:unbiased_grad}
    \E[g(x,\xi)\mid x] = \g(x).
  \end{equation}
\end{assumption}

We make the following assumption on stochastic Hessians.
\begin{assumption}
  \label{as:stoch_hess}
  For every $x\in\XCal$, a stochastic Hessian $H(x,\xi)\in\R^{d\times d}$ is symmetric and positive semidefinite almost surely, and satisfies
  \begin{equation}
    \label{eq:stoch_hess}
    \E\lp\norm{H(x,\xi)-\h(x)}^2\mid x\rp \leq \sigma_2^2.
  \end{equation}
\end{assumption}

We do not require the Hessian estimates to be unbiased. In many applications, stochastic Hessian estimates are naturally symmetric and positive semidefinite (e.g., mini-batch Hessians of convex loss functions). At each iteration, the random sample $\xi$ is drawn independently of the past and may be shared between the gradient and Hessian estimates. We suppress the dependence on $\xi$ and write $g(x)$ and $H(x)$ for the sampled estimates.

\subsection{Contracting Newton Method}

In this subsection, we review the Contracting-Domain Newton
method~\citep[Algorithms~1,2]{doikov2020convex}, which forms the basis
for the algorithms proposed in this work.

At iteration $t\geq0$, given $g_t=g(x_t)$ and $H_t=H(x_t)$,
define the quadratic model
\begin{equation}
  \label{eq:quadratic_model}
  \phi_{t, \alpha} (u) = \la g_t, u - x_t \ra + \tfrac{\alpha}{2} \la H_t  (u - x_t), u - x_t \ra
\end{equation}
for some $0 < \alpha \leq 1$. Starting from $x_0\in\XCal$, the method performs
\begin{equation}
  \label{eq:nikita_cn}
  u_{t+1} = \argmin_{u \in \XCal} \phi_{t, \alpha_{t}} (u),~x_{t+1} = x_t + \alpha_t (u_{t+1} - x_t).
\end{equation}
With exact derivatives \citet[Theorem 1]{doikov2020convex} establish the global rate $\cO(L_2D^3/t^2)$ for $\alpha_t = \cO(1/t)$, matching cubic Newton~\citep{nesterov2006cubic}.

One interpretation of such scheme is a second-order Frank-Wolfe-type method~\citep{frank1956algorithm}, in which the linear first-order model is replaced by the quadratic one.  Equivalently, one iteration of the procedure~\eqref{eq:nikita_cn} can be considered as a minimizer of the model $\phi_{t, 1} (u)$ over a contracted domain $\YCal_t = (1 - \alpha_t)x_{t} + \alpha_t \XCal$:
\begin{equation*}
  x_{t+1} = \argmin_{u \in \YCal_t} \lb \la g_t, u - x_{t} \ra + \tfrac{1}{2} \la H_t  (u - x_{t}), u - x_{t} \ra \rb.
\end{equation*}
For finite-sum problems,~\citet[Theorem 5]{doikov2020convex} give sufficient condition on gradient and Hessian batch sizes to recover $\cO(1/t^2)$ rate. The following proposition shows that these methods can be directly applied for stochastic optimization. The resulting rate matches the convergence of inexact cubic Newton~\citep[Theorem 2]{agafonov2023inexact}.
\begin{proposition}
  \label{prop:weak_convergence_guarantee}
  Let $f$ be convex. 
  Let Assumptions~\ref{as:lip_hess},~\ref{as:stoch_grad}, and~\ref{as:stoch_hess} hold. 
  Then the procedure~\eqref{eq:nikita_cn} with $\alpha_t = 3/(t+3)$ satisfies, for every  $T\geq1$,
  \begin{equation}
    \label{eq:weak_convergence_guarantee}
    \E\lp f(x_T)-f^*\rp
    =\cO\ls\sigma_1D +\tfrac{\sigma_2D^2}{T} + \tfrac{L_2 D^3}{T^2}\rs.
  \end{equation}
\end{proposition}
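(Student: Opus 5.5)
Following \citet[Theorem~1]{doikov2020convex}, we establish a one-step recursion for $\E[f(x_{t+1})-f^*]$ and unroll it with the weights implied by $\alpha_t=3/(t+3)$. The stochastic errors in $g_t$ and $H_t$ are treated as additive inexactness terms. Only Assumption~\ref{as:stoch_grad} is used, so the gradient error is bounded in absolute value and not exploited through cancellation. This is why the gradient term in~\eqref{eq:weak_convergence_guarantee} does not decay with $T$.

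\textbf{One-step bound.} Fix $t$ and write $x_{t+1}-x_t=\alpha_t(u_{t+1}-x_t)$, so $\norm{x_{t+1}-x_t}\le\alpha_t D$. By Lemma~\ref{lem:taylor_remainder},
\begin{equation*}
f(x_{t+1})\le f(x_t)+\alpha_t\la\g(x_t),u_{t+1}-x_t\ra+\tfrac{\alpha_t^2}{2}\la\h(x_t)(u_{t+1}-x_t),u_{t+1}-x_t\ra+\tfrac{L_2\alpha_t^3D^3}{6}.
\end{equation*}
Next, replace $\g(x_t),\h(x_t)$ by $g_t,H_t$. This costs at most $\alpha_t\norm{g_t-\g(x_t)}D+\tfrac{\alpha_t^2}{2}\norm{H_t-\h(x_t)}D^2$, and the first-order part equals $\alpha_t\phi_{t,\alpha_t}(u_{t+1})$.

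By optimality of $u_{t+1}$, we may compare with $u=x^*$, which gives $\phi_{t,\alpha_t}(u_{t+1})\le\phi_{t,\alpha_t}(x^*)$. We then convert back to exact derivatives at $x^*$, paying the same error terms once more. Applying the Taylor bound between $x_t$ and $y=x_t+\alpha_t(x^*-x_t)$ together with convexity yields
\begin{equation*}
\alpha_t\la\g(x_t),x^*-x_t\ra+\tfrac{\alpha_t^2}{2}\la\h(x_t)(x^*-x_t),x^*-x_t\ra\le f(y)-f(x_t)+\tfrac{L_2\alpha_t^3D^3}{6}\le\alpha_t(f^*-f(x_t))+\tfrac{L_2\alpha_t^3D^3}{6}.
\end{equation*}
Combining these steps gives, with $\delta_{1,t}=\norm{g_t-\g(x_t)}$ and $\delta_{2,t}=\norm{H_t-\h(x_t)}$,
\begin{equation*}
f(x_{t+1})-f^*\le(1-\alpha_t)(f(x_t)-f^*)+2\alpha_tD\delta_{1,t}+\alpha_t^2D^2\delta_{2,t}+\tfrac{L_2\alpha_t^3D^3}{3}.
\end{equation*}
Taking expectations and using Jensen's inequality, $\E\delta_{i,t}\le\sigma_i$ by Assumptions~\ref{as:stoch_grad} and~\ref{as:stoch_hess}.

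\textbf{Unrolling.} With $\alpha_t=3/(t+3)$, set $A_t=(t+1)(t+2)(t+3)$. Then $A_{t+1}(1-\alpha_{t+1})=A_t$ at the appropriate index, i.e.\ $\prod_{s}(1-\alpha_s)$ telescopes to $\Theta(1/T^3)$. Multiplying the recursion by $A_{t+1}$ and summing gives
\begin{equation*}
\E[f(x_T)-f^*]\lesssim\frac{1}{T^3}\sum_{t<T}\bigl(t^2 D\sigma_1+tD^2\sigma_2+L_2D^3\bigr).
\end{equation*}
Since $\alpha_t A_{t+1}\sim t^2$, $\alpha_t^2A_{t+1}\sim t$ and $\alpha_t^3A_{t+1}\sim 1$, the three sums are $\cO(T^3)$, $\cO(T^2)$ and $\cO(T)$. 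This yields $\sigma_1D+\sigma_2D^2/T+L_2D^3/T^2$. The initial term $A_0(f(x_0)-f^*)/A_T$ is $\cO(1/T^3)$ times $f(x_0)-f^*$. For $t=0$ we have $\alpha_0=1$, so the initial gap is annihilated exactly.

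\textbf{Main obstacle.} The delicate point is the comparison step. Because the quadratic model is scaled by $\alpha_t$ inside $\phi_{t,\alpha_t}$, we must check that plugging $u=x^*$ gives precisely the Taylor model at the contracted point $y$. We must also track the Hessian error at $x^*$, where $\norm{x^*-x_t}\le D$, and not only at $u_{t+1}$. A further requirement is that $H_t\succeq0$, which guarantees the subproblem is convex and the minimizer exists; this is provided by Assumption~\ref{as:stoch_hess}. The index bookkeeping in the telescoping with $\alpha_0=1$ is routine.
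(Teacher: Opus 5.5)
Your proposal is correct and follows essentially the same route as the paper: the paper obtains the same one-step recursion by specializing Lemma~\ref{lem:basic_one_step} to $M=\eta_t=0$ (Taylor upper bound, model optimality tested at $x^*$, Taylor lower bound at the contracted point $(1-\alpha_t)x_t+\alpha_t x^*$ plus convexity), then multiplies by $(t+1)(t+2)(t+3)$, telescopes, and uses $\alpha_0=1$ to eliminate the initial gap. The differences are cosmetic and do not affect the $\cO$ bound. Pairing your two gradient-error terms as $\alpha_t\la\g(x_t)-g_t,u_{t+1}-x^*\ra$ gives $\alpha_t\sigma_1D$ rather than $2\alpha_t\sigma_1D$. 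The step-$t$ inequality should be multiplied by your $A_t=(t+1)(t+2)(t+3)$, not $A_{t+1}$. Finally, existence of $u_{t+1}$ comes from compactness of $\XCal$; your comparison $\phi_{t,\alpha_t}(u_{t+1})\le\phi_{t,\alpha_t}(x^*)$ needs only that $u_{t+1}$ is a global minimizer, not $H_t\succeq0$.
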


Importantly, the bound above does not require unbiasedness of the stochastic derivatives. However, even if the gradient estimator is unbiased, there is a concrete obstacle in the analysis that prevents us from directly obtaining the desired $\cO(\sigma_1D/\sqrt{T})$ term.

Denote $\Delta_t = f(x_t) - f^*$. Let us introduce a one-step progress (see Lemma~\ref{lem:basic_one_step}) 
\begin{equation}
  \label{eq:nikita_one_step}
  \begin{aligned}
    \Delta_{t+1}
    \leq{} (1-\alpha_{t})\Delta_{t} 
    + \alpha_{t}\la\g(x_{t})-g_{t},u_{t+1}-x^*\ra
    +\alpha_{t}^2D^2\norm{H_{t}-\h(x_{t})}
    +\tfrac{L_2}{3}\alpha_{t}^3D^3.
  \end{aligned}
\end{equation}
Even when the gradient estimator is unbiased, the gradient error term does not disappear, since $u_{t+1}$ depends on $g_t$. Instead, applying Cauchy--Schwarz gives 
\begin{equation*}
  \begin{aligned}
    \E \Delta_{t+1}
    \leq{} (1-\alpha_{t})\E \Delta_{t} 
    + \alpha_{t}\sigma_1 D +\alpha_{t}^2 \sigma_2 D^2
    +\tfrac{L_2}{3}\alpha_{t}^3D^3.
  \end{aligned}
\end{equation*}
Telescoping this, we arrive at the rate~\eqref{eq:weak_convergence_guarantee}. In the next section, we provide the direct way to overcome this problem.

\section{Stochastic Contracting Cubic Newton}

We begin with introducing a new model of objective.
Given a realization of stochastic gradient $g_t$ and stochastic Hessian $H_t$, we define
\begin{equation}
  \begin{aligned}
  \label{eq:cubic_model}
  \omega^M_{t}(u) =~ \la g_{t},u-x_{t}\ra 
          +\tfrac{\alpha_t}{2}\la H_{t}(u-x_{t}),u-x_{t}\ra
          + \tfrac{\eta_t
          }{2}\|u - u_{t}\|^2 + \tfrac{M \alpha_t^2}{6}\norm{u-x_{t}}^3 
  \end{aligned}
\end{equation}
for some $\alpha_t,\eta_t$ and $M \geq 0$. The key difference with the quadratic model~\eqref{eq:quadratic_model} and methods of~\citet{doikov2020convex} is an additional quadratic regularization with parameter $\eta_t$. In this section, we choose
regularization parameters
\begin{equation}
   \label{eq:basic_reg}
    \alpha_t = \frac{3}{t+3}, \qquad \eta_t = \frac{\sigma_1 \sqrt{t+1}}{D}.
\end{equation}
We also allow an optional cubic regularization with parameter $M\geq0$, which is not required for our convergence guarantee. Beyond providing a more general formulation, it may also improve the stability of the method outside the bounded-domain setting.

The resulting Stochastic Contracting Cubic Newton method is listed as Algorithm~\ref{alg:basic_method}.
\begin{algorithm}[H]
  \caption{Stochastic Contracting Cubic Newton method}
  \label{alg:basic_method}
  \begin{algorithmic}[1]
    \STATE \textbf{Input:} $x_0 = u_0 \in \XCal$ and $M \geq0$.
    \FOR{$t=0,1,\ldots$}
      \STATE Query $g_{t} = g(x_{t})$ and $H_{t} = H(x_{t})$.
      \STATE Compute $u_{t+1}\in\argmin_{u\in \XCal} \omega_{t}^M (u)$. \label{line:minimize_model}
      \STATE Set $x_{t+1} = (1-\alpha_t)x_{t}+\alpha_t u_{t+1}$. \label{line:contraction_step}
    \ENDFOR
  \end{algorithmic}
\end{algorithm}

\subsection{Convergence Analysis}

The next lemma shows one step progress bound using smoothness, convexity and optimality condition.
\begin{lemma}
  \label{lem:basic_one_step_main}
  Let $f$ be convex and Assumption~\ref{as:lip_hess} hold. Let $M\geq0$. For the iterates of Algorithm~\ref{alg:basic_method}, we have
  \begin{equation*}
    \begin{aligned}
      \Delta_{t+1}
      \leq{}& (1-\alpha_t)\Delta_t 
      +\tfrac{\alpha_t\eta_t}{2}\ls
        \norm{u_t-x^*}^2-\norm{u_{t+1}-x^*}^2 \rs
      -\tfrac{\alpha_t\eta_t}{2}\norm{u_{t+1}-u_t}^2 
       \\
      ~~~~+&~\alpha_t\la\g(x_t)-g_t,u_{t+1}-x^*\ra + \alpha_t^2D^2\norm{H_t-\h(x_t)}
      +\tfrac{\max\lb M,L_2\rb}{3}\alpha_t^3D^3.
    \end{aligned}
  \end{equation*}
\end{lemma}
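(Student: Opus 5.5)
We bound $f(x_{t+1})$ through the Taylor expansion at $x_t$, compare the cubic model at $u_{t+1}$ with its value at $x^*$ using the optimality condition of the strongly convex subproblem in line~\ref{line:minimize_model}, and finally use convexity at $x^*$. Since $x_{t+1}-x_t=\alpha_t(u_{t+1}-x_t)$, Lemma~\ref{lem:taylor_remainder} gives
\begin{equation*}
  f(x_{t+1})\leq f(x_t)+\alpha_t\la\g(x_t),u_{t+1}-x_t\ra+\tfrac{\alpha_t^2}{2}\la\h(x_t)(u_{t+1}-x_t),u_{t+1}-x_t\ra+\tfrac{L_2\alpha_t^3}{6}\norm{u_{t+1}-x_t}^3 .
\end{equation*}
Next we replace $\g(x_t)$ by $g_t$, producing the term $\alpha_t\la\g(x_t)-g_t,u_{t+1}-x_t\ra$, and $\h(x_t)$ by $H_t$, producing $\tfrac{\alpha_t^2}{2}\norm{H_t-\h(x_t)}D^2$. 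We bound $\tfrac{L_2}{6}\alpha_t^3\norm{u_{t+1}-x_t}^3$ by $\tfrac{\max\{M,L_2\}}{6}\alpha_t^3\norm{u_{t+1}-x_t}^3$. After these substitutions, $f(x_{t+1})\leq f(x_t)+\alpha_t\,\omega^{M'}_t(u_{t+1})-\tfrac{\alpha_t\eta_t}{2}\norm{u_{t+1}-u_t}^2+(\text{errors})$, where $M'=\max\{M,L_2\}$. Because the model actually minimized uses $M$, we keep the term $\tfrac{(M'-M)\alpha_t^3}{6}\norm{u_{t+1}-x_t}^3\leq\tfrac{M'}{6}\alpha_t^3D^3$ as a separate error.

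The key step is the first-order optimality condition for $u_{t+1}$. The function $\omega^M_t$ is $\eta_t$-strongly convex because $H_t\succeq0$ and the cubic term is convex. Hence for every $u\in\XCal$, and in particular $u=x^*$,
\begin{equation*}
  \omega^M_t(x^*)\geq\omega^M_t(u_{t+1})+\tfrac{\eta_t}{2}\norm{x^*-u_{t+1}}^2 .
\end{equation*}
This yields $\alpha_t\omega^M_t(u_{t+1})+\tfrac{\alpha_t\eta_t}{2}\norm{u_{t+1}-x^*}^2\leq\alpha_t\la g_t,x^*-x_t\ra+\tfrac{\alpha_t^2}{2}\la H_t(x^*-x_t),x^*-x_t\ra+\tfrac{\alpha_t\eta_t}{2}\norm{x^*-u_t}^2+\tfrac{M\alpha_t^3}{6}\norm{x^*-x_t}^3$. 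This produces exactly the telescoping pair $\tfrac{\alpha_t\eta_t}{2}(\norm{u_t-x^*}^2-\norm{u_{t+1}-x^*}^2)$. The term $-\tfrac{\alpha_t\eta_t}{2}\norm{u_{t+1}-u_t}^2$ is simply retained from the model value at $u_{t+1}$ rather than dropped.

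It remains to handle the terms at $x^*$. We write $\la g_t,x^*-x_t\ra=\la\g(x_t),x^*-x_t\ra+\la g_t-\g(x_t),x^*-x_t\ra$. The last piece combines with $\la\g(x_t)-g_t,u_{t+1}-x_t\ra$ from before into $\alpha_t\la\g(x_t)-g_t,u_{t+1}-x^*\ra$, as in the statement. Then we replace $H_t$ by $\h(x_t)$ at $x^*$, at cost $\tfrac{\alpha_t^2}{2}D^2\norm{H_t-\h(x_t)}$. Together with the earlier identical term this gives $\alpha_t^2D^2\norm{H_t-\h(x_t)}$. Finally we use the Taylor lower bound from Lemma~\ref{lem:taylor_remainder} on the segment toward $x^*$: with $y=x_t+\alpha_t(x^*-x_t)$,
\begin{equation*}
  \alpha_t\la\g(x_t),x^*-x_t\ra+\tfrac{\alpha_t^2}{2}\la\h(x_t)(x^*-x_t),x^*-x_t\ra\leq f(y)-f(x_t)+\tfrac{L_2}{6}\alpha_t^3D^3,
\end{equation*}
and convexity gives $f(y)\leq(1-\alpha_t)f(x_t)+\alpha_tf^*$.

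Collecting the cubic terms gives at most $\tfrac{M'}{6}\alpha_t^3D^3$ from the model comparison (both $M$-terms, together with $\tfrac{L_2}{6}$ at $x^*$) plus the slack term. Done carefully, this should total $\tfrac{\max\{M,L_2\}}{3}\alpha_t^3D^3$. The main obstacle is precisely this bookkeeping when $M\neq L_2$. We avoid overcounting by keeping the case $M\geq L_2$ (drop the Taylor remainder into the model's cubic, leaving $\tfrac{M}{6}+\tfrac{L_2}{6}\le\tfrac{M}{3}$) and the case $M<L_2$ (slack $\tfrac{L_2-M}{6}$ plus $\tfrac{M}{6}+\tfrac{L_2}{6}\le\tfrac{L_2}{3}$) separate. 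Subtracting $f^*$ then gives the claim.
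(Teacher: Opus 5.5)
Your proof is correct and follows essentially the same route as the paper's proof: the Taylor upper bound at $x_t$, the strong-convexity comparison $\omega^M_t(u_{t+1})\leq\omega^M_t(x^*)-\tfrac{\eta_t}{2}\norm{u_{t+1}-x^*}^2$, the Taylor lower bound at $(1-\alpha_t)x_t+\alpha_tx^*$, and then convexity. Your case split on $M$ versus $L_2$ is the same bookkeeping as the paper's identity $M+L_2+(L_2-M)_+=L_2+\max\lb M,L_2\rb\leq2\max\lb M,L_2\rb$; just state the slack term as $\tfrac{(L_2-M)_+}{6}\alpha_t^3D^3$ throughout, not the looser $\tfrac{M'}{6}\alpha_t^3D^3$ from your earlier paragraph, which would overcount when $M<L_2$.
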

Comparing with~\eqref{eq:nikita_one_step}, this bound has additional terms involving $\eta_t$, which will allow us to take expectation properly. First,
\begin{equation*}
   \E  \langle \g(x_t)- g_t,  u_{t+1}-x^*\rangle
    = \E \langle\g(x_t)-g_t,u_{t+1}-u_t\rangle,
\end{equation*}
where we used unbiasedness of the gradient.
Therefore, by Young's inequality
\begin{gather*}
    \E [\alpha_t\la\g(x_t)-g_t,u_{t+1}-u_t\ra -\tfrac{\alpha_t\eta_t}{2}\norm{u_{t+1}-u_t}^2] 
    \leq\tfrac{\alpha_t}{2\eta_t}\E\norm{g_t-\g(x_t)}^2
    \leq\tfrac{\sigma_1^2 \alpha_t}{2\eta_t}.
\end{gather*}
Taking expectations of the remaining terms and
using~\eqref{eq:stoch_hess}, we obtain
\begin{equation*}
  \begin{aligned}
  \E\Delta_{t+1}
  \leq{} &(1-\alpha_t)\E\Delta_t+\tfrac{\alpha_t\sigma_1^2}{2\eta_t}
  +\alpha_t^2\sigma_2D^2 + \tfrac{\max\lb M,L_2\rb}{3}\alpha_t^3D^3\\
  &+\tfrac{\alpha_t\eta_t}{2}\ls
    \E\norm{u_t-x^*}^2
    -\E\norm{u_{t+1}-x^*}^2
  \rs.
  \end{aligned}
\end{equation*}

Compared to Contracting-Domain Newton method~\eqref{eq:nikita_cn}, the term
$\alpha_t\sigma_1D$ is replaced by
$\alpha_t\sigma_1^2/(2\eta_t)$ and a difference of squared
distances. In particular,
  \begin{equation*}
  \alpha_t\sigma_1D
  \simeq \frac{\sigma_1D}{t},
  \qquad
  \frac{\alpha_t\sigma_1^2}{2\eta_t}
  \simeq \frac{\sigma_1D}{t^{3/2}}.
  \end{equation*}
The remaining difference of squared distances is controlled by weighted telescoping.

To obtain the final convergence rate, it remains to multiply the inequality above by
$(t+1)(t+2)(t+3)$ and sum over $t=0,\ldots,T-1$.
The regularization parameter $\eta_t$ controls the balance between the two accumulated contributions, yielding the desired
$\cO(\sigma_1D/\sqrt{T})$ dependence. 

We formalize this result in the following theorem, with the full proof given in Appendix~\ref{sec:proof_basic}.
\begin{theorem}
  \label{thm:basic_convergence}
  Let $f$ be convex. Let Assumptions~\ref{as:lip_hess},~\ref{as:stoch_grad},~\ref{as:unbiased_grad}, and~\ref{as:stoch_hess} hold. Then Algorithm~\ref{alg:basic_method} with $M\geq0$ and parameters~\eqref{eq:basic_reg} satisfies, for every $T\geq1$,
  \begin{equation*}
    \E\lp f(x_T)-f^*\rp
    =\cO\ls\tfrac{\sigma_1D}{\sqrt T} +\tfrac{\sigma_2D^2}{T} + \tfrac{\max\lb M,L_2\rb D^3}{T^2}\rs.
  \end{equation*}
\end{theorem}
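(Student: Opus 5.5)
We start from the one-step inequality of Lemma~\ref{lem:basic_one_step_main} and take expectations. The delicate term is the gradient error $\alpha_t\la\g(x_t)-g_t,u_{t+1}-x^*\ra$. Since $u_t$ and $x^*$ are determined before $g_t$ is sampled, Assumption~\ref{as:unbiased_grad} gives $\E\la\g(x_t)-g_t,u_t-x^*\ra=0$. Hence the term equals $\alpha_t\E\la\g(x_t)-g_t,u_{t+1}-u_t\ra$. Young's inequality against the negative term $-\tfrac{\alpha_t\eta_t}{2}\norm{u_{t+1}-u_t}^2$, together with~\eqref{eq:stoch_grad}, bounds it by $\alpha_t\sigma_1^2/(2\eta_t)$. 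The Hessian term is bounded by Jensen, $\E\norm{H_t-\h(x_t)}\le\sigma_2$. This gives the expected recursion displayed just before the theorem. We write it with $r_t:=\E\norm{u_t-x^*}^2\le D^2$, which holds because all $u_t\in\XCal$.

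Next, set $w_t=(t+1)(t+2)(t+3)$. With $\alpha_t=3/(t+3)$ we have $w_t(1-\alpha_t)=t(t+1)(t+2)=w_{t-1}$ and $w_{-1}=0$. Multiplying the recursion by $w_t$ and summing over $t=0,\dots,T-1$ telescopes the $\Delta$ terms:
\begin{equation*}
w_{T-1}\E\Delta_T\le\sum_{t=0}^{T-1}\Bigl[\tfrac{w_t\alpha_t\sigma_1^2}{2\eta_t}+w_t\alpha_t^2\sigma_2D^2+\tfrac{\max\lb M,L_2\rb}{3}w_t\alpha_t^3D^3\Bigr]+\sum_{t=0}^{T-1}\tfrac{w_t\alpha_t\eta_t}{2}(r_t-r_{t+1}).
\end{equation*}
We use $w_t\alpha_t=3(t+1)(t+2)$, $w_t\alpha_t^2\le 9(t+1)$ and $w_t\alpha_t^3\le 27$. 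The second and third groups then sum to $\cO(\sigma_2D^2T^2)$ and $\cO(\max\lb M,L_2\rb D^3T)$. With $\eta_t=\sigma_1\sqrt{t+1}/D$, the first group is $\sum_t \cO\bigl(\sigma_1D(t+1)^{3/2}\bigr)=\cO(\sigma_1DT^{5/2})$.

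The main obstacle is the weighted distance sum, because the weights $c_t:=\tfrac{w_t\alpha_t\eta_t}{2}=\tfrac{3\sigma_1}{2D}(t+1)^{3/2}(t+2)$ increase with $t$, so the sum does not telescope directly. We apply Abel summation:
\begin{equation*}
\sum_{t=0}^{T-1}c_t(r_t-r_{t+1})=c_0r_0+\sum_{t=1}^{T-1}(c_t-c_{t-1})r_t-c_{T-1}r_T\le c_0D^2+\sum_{t=1}^{T-1}(c_t-c_{t-1})D^2\le c_{T-1}D^2.
\end{equation*}
This uses $c_t\ge c_{t-1}$, $0\le r_t\le D^2$, and drops the nonpositive last term. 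It gives a bound of $\cO(\sigma_1 D T^{5/2})$, the same order as the variance contribution. This balance is exactly what the choice of $\eta_t$ achieves.

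Finally, we divide by $w_{T-1}=T(T+1)(T+2)\ge T^3$. This yields $\E\Delta_T=\cO\bigl(\sigma_1D/\sqrt T+\sigma_2D^2/T+\max\lb M,L_2\rb D^3/T^2\bigr)$. Two checks remain. First, the conditioning argument for the zero-mean cross term must be valid: $u_t$ and $x_t$ must be measurable with respect to the past, and the fresh sample $\xi_t$ must be independent of it. Second, the edge case $\sigma_1=0$ must be handled. There $\eta_t=0$, and we simply drop the Young step, since the gradient error vanishes.
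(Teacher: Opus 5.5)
Your proposal is correct and follows the paper's proof essentially step for step. It uses the same splitting $u_{t+1}-x^*=(u_{t+1}-u_t)+(u_t-x^*)$ with unbiasedness and Young's inequality, the same weights $(t+1)(t+2)(t+3)$ telescoping via $\alpha_0=1$, the same Abel-summation bound $T(T+1)\eta_{T-1}D^2$ for the weighted distance sum, and the same treatment of the case $\sigma_1=0$; the paper only additionally tracks explicit constants, obtaining $\tfrac{9\sigma_1D}{4\sqrt T}+\tfrac{9\sigma_2D^2}{2(T+2)}+\tfrac{9\max\{M,L_2\}D^3}{(T+1)(T+2)}$.
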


\subsection{First-order Reduction}
Setting $H_t=0$ and $M=0$ reduces Algorithm~\ref{alg:basic_method} to projected stochastic gradient descent with averaging
\begin{equation}
  \label{eq:FO_reduction}
  \begin{gathered}
    u_{t+1}
    =
    \Pi_{\XCal}\left(u_t-\tfrac{1}{\eta_t}g_t\right),\\
    x_{t+1}
    =
    (1-\alpha_t)x_t+\alpha_tu_{t+1}.
  \end{gathered}
\end{equation}
Therefore, additional quadratic regularizer induces a projected
first-order step with stepsize $1/\eta_t$, while contraction reduces to iterate averaging only. The stochastic gradient is
evaluated at the averaged point $x_t$. The same first-order scheme, with the same parameter dependence
on $t$ up to numerical constants, is studied by~\citet[(18), Theorem~2]{defazio2021power}.

In contrast, setting $H_t=0$ in the classical Contracting-Domain Newton method~\eqref{eq:nikita_cn} of~\citet{doikov2020convex} reduces it to the naive stochastic Frank-Wolfe update, which may fail to converge under nonvanishing gradient noise~\citep{mokhtari2020stochastic}.

Let us further assume that $f$ has $L_1$-Lipschitz gradient
\begin{equation}
  \label{eq:lip_grad}
  \norm{\g(x)-\g(y)} \leq L_1\norm{x-y},
  \quad x,y\in\XCal.
\end{equation}

A direct application of
Theorem~\ref{thm:basic_convergence} with $H_t=0$ gives $\cO\ls\tfrac{\sigma_1D}{\sqrt T} +\tfrac{L_1 D^2}{T} + \tfrac{L_2 D^3}{T^2}\rs$ rate for the scheme~\eqref{eq:FO_reduction} since
$\|\nabla^2f(x)\|\le L_1$. However, the last term is an artifact of using a
second-order Taylor approximation in
Lemma~\ref{lem:basic_one_step_main}.
For the first-order reduction, the standard smoothness
inequality is sufficient.

\begin{proposition}
  \label{prop:first_order_reduction}
Let $f$ be convex and satisfy~\eqref{eq:lip_grad}. Let Assumptions~~\ref{as:stoch_grad} and ~\ref{as:unbiased_grad} hold. 
Then Algorithm~\ref{alg:basic_method} with
$H_t=0$, $M=0$, and parameters~\eqref{eq:basic_reg}
satisfies, for every integer $T\geq1$,
\begin{equation*}
  \E[f(x_T)-f^*] 
  =
  \cO\ls
  \tfrac{\sigma_1D}{\sqrt T}
  +\tfrac{L_1D^2}{T}
  \rs.
\end{equation*}
\end{proposition}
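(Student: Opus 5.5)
We argue as for Theorem~\ref{thm:basic_convergence}, replacing the second-order Taylor bound by the $L_1$-smoothness inequality $f(y)\le f(x)+\la\g(x),y-x\ra+\tfrac{L_1}{2}\norm{y-x}^2$. This changes the inexactness term from $\alpha_t^2D^2\norm{H_t-\h(x_t)}$ and $\tfrac{L_2}{3}\alpha_t^3D^3$ to a single $\tfrac{L_1}{2}\alpha_t^2D^2$ term.

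\textbf{One-step bound.} Since $x_{t+1}-x_t=\alpha_t(u_{t+1}-x_t)$ and $\norm{u_{t+1}-x_t}\le D$, smoothness gives
\begin{equation*}
f(x_{t+1})\le f(x_t)+\alpha_t\la\g(x_t),u_{t+1}-x_t\ra+\tfrac{L_1}{2}\alpha_t^2D^2 .
\end{equation*}
Split $\g(x_t)=g_t+(\g(x_t)-g_t)$. With $H_t=0$, $M=0$, the model is $\la g_t,u-x_t\ra+\tfrac{\eta_t}{2}\norm{u-u_t}^2$, so $u_{t+1}$ is the projected step~\eqref{eq:FO_reduction}. Its optimality condition, in three-point form, gives
\begin{equation*}
\la g_t,u_{t+1}-x^*\ra\le\tfrac{\eta_t}{2}\ls\norm{u_t-x^*}^2-\norm{u_{t+1}-x^*}^2-\norm{u_{t+1}-u_t}^2\rs.
\end{equation*}
Convexity gives $\la\g(x_t),x^*-x_t\ra\le -\Delta_t$. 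Combining these yields
\begin{equation*}
\Delta_{t+1}\le(1-\alpha_t)\Delta_t+\tfrac{\alpha_t\eta_t}{2}\ls\norm{u_t-x^*}^2-\norm{u_{t+1}-x^*}^2\rs-\tfrac{\alpha_t\eta_t}{2}\norm{u_{t+1}-u_t}^2+\alpha_t\la\g(x_t)-g_t,u_{t+1}-x^*\ra+\tfrac{L_1}{2}\alpha_t^2D^2 .
\end{equation*}

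\textbf{Taking expectations.} By unbiasedness, and since $u_t$ and $x^*$ are measurable with respect to the past, the noise inner product equals $\E\la\g(x_t)-g_t,u_{t+1}-u_t\ra$. Young's inequality cancels it against the $-\tfrac{\alpha_t\eta_t}{2}\norm{u_{t+1}-u_t}^2$ term, up to $\tfrac{\alpha_t\sigma_1^2}{2\eta_t}$. Writing $r_t=\E\norm{u_t-x^*}^2\le D^2$, we obtain
\begin{equation*}
\E\Delta_{t+1}\le(1-\alpha_t)\E\Delta_t+\tfrac{\alpha_t\sigma_1^2}{2\eta_t}+\tfrac{L_1}{2}\alpha_t^2D^2+\tfrac{\alpha_t\eta_t}{2}(r_t-r_{t+1}).
\end{equation*}

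\textbf{Weighted telescoping.} This is the main bookkeeping step. With $\alpha_t=3/(t+3)$, set $A_t=t(t+1)(t+2)$. Then $A_{t+1}(1-\alpha_t)=A_t$, so multiplying by $A_{t+1}$ (equivalently $(t+1)(t+2)(t+3)$) and summing telescopes the $\Delta$ terms. Since $A_0=0$, the initial gap drops out. The error terms are bounded as follows.
\begin{itemize}
\item Noise term: $A_{t+1}\alpha_t\sigma_1^2/\eta_t=3(t+1)(t+2)\sigma_1D/\sqrt{t+1}$. Summing gives $\cO(\sigma_1DT^{5/2})$.
\item Smoothness term: $A_{t+1}\alpha_t^2L_1D^2=\cO(tL_1D^2)$. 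Summing gives $\cO(L_1D^2T^2)$.
\item Distance term: with $c_t=A_{t+1}\alpha_t\eta_t/2=\tfrac32(t+1)^{3/2}(t+2)\sigma_1/D$, which is nondecreasing, Abel summation gives
\begin{equation*}
\sum_{t=0}^{T-1}c_t(r_t-r_{t+1})\le c_0r_0+\sum_{t=1}^{T-1}(c_t-c_{t-1})r_t\le c_{T-1}D^2=\cO(\sigma_1DT^{5/2}).
\end{equation*}
\end{itemize}
Dividing by $A_T=\Theta(T^3)$ gives $\E\Delta_T=\cO(\sigma_1D/\sqrt T+L_1D^2/T)$.

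The only care needed is measurability in the expectation step: $u_t$ must be determined before $g_t$ is sampled, which holds because the sample $\xi$ is drawn independently of the past at each iteration.
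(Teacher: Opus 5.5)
Your proposal is correct and follows the paper's proof essentially step for step. You replace the Taylor bound by the $L_1$-descent inequality to get the one-step bound with a single $\tfrac{L_1}{2}\alpha_t^2D^2$ term, then reuse the unbiasedness-plus-Young cancellation and the $(t+1)(t+2)(t+3)$-weighted telescoping with Abel summation of the distance terms.

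The only omission is the degenerate case $\sigma_1=0$. There $\eta_t=0$, so $\tfrac{\alpha_t\sigma_1^2}{2\eta_t}$ and the projected-step form are undefined. The paper handles it separately: $g_t=\nabla f(x_t)$ almost surely, the noise and regularization terms vanish, and the same summation gives the $L_1D^2/T$ bound.
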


The proof is given in Appendix~\ref{sec:proof_first_order}.

\section{Matching the Lower Bound}
In this section, we develop an accelerated method, which matches the lower bound rate of~\citet{agafonov2024advancing}. Our Accelerated Stochastic Contracting Cubic Newton method is listed as Algorithm~\ref{alg:optimal_method}. It combines the ideas from Algorithm~\ref{alg:basic_method} with damping optinal acceleration of~\citet{carmon2022optimal}.

\begin{algorithm}[H]
  \caption{Accelerated Stochastic Contracting Cubic Newton}
  \label{alg:optimal_method}
  \begin{algorithmic}[1]
    \STATE \textbf{Input:} $w_0=z_0=x_0\in\XCal$, $A_0=0$;
    $M>0$, $M\geq L_2$; parameters~\eqref{eq:acc_params}.
    \FOR{$t=1,2,\ldots$}
      \STATE Set $v_t=(1-\alpha_t)w_{t-1}+\alpha_tz_{t-1}$.
      \STATE Sample $\xi_t$, query $g_t=g(v_t,\xi_t)$, 
      $H_t=H(v_t,\xi_t)$. \label{lin:deriv}
      \STATE Compute $u_t=\argmin_{u\in\XCal}\Omega_t(u)$.
      \label{lin:cubic_sub_acc} \label{lin:opt_model_min}
      \STATE Set $y_t=(1-\alpha_t)w_{t-1}+\alpha_tu_t$.
      \label{lin:contrction_acc} \label{lin:opt_contract}
      \STATE Set $\tau_t=\beta_t+\lambda_t\norm{y_t-v_t}$.
      \label{lin:real_tau}
      \STATE Set $a_t,A_t$ according to~\eqref{eq:A_relation_main}.
      \STATE Sample $\widetilde\xi_t$, query
      $\widetilde g_t=g(y_t,\widetilde\xi_t)$.  \label{lin:unbiased_grad}
      \STATE Set $w_t=\tfrac{A_{t-1}w_{t-1}+a_tu_t}{A_t}$.
      \STATE Compute \label{lin:avg_sub_acc}
      \begin{equation*}
        z_t=\argmin_{z\in\XCal}\left\{\la\widetilde g_t,z\ra
        +\tfrac{\lambda_t}{2a_t}\norm{z-z_{t-1}}^2\right\}.
      \end{equation*}
    \ENDFOR
  \end{algorithmic}
\end{algorithm}

\subsection{The Method}
Given stochastic derivatives $g_t = g(v_t,\xi_t)$ and $H_t = H(v_t,\xi_t)$, we define a cubic model for accelerated method
\begin{equation*}
  \begin{aligned}
    \Omega_t(u) = \la g_t, u - z_{t-1} \ra + \tfrac{\alpha_t}{2} \la H_t (u - z_{t-1}), u - z_{t-1} \ra 
    + \tfrac{\alpha_t \beta_t}{2} \|u - z_{t-1}\|^2 + \tfrac{\lambda_t \alpha_t^2}{3}\|u - z_{t-1}\|^3,
  \end{aligned}
\end{equation*}
where for $M \geq L_2$
\begin{equation}
  \label{eq:acc_params}
  \lambda_t=\tfrac{\sigma_1t^3}{D^2}+\tfrac{\sigma_2t^{3/2}}{D}+2M,~~\alpha_t = \tfrac{4}{t+3},~~\beta_t = \tfrac{\lambda_t D}{t^{3/2}}.
\end{equation}
Lines~\ref{lin:opt_model_min}-\ref{lin:opt_contract} of Algorithm~\ref{alg:optimal_method} perform the contracted cubic Newton step. This construction can equivalently be written as a local model minimized over a contracted domain. Changing variables from $u$ to $y = (1 - \alpha_t) w_{t-1} + \alpha_t u$, we have $\alpha_t (u - z_{t-1}) = y - v_t$. Therefore, the contracting step is equivalent to
\begin{equation*}
  \begin{aligned}
    y_t = \argmin_{y \in \YCal_t} \left \{ \la g_t, y - v_t \ra + \tfrac{1}{2} \la H_t (y - v_t), y - v_t \ra  + \tfrac{\beta_t}{2} \|y - v_t\|^2 + \tfrac{\lambda_t}{3}\|y - v_t\|^3
    \right\},
  \end{aligned}
\end{equation*}
where $\YCal_t = (1 - \alpha_t) w_{t-1} + \alpha_t \XCal$.
In contrast to the non-accelerated Algorithm~\ref{alg:basic_method}, quadratic and cubic regularizers are centered at $v_t$, the same point at which stochastic derivatives are evaluated. 

This makes our subproblem similar to the one used in the accelerated stochastic cubic Newton method of~\citet{agafonov2024advancing}, but restricted to $\YCal_t$. In particular, their quadratic regularization scales as $\sigma_2 + \sigma_1 t^{3/2} / R$, which matches the noise-dependent part of our $\beta_t$~\eqref{eq:acc_params}.

To achieve optimal acceleration we apply the damping mechanism of~\citet{carmon2022optimal}.
With $s_t=\alpha_t(u_t-z_{t-1})$, the optimality condition for $u_t$ gives
\begin{align*}
  0 \leq{} 
  \la g_{t} + \ls H_{t} + (\beta_t + \lambda_t \|s_{t}\| )  I\rs s_{t},x-u_{t}\ra 
   ={} \la g_{t} + \ls H_{t} + \tau_t I\rs s_{t},x-u_{t}\ra, \qquad x \in \XCal.
\end{align*}
Thus, $\tau_t$ is the effective regularization coefficient associated with the computed step. Since $\tau_t$ depends on the unknown step length $\|s_t\|$, it can only be determined after solving the subproblem. Following~\citet{carmon2022optimal}, we scale down the growth of parameter $A_t$ if $\tau_t$ is too large. Specifically, we set $a_1 = \lambda_1/\tau_1$ and 
\begin{equation}
  \label{eq:A_relation_main}
  a_t= \min\lb\frac{\alpha_t A_{t-1}}{1-\alpha_t},
            \frac{\lambda_t}{\alpha_t\tau_t}\rb, \quad A_t = A_{t-1} + a_t,
\end{equation}
for $t\geq2$. Importantly, this choice ensures
\begin{equation}
   \label{eq:damping_condition}
  a_t \alpha_t \tau_t \leq \lambda_t.
\end{equation}
If the minimum in~\eqref{eq:A_relation_main}  is attained on the first term in~\eqref{eq:A_relation_main}, we perform the full update, resulting in $A_t=\frac{A_{t-1}}{1-\alpha_t}$ and $w_t = y_t$. Otherwise, the second term is active, so $a_t$ is reduced relative to full update and $w_t$ is a convex combination of $w_{t-1}$ and $y_t$. We call such an iteration damped.

\subsection{Convergence Analysis}

The following theorem characterizes its global convergence.
\begin{theorem}
  \label{thm:optimal_convergence}
  Let $f$ be convex and satisfy Assumption~\ref{as:lip_hess}. Let $g_t$ and $H_t$ in line~\ref{lin:deriv} satisfy Assumptions~\ref{as:stoch_grad} and~\ref{as:stoch_hess}, respectively. Let stochastic gradient $\widetilde{g}_t$ in line~\ref{lin:unbiased_grad} satisfy Assumptions~\ref{as:stoch_grad},~\ref{as:unbiased_grad}. Then Algorithm~\ref{alg:optimal_method} with $M\geq L_2$ and parameters~\eqref{eq:acc_params} satisfies, for every $T\geq1$,
  \begin{equation*}
    \E\lp f(w_T)-f^*\rp
    =\cO\ls\frac{\sigma_1D}{\sqrt T}
    +\frac{\sigma_2D^2}{T^2}+\frac{MD^3}{T^{7/2}}\rs.
  \end{equation*}
\end{theorem}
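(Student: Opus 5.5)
I will prove the bound by a potential-function argument in the style of the Monteiro--Svaiter / \citet{carmon2022optimal} analysis, with the standard estimate-sequence potential
\[
\Psi_t = A_t\bigl(f(w_t)-f^*\bigr) + \tfrac{\lambda_t}{2}\norm{z_t-x^*}^2 .
\]
The goal is a one-step inequality
\[
\E\Psi_t \le \E\Psi_{t-1} + \tfrac{\lambda_t-\lambda_{t-1}}{2}D^2 + \text{(noise terms)}.
\]
Since $\lambda_t$ is nondecreasing and $\norm{z_{t-1}-x^*}\le D$, the growth of $\lambda_t$ costs at most $\lambda_T D^2/2$ in total. Dividing by $A_T$ then gives the rate, provided $A_T\gtrsim T^{7/2}/\lambda$-type growth is established. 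Concretely, I aim for
\[
\E[f(w_T)-f^*] \lesssim \frac{\lambda_T D^2 + \sum_t(\text{noise})}{\E A_T}
\]
and must show $A_T \gtrsim T^{3}\cdot T^{1/2}/\lambda_T$ in the appropriate averaged sense.

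\textbf{Step 1 (one-step inequality).} I would use convexity at $x^*$ and the definitions $w_t=(A_{t-1}w_{t-1}+a_tu_t)/A_t$ and $y_t=(1-\alpha_t)w_{t-1}+\alpha_tu_t$. Writing $A_tf(w_t)\le A_{t-1}f(w_{t-1})+a_tf(\cdot)$ is not directly available, since $w_t\neq y_t$ in damped steps. Following Carmon et al., I would instead use $w_t$ as a convex combination of $w_{t-1}$ and $y_t$ (valid because $a_t\le\alpha_tA_{t-1}/(1-\alpha_t)$) to get $A_tf(w_t)\le (A_t-a_t/\alpha_t)f(w_{t-1})+(a_t/\alpha_t)f(y_t)$. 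Then:
\begin{itemize}
\item For $f(y_t)$, apply the Taylor bound of Lemma~\ref{lem:taylor_remainder} around $v_t$ together with $M\ge L_2$, the optimality condition for $u_t$ (with $\tau_t$), and $\alpha_t(u_t-z_{t-1})=y_t-v_t$.
\item The $z$-update is a projected SGD step with stepsize $a_t/\lambda_t$ using the unbiased $\widetilde g_t$ at $y_t$. Its three-point inequality gives $a_t\la\widetilde g_t,z_{t-1}-x^*\ra\le \tfrac{\lambda_t}{2}(\norm{z_{t-1}-x^*}^2-\norm{z_t-x^*}^2)+\tfrac{a_t^2}{2\lambda_t}\norm{\widetilde g_t}^2$-type terms.
\item Replacing $\widetilde g_t$ by $\g(y_t)$ costs zero in expectation for the part paired with $z_{t-1}-x^*$, which is measurable before $\widetilde\xi_t$, and costs $a_t^2\sigma_1^2/\lambda_t$ for the rest.
\item The gradient-linearization error $\norm{\g(y_t)-g_t-H_t(y_t-v_t)}\le\sigma_1+\sigma_2\norm{s_t}+\tfrac{L_2}{2}\norm{s_t}^2$ is absorbed by the regularizers $\beta_t\norm{s_t}^2$ and $\lambda_t\norm{s_t}^3$ via Young's inequality. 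This leaves residuals of order $a_t\sigma_1^2/(\alpha_t\beta_t)$ and $a_t\sigma_2^2/(\alpha_t\lambda_t)$-type terms, times $\norm{s_t}$ factors.
\item The damping condition \eqref{eq:damping_condition}, $a_t\alpha_t\tau_t\le\lambda_t$, is what lets the inner product $\la \g(y_t),u_t-x^*\ra$ be bounded in terms of $\norm{z}$-distances, exactly as in the Carmon et al.\ MS analysis.
\end{itemize}

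\textbf{Step 2 (noise accounting).} Plugging in \eqref{eq:acc_params}, $\beta_t=\lambda_tD/t^{3/2}$ and $\lambda_t\ge\sigma_1t^3/D^2$, I check that the accumulated noise terms sum to $\cO(\sigma_1DT^{3})$ and $\cO(\sigma_2D^2T^{3/2})$, matching $\lambda_TD^2\approx \sigma_1T^3+\sigma_2DT^{3/2}+MD^2$.

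\textbf{Step 3 (growth of $A_T$) — the main obstacle.} In undamped steps $A_t=A_{t-1}/(1-\alpha_t)$, which gives $A_t\propto t^4$-ish growth. Damped steps instead give $a_t=\lambda_t/(\alpha_t\tau_t)$ with $\tau_t=\beta_t+\lambda_t\norm{s_t}$. I would bound $\sum_t a_t\lambda_t\norm{s_t}^2$ (or a cubic analogue) by the potential through the negative terms retained in Step 1, which are of the form $-\tfrac{a_t\lambda_t}{\alpha_t}\norm{s_t}^3$. I would then use a reverse-H\"older argument as in Carmon et al.\ to conclude $A_T\gtrsim T^{7/2}/\lambda_T\cdot$(scaling). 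The $\beta_t$ part of $\tau_t$ caps $a_t\le 1/(\alpha_t\cdot D t^{-3/2})\sim t^{5/2}/D$, consistent with the target.

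Since $A_T$ is random, I expect the hardest point to be making the division by $A_T$ legitimate in expectation. I would first try to obtain a deterministic lower bound on $A_T$ that holds whenever the potential is bounded. If that fails, I would prove the bound on $\E[A_T(f(w_T)-f^*)]$ and control the damped regime separately, using $f(w_T)-f^*\le$ a bound that follows from bounded $\XCal$.

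Finally, combining the three steps gives
\[
\E[f(w_T)-f^*]\lesssim \frac{\lambda_TD^2}{T^{7/2}\cdot\lambda_T/\lambda_T}\sim \frac{\sigma_1D}{\sqrt T}+\frac{\sigma_2D^2}{T^2}+\frac{MD^3}{T^{7/2}}.
\]
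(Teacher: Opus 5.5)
Your potential, the convex-combination step for $w_t$, the three-point inequality for the $z$-update, the Taylor bound at $v_t$, the damping inequality $a_t\alpha_t\tau_t\le\lambda_t$, and the retained squared-step term all match the paper's one-step lemma. The gap is the step you flag as the main obstacle, and your concluding inequality $\E[f(w_T)-f^*]\lesssim(\lambda_TD^2+\sum_t\text{noise})/\E A_T$ is not valid. $A_T$ is built from $\tau_t$ and $\norm{s_t}$, i.e., from the same oracle noise that drives the potential. So $\E[E_T/A_T]$ is bounded neither by $\E E_T/\E A_T$ nor by $\E E_T\cdot\E A_T^{-1}$.

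Your fallbacks do not close this. The oracle errors are unbounded, so the movement sum is controlled only in expectation, and there is no deterministic lower bound on $A_T$ of the right order. The almost-sure bound $\norm{s_t}\le\alpha_tD$ alone gives only $A_T^{-1}=\cO(D/T^3)$, hence $\E[f(w_T)-f^*]=\cO(\lambda_TD^3/T^3)$, whose gradient-noise part is $\sigma_1D$ with no decay in $T$. For the same reason, a reverse-H\"older argument in the style of Carmon et al. does not transfer directly, since it needs the movement sum bounded pathwise. The second fallback introduces $\max_{x\in\XCal}f(x)-f^*$, which is not among the constants of the claimed bound. It would also still need a tail estimate for the event that $A_T$ is small.

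The paper's route is $\E[E_TA_T^{-1}]\le(\E E_T^2)^{1/2}(\E A_T^{-2})^{1/2}$, which needs two ingredients absent from your plan.

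(i) A second-moment bound $(\E E_T^2)^{1/2}\lesssim\lambda_TD^2$. Only second moments of the oracle errors are assumed; they are not pointwise bounded by $\sigma_1,\sigma_2$, as your Step 1 suggests. So residuals quadratic in the noise, such as $a_t^2\norm{\widetilde g_t-\nabla f(y_t)}^2/\lambda_t$, cannot be squared. The paper therefore keeps each error term as the minimum of the Young bound and a linear bound obtained from $\norm{z_t-z_{t-1}}\le D$ and $\norm{u_t-z_t}\le D$. It also keeps $\ell_t=-a_t\langle\widetilde g_t-\nabla f(y_t),z_{t-1}-x^*\rangle$ linear, so its cross terms vanish as martingale differences.

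(ii) Instead of a lower bound on $A_T$, an upper bound on $A_T^{-1}$ that is \emph{linear} in the step lengths. On damped steps $a_t=1/\bigl(\alpha_t(Dt^{-3/2}+\norm{s_t})\bigr)$, and convexity of $x\mapsto1/x$ gives $A_t^{-1}\le(1-\alpha_t)A_{t-1}^{-1}+\alpha_t^3(Dt^{-3/2}+\norm{s_t})$. Hence $A_T^{-1}\lesssim D/T^{7/2}+T^{-4}\sum_{\text{damped}}\norm{s_t}/\alpha_t$. On damped steps, $V_t:=a_t\tau_t\norm{s_t}^2/(\alpha_t\lambda_t)$ equals $\norm{s_t}^2/\alpha_t^2$, i.e., the retained term divided by $\lambda_t$. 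Divide the one-step inequality by $\lambda_t$ and telescope with weights $\sqrt t$, using $\E E_t\lesssim\lambda_tD^2$; this gives $\E\sum_t\sqrt t\,V_t\lesssim D^2\sqrt T$. Cauchy--Schwarz then gives $\E(\sum_t\sqrt{V_t})^2\lesssim D^2T$, hence $(\E A_T^{-2})^{1/2}\lesssim D/T^{7/2}$.

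Note that the correct target for $A_T$ scales with $D$, not with $\lambda_T$ as in your Step 3. The product $\lambda_TD^2\cdot D/T^{7/2}$ is exactly the claimed rate.
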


\textbf{Proof sketch.} We use the potential
\begin{equation*}
  E_t = A_t(f(w_t)-f^*)+\frac{\lambda_t}{2}\norm{z_t-x^*}^2.
\end{equation*}
Taking expectation and using H\"older inequality gives
\begin{equation*}
  \E \lp f(w_T) - f^\ast \rp \leq \E \lp E_T A_T^{-1} \rp \leq \ls \E E_T^2 \rs^{1/2} \ls \E A_T^{-2} \rs^{1/2}. 
\end{equation*}
The damping rule makes $A_T$ depend on the stochastic
derivatives. In the proof, we show that the following bounds hold
\begin{equation*}
  \ls\E E_T^2\rs^{1/2}\lesssim\lambda_TD^2,
  \qquad
  \ls\E A_T^{-2}\rs^{1/2}\lesssim\tfrac{D}{T^{7/2}}.
\end{equation*}
They give the rate of Theorem~\ref{thm:optimal_convergence}.

Using the damping condition~\eqref{eq:damping_condition},
we combine the optimality conditions of the two subproblems
in lines~\ref{lin:cubic_sub_acc} and~\ref{lin:avg_sub_acc}
with convexity to obtain
\begin{equation}
  \label{eq:potential_one_step_main}
  E_t+\tfrac{a_t\tau_t}{8\alpha_t}\norm{s_t}^2
  \leq E_{t-1}+\tfrac{D^2}{2}(\lambda_t-\lambda_{t-1})+\delta_t+\ell_t,
\end{equation} 
where $s_t=y_t-v_t$. 
The last two terms on the right are due to the stochastic oracle errors. As in the analysis of the basic method, for the $\widetilde{g}$'s error we use the decomposition
\begin{equation*}
    z_t-x^*=(z_t-z_{t-1})+(z_{t-1}-x^*).
\end{equation*}
The term involving $z_t-z_{t-1}$ is bounded using Young's inequality
and the diameter bound, and included in $\delta_t$, together with
the errors of $g_t$ and $H_t$. The remaining term
\begin{equation*}
    \ell_t
    =
    -a_t\la
    \widetilde g_t-\g(y_t),
    z_{t-1}-x^*
    \ra
\end{equation*}
is kept linear. Since $a_t$, $y_t$, and $z_{t-1}$ are fixed before the gradient $\widetilde g_t$ is queried, unbiasedness gives $\E\ell_t=0$.
Summing~\eqref{eq:potential_one_step_main}, we get
\begin{equation}
  \label{eq:potential_telescope_main}
  E_T + \frac{1}{8} \tsum_{t=1}^T \frac{a_t \tau_t}{\alpha_t}\|s_t\|^2 \leq \frac{\lambda_T D^2}{2} + \tsum_{t=1}^T \delta_t + \tsum_{t=1}^T \ell_t.
\end{equation}
Thus, to bound $\ls \E E_T^2 \rs^{1/2}$, we need to estimate second moments of the sums of $\delta$'s and $\ell$'s. Since $\ell_t$ has zero mean conditionally on the past, the cross terms vanish and
\begin{equation*}
  \E \ls \tsum_{t=1}^T \ell_t \rs^2 = \tsum_{t=1}^T \E \ell_t^2 \leq \tsum_{t=1}^T  D^2 \sigma_1^2 \E a_t^2\lesssim \sigma_1^2 T^6 \stackrel{\eqref{eq:acc_params}}{\lesssim} \lambda_T^2 D^4,
\end{equation*}
where $\tau_t \geq \beta_t$ and~\eqref{eq:damping_condition} give
$a_t\leq t^{3/2}/(\alpha_tD)$. The contraction bound $\norm{s_t}\leq\alpha_tD$
and the regularization parameters~\eqref{eq:acc_params}
also yield a second-moment bound of order
$\lambda_T^2D^4$ for $\tsum_{t=1}^T\delta_t$ (see Lemma~\ref{lem:optimal_error_moments}). Consequently, 
\begin{equation}
  \label{eq:potential_rate}
  \ls \E  E_T^2 \rs ^{1/2} \lesssim \lambda_T D^2.
\end{equation} 
It remains to show the bound on $\ls \E A_T^{-2} \rs^{1/2}$.  By~\eqref{eq:A_relation_main}, there are two types of iterations. If all iterations $t=2,\ldots,T$ are undamped, then
\begin{equation*}
  A_T^{-1} = A_1^{-1}\tprod_{t=2}^T (1-\alpha_t) \stackrel{\eqref{eq:acc_params}}{\lesssim} \frac{D}{T^4}.
\end{equation*}
On damped iterations, $a_t=\lambda_t/(\alpha_t\tau_t)$, so
\begin{equation*}
  A_t^{-1}\leq(1-\alpha_t)A_{t-1}^{-1}+\alpha_t^3\ls\tfrac{D}{t^{3/2}}+\norm{s_t}\rs,
\end{equation*}
while at undamped iterations the last term is absent. Multiplying by $t(t+1)(t+2)(t+3)$ and summing gives
\begin{equation*}
  A_T^{-1}\lesssim\frac{D}{T^{7/2}}
  +\frac{1}{T^4}\sum_{\substack{t\leq T\\a_t<\alpha_tA_t}}\frac{\norm{s_t}}{\alpha_t}.
\end{equation*}
On damped iterations,~\eqref{eq:damping_condition}
holds with equality, so the squared-step term
in~\eqref{eq:potential_one_step_main} is
$\lambda_t\norm{s_t}^2/(8\alpha_t^2)$.
A weighted summation of the normalized one-step bound
controls the second moment of the sum above by
$\cO(D^2T)$  (see Lemma~\ref{lem:optimal_movement}).
Substituting this estimate and using Minkowski inequality yields
\begin{equation*}
  \ls\E A_T^{-2}\rs^{1/2}
  \lesssim\frac{D}{T^{7/2}}.
\end{equation*}
Combining this bound with~\eqref{eq:potential_rate} and using the definition of $\lambda_t$ yields the desired rate. 

The full proof is given in Appendix~\ref{sec:proof_optimal}.

\section{Batch sizes and parallel efficiency}

Let us compare this rate with the optimal stochastic first-order method AC-SA~\cite{lan2012optimal}, which achieves
\begin{equation*}
    \E\big[f(w_T)-f^*\big] = \cO\Big(\tfrac{\sigma_1 D}{\sqrt{T}} + \tfrac{L_1 D^2}{T^2}\Big).
\end{equation*}
 In this section, for simplicity, we assume that the stochastic gradient term dominates other terms in all complexity bounds. The stochastic term $\frac{\sigma_1 D}{\sqrt{T}}$ is the same in both bounds.  Hence, both methods need the same total number of stochastic gradients, $N = \cO\big(\frac{\sigma_1^2 D^2}{\varepsilon^2}\big)$, to reach accuracy $\varepsilon$. This is the optimal stochastic gradient complexity. However, the two methods differ in how this budget can be split between the number of iterations $T$ and the batch size $b$.

With mini-batches of size $b$, the stochastic term becomes $\frac{\sigma_1 D}{\sqrt{bT}}$. So the total number of stochastic gradients $N = bT$ stays the same, while the number of iterations is determined by the deterministic terms. For AC-SA, this number is $T = \cO\big((L_1 D^2/\varepsilon)^{1/2}\big)$. For our method, it is
$T = \Big(\frac{M D^3}{\varepsilon}\Big)^{2/7},$
when the Hessian estimates are accurate enough, namely, when
\begin{equation}
    \label{eq:sigma2_requirement}
    \sigma_2 \leq \Big(\tfrac{M^4 \varepsilon^3}{D^2}\Big)^{1/7}.
\end{equation}
In this case, our method needs only $\cO(\varepsilon^{-2/7})$ iterations instead of $\cO(\varepsilon^{-1/2})$ for AC-SA. As a result, we can use a larger gradient batch size
\begin{equation*}
    b = \cO\Big(\tfrac{\sigma_1^2 D^2}{\varepsilon^2 T}\Big) = \cO\Big(\tfrac{\sigma_1^2 D^{8/7}}{M^{2/7}\varepsilon^{12/7}}\Big),
\end{equation*}
while keeping the same total number of stochastic gradients.

Condition \eqref{eq:sigma2_requirement} can be satisfied by Hessian mini-batching. Assume that the stochastic Hessians are unbiased and satisfy $\E\norm{H(x,\xi) - \nabla^2 f(x)}_F^2 \leq \sigma_H^2$. Let $\bar H$ be the average of $b_H$ independent samples. Then $\E\norm{\bar H - \nabla^2 f(x)}^2 \leq \sigma_H^2/b_H$, so $\sigma_2 \leq \sigma_H/\sqrt{b_H}$. Therefore, \eqref{eq:sigma2_requirement} holds for the Hessian batch size
\begin{equation*}
    b_H = \cO\Big(\tfrac{\sigma_H^2 D^{4/7}}{M^{8/7}\varepsilon^{6/7}}\Big).
\end{equation*}
This Hessian batch size grows as $\cO(\varepsilon^{-6/7})$, which is much slower than the gradient batch size, which grows as $\cO(\varepsilon^{-12/7})$. The total number of stochastic Hessians is $\cO(\varepsilon^{-8/7})$, which is much smaller than the total number of stochastic gradients, $\cO(\varepsilon^{-2})$.

Let us now discuss the parallel setting. In large clusters, the main bottleneck is usually communication/iterations, not computation. Therefore, we assume that the cost of one communication round dominates all additional computational costs, such as solving the subproblem. Under condition \eqref{eq:sigma2_requirement}, our method needs $T = \cO\big((MD^3/\varepsilon)^{2/7}\big)$ iterations, while AC-SA needs $T = \cO\big((L_1 D^2/\varepsilon)^{1/2}\big)$ iterations with the same total number of stochastic gradients. Therefore, our method requires fewer communication rounds, and it is faster than first-order methods in this setting. This observation opens the direction for a detailed distributed version of our method, which would also take into account the communication cost of the Hessian information, for example, by using $\beta$-similarity, Hessian-vector products, compression, or sketching.

\clearpage
\begin{figure}[!t]
  \centering
  \begin{minipage}{0.58\textwidth}
    \centering
    \includegraphics[width=\linewidth]{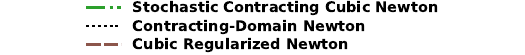}
    \includegraphics[width=\linewidth]{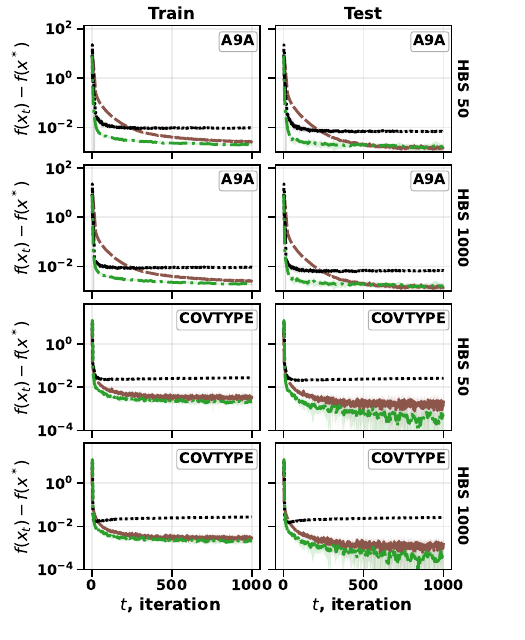}
  \end{minipage}
  \caption{Non-accelerated methods.}
  \label{fig:ball_reruns}
\end{figure}

\section{Experiments}
\label{sec:experiments}

We evaluate the methods on logistic regression using the \texttt{a9a} and \texttt{covtype} datasets from LIBSVM~\citep{chang2011libsvm}. 
We use a fixed random 80/20 train/test split. All coordinates of the initial point $x_0$ are set to~3. We optimize over the Euclidean ball $\XCal$ with center $x_0$ and diameter $D=2\norm{x_0}$.
The source code is \href{https://anonymous.4open.science/r/accelerated_stoch-4832/README.md}{available here}.

In Figure~\ref{fig:ball_reruns}, CDN is parameter-free. For CRN and SCCN, we use the same tuning budget of one scalar parameter: $M$ for CRN and $\sigma_1/D$ for SCCN, with $M=0$ for SCCN. Consequently, both
CRN and SCCN have exactly one tuned scalar parameter. 

\textbf{Discussion.} 
In Figure~\ref{fig:ball_reruns}, Algorithm~\ref{alg:basic_method} outperforms both CRN and CDN, showing that our additional quadratic regularization benefits convergence in the stochastic setting. CDN, which does not use our additional regularization, initially decreases the objective but then stagnates at
a higher error level. CRN continues to converge, but its progress is slower.

\clearpage
\subsection*{Limitations}
Our analysis is restricted to convex optimization over bounded convex
sets, and the convergence bounds depend on the domain diameter $D$. Developing an optimal stochastic second-order method for unconstrained convex optimization over $\R^d$ remains an open problem. Our methods require knowledge of problem parameters. Developing adaptive or parameter-free variants is an interesting direction for future work.

\subsection*{AI use statement}

In this work, we used generative AI tools to polish writing, to assist with coding, and to assist with parts of the theoretical analysis.
We have not used generative AI tools for retrieval and discovery (e.g., finding related work), writing sections of the paper, or generating synthetic datasets.
All research ideas, including the proposed methods, as well as the core analysis, are original work of the authors.
We have reviewed all AI-assisted work.

\textit{Text and code.}
We used OpenAI ChatGPT, Anthropic Claude, Codex, and Claude Code to edit author-written text and to assist with code development.

\textit{Mathematical claims.}
The proposed methods and the main ideas of the analysis are our own. We used ChatGPT, Codex, and Claude Code to transcribe our handwritten proofs into \LaTeX, to review the proofs, and to assist with their iterative refinement. 

We take responsibility for the final content of this work, including text, claims, or artifacts produced with the aid of generative AI.

\bibliography{agafonov}

\begin{thebibliography}{40}
\providecommand{\natexlab}[1]{#1}
\providecommand{\url}[1]{\texttt{#1}}
\expandafter\ifx\csname urlstyle\endcsname\relax
  \providecommand{\doi}[1]{doi: #1}\else
  \providecommand{\doi}{doi: \begingroup \urlstyle{rm}\Url}\fi

\bibitem[Agafonov et~al.(2021)Agafonov, Dvurechensky, Scutari, Gasnikov, Kamzolov, Lukashevich, and Daneshmand]{agafonov2021accelerated}
Artem Agafonov, Pavel Dvurechensky, Gesualdo Scutari, Alexander Gasnikov, Dmitry Kamzolov, Aleksandr Lukashevich, and Amir Daneshmand.
\newblock An accelerated second-order method for distributed stochastic optimization.
\newblock In \emph{2021 60th IEEE Conference on Decision and Control (CDC)}, pages 2407--2413, 2021.
\newblock ISBN 2576-2370.
\newblock \doi{10.1109/CDC45484.2021.9683400}.
\newblock URL \url{https://doi.org/10.1109/CDC45484.2021.9683400}.

\bibitem[Agafonov et~al.(2024{\natexlab{a}})Agafonov, Kamzolov, Dvurechensky, Gasnikov, and Tak{\'a}{\v{c}}]{agafonov2023inexact}
Artem Agafonov, Dmitry Kamzolov, Pavel Dvurechensky, Alexander Gasnikov, and Martin Tak{\'a}{\v{c}}.
\newblock Inexact tensor methods and their application to stochastic convex optimization.
\newblock \emph{Optimization Methods and Software}, 39\penalty0 (1):\penalty0 42--83, 2024{\natexlab{a}}.
\newblock \doi{10.1080/10556788.2023.2261604}.
\newblock URL \url{https://doi.org/10.1080/10556788.2023.2261604}.

\bibitem[Agafonov et~al.(2024{\natexlab{b}})Agafonov, Kamzolov, Gasnikov, Kavis, Antonakopoulos, Cevher, and Tak{\'a}{\v{c}}]{agafonov2024advancing}
Artem Agafonov, Dmitry Kamzolov, Alexander Gasnikov, Ali Kavis, Kimon Antonakopoulos, Volkan Cevher, and Martin Tak{\'a}{\v{c}}.
\newblock Advancing the lower bounds: an accelerated, stochastic, second-order method with optimal adaptation to inexactness.
\newblock In \emph{The Twelfth International Conference on Learning Representations}, 2024{\natexlab{b}}.
\newblock URL \url{https://openreview.net/forum?id=otU31x3fus}.

\bibitem[Agafonov et~al.(2026{\natexlab{a}})Agafonov, Ashabokov, D'yakonov, Tak{\'a}{\v{c}}, Gasnikov, and Kamzolov]{agafonov2026optimal}
Artem Agafonov, Aslan Ashabokov, Alexander D'yakonov, Martin Tak{\'a}{\v{c}}, Alexander Gasnikov, and Dmitry Kamzolov.
\newblock Optimal parameter-free second-order acceleration with inexact hessians under generalized smoothness.
\newblock \emph{arXiv preprint arXiv:2609.32977}, 2026{\natexlab{a}}.

\bibitem[Agafonov et~al.(2026{\natexlab{b}})Agafonov, Novitskii, Rogozin, Sokolov, Kamzolov, Dyakonov, Tak’a{\v{c}}, and Gasnikov]{agafonov2026decentralized}
Artem Agafonov, Anton Novitskii, Alexander Rogozin, Yury Sokolov, Dmitry Kamzolov, Alexander~G Dyakonov, Martin Tak’a{\v{c}}, and Alexander Gasnikov.
\newblock Decentralized inexact cubic newton method with consensus procedure.
\newblock \emph{IEEE Access}, 2026{\natexlab{b}}.

\bibitem[Agarwal and Hazan(2018)]{pmlr-v75-agarwal18a}
Naman Agarwal and Elad Hazan.
\newblock Lower bounds for higher-order convex optimization.
\newblock In Sébastien Bubeck, Vianney Perchet, and Philippe Rigollet, editors, \emph{Proceedings of the 31st Conference On Learning Theory}, volume~75 of \emph{Proceedings of Machine Learning Research}, pages 774--792. PMLR, 06--09 Jul 2018.
\newblock URL \url{https://proceedings.mlr.press/v75/agarwal18a.html}.

\bibitem[Antonakopoulos et~al.(2022)Antonakopoulos, Kavis, and Cevher]{antonakopoulos2022extra}
Kimon Antonakopoulos, Ali Kavis, and Volkan Cevher.
\newblock Extra-{N}ewton: A first approach to noise-adaptive accelerated second-order methods.
\newblock In S~Koyejo, S~Mohamed, A~Agarwal, D~Belgrave, K~Cho, and A~Oh, editors, \emph{Advances in Neural Information Processing Systems}, volume~35, pages 29859--29872. Curran Associates, Inc., 2022.
\newblock URL \url{https://proceedings.neurips.cc/paper_files/paper/2022/file/c10804702be5a0cca89331315413f1a2-Paper-Conference.pdf}.

\bibitem[Arjevani et~al.(2019)Arjevani, Shamir, and Shiff]{arjevani2019oracle}
Yossi Arjevani, Ohad Shamir, and Ron Shiff.
\newblock Oracle complexity of second-order methods for smooth convex optimization.
\newblock \emph{Mathematical Programming}, 178:\penalty0 327--360, 2019.
\newblock ISSN 1436-4646.
\newblock \doi{10.1007/s10107-018-1293-1}.
\newblock URL \url{https://doi.org/10.1007/s10107-018-1293-1}.

\bibitem[Bennett(1916)]{bennett1916newton}
Albert~A Bennett.
\newblock Newton's method in general analysis.
\newblock \emph{Proceedings of the National Academy of Sciences}, 2\penalty0 (10):\penalty0 592--598, 1916.

\bibitem[Carmon et~al.(2022)Carmon, Hausler, Jambulapati, Jin, and Sidford]{carmon2022optimal}
Yair Carmon, Danielle Hausler, Arun Jambulapati, Yujia Jin, and Aaron Sidford.
\newblock Optimal and adaptive {M}onteiro-{S}vaiter acceleration.
\newblock In S~Koyejo, S~Mohamed, A~Agarwal, D~Belgrave, K~Cho, and A~Oh, editors, \emph{Advances in Neural Information Processing Systems}, volume~35, pages 20338--20350. Curran Associates, Inc., 2022.
\newblock URL \url{https://proceedings.neurips.cc/paper_files/paper/2022/file/7ff97417474268e6b5a38bcbfae04944-Paper-Conference.pdf}.

\bibitem[Chang and Lin(2011)]{chang2011libsvm}
Chih-Chung Chang and Chih-Jen Lin.
\newblock {LIBSVM}: A library for support vector machines.
\newblock \emph{ACM Trans. Intell. Syst. Technol.}, 2\penalty0 (3), May 2011.
\newblock ISSN 2157-6904.
\newblock \doi{10.1145/1961189.1961199}.
\newblock URL \url{https://doi.org/10.1145/1961189.1961199}.

\bibitem[Chayti et~al.(2025)Chayti, Doikov, and Jaggi]{chayti2025improving}
El~Mahdi Chayti, Nikita Doikov, and Martin Jaggi.
\newblock Improving stochastic cubic {N}ewton with momentum.
\newblock In \emph{Proceedings of the 28th International Conference on Artificial Intelligence and Statistics}, volume 258 of \emph{Proceedings of Machine Learning Research}. PMLR, 2025.

\bibitem[Chen et~al.(2026)Chen, Liu, Luo, Lui, and Zhang]{chen2026optimal}
Lesi Chen, Chengchang Liu, Luo Luo, John Lui, and Jingzhao Zhang.
\newblock Optimal convex optimization with inexact second-order oracles.
\newblock \emph{arXiv preprint arXiv:2607.24520}, 2026.

\bibitem[Defazio and Gower(2021)]{defazio2021power}
Aaron Defazio and Robert~M Gower.
\newblock The power of factorial powers: New parameter settings for (stochastic) optimization.
\newblock In \emph{Asian Conference on Machine Learning}, pages 49--64. PMLR, 2021.

\bibitem[Doikov(2026)]{doikov2026minimizing}
Nikita Doikov.
\newblock Minimizing quasi-self-concordant functions by gradient regularization of newton method.
\newblock \emph{Mathematical Programming}, 219\penalty0 (1):\penalty0 583--621, 2026.

\bibitem[Doikov and Nesterov(2020)]{doikov2020convex}
Nikita Doikov and Yurii Nesterov.
\newblock Convex optimization based on global lower second-order models.
\newblock In H.~Larochelle, M.~Ranzato, R.~Hadsell, M.F. Balcan, and H.~Lin, editors, \emph{Advances in Neural Information Processing Systems}, volume~33, pages 16546--16556. Curran Associates, Inc., 2020.
\newblock URL \url{https://proceedings.neurips.cc/paper_files/paper/2020/file/c0c3a9fb8385d8e03a46adadde9af3bf-Paper.pdf}.

\bibitem[Doikov and Nesterov(2023)]{doikov2023gradient}
Nikita Doikov and Yurii Nesterov.
\newblock Gradient regularization of {N}ewton method with {B}regman distances.
\newblock \emph{Mathematical Programming}, 2023.
\newblock ISSN 1436-4646.
\newblock \doi{10.1007/s10107-023-01943-7}.
\newblock URL \url{https://doi.org/10.1007/s10107-023-01943-7}.

\bibitem[Doikov et~al.(2023)Doikov, Chayti, and Jaggi]{doikov2023second}
Nikita Doikov, El~Mahdi Chayti, and Martin Jaggi.
\newblock Second-order optimization with lazy {H}essians.
\newblock In Andreas Krause, Emma Brunskill, Kyunghyun Cho, Barbara Engelhardt, Sivan Sabato, and Jonathan Scarlett, editors, \emph{Proceedings of the 40th International Conference on Machine Learning}, volume 202, pages 8138--8161. PMLR, 9 2023.
\newblock URL \url{https://proceedings.mlr.press/v202/doikov23a.html}.

\bibitem[Dvurechensky et~al.(2022)Dvurechensky, Kamzolov, Lukashevich, Lee, Ordentlich, Uribe, and Gasnikov]{dvurechensky2022hyperfast}
Pavel Dvurechensky, Dmitry Kamzolov, Aleksandr Lukashevich, Soomin Lee, Erik Ordentlich, César~A Uribe, and Alexander Gasnikov.
\newblock Hyperfast second-order local solvers for efficient statistically preconditioned distributed optimization.
\newblock \emph{EURO Journal on Computational Optimization}, 10:\penalty0 100045, 2022.
\newblock ISSN 2192-4406.
\newblock \doi{https://doi.org/10.1016/j.ejco.2022.100045}.
\newblock URL \url{https://www.sciencedirect.com/science/article/pii/S2192440622000211}.

\bibitem[Frank and Wolfe(1956)]{frank1956algorithm}
Marguerite Frank and Philip Wolfe.
\newblock An algorithm for quadratic programming.
\newblock \emph{Naval research logistics quarterly}, 3\penalty0 (1-2):\penalty0 95--110, 1956.

\bibitem[Ghadimi et~al.(2017)Ghadimi, Liu, and Zhang]{ghadimi2017second}
Saeed Ghadimi, Han Liu, and Tong Zhang.
\newblock Second-order methods with cubic regularization under inexact information.
\newblock \emph{arXiv preprint arXiv:1710.05782}, 2017.

\bibitem[Hanzely et~al.(2022)Hanzely, Kamzolov, Pasechnyuk, Gasnikov, Richt{\'a}rik, and Tak{\'a}{\v{c}}]{hanzely2022damped}
Slavom\'{\i}r Hanzely, Dmitry Kamzolov, Dmitry Pasechnyuk, Alexander Gasnikov, Peter Richt{\'a}rik, and Martin Tak{\'a}{\v{c}}.
\newblock A damped {N}ewton method achieves global $\mathcal{O}\left (\frac{1}{k^{2}}\right) $ and local quadratic convergence rate.
\newblock In S.~Koyejo, S.~Mohamed, A.~Agarwal, D.~Belgrave, K.~Cho, and A.~Oh, editors, \emph{Advances in Neural Information Processing Systems}, volume~35, pages 25320--25334. Curran Associates, Inc., 2022.
\newblock URL \url{https://proceedings.neurips.cc/paper_files/paper/2022/file/a1f0c0cd6caaa4863af5f12608edf63e-Paper-Conference.pdf}.

\bibitem[Kamzolov et~al.(2025{\natexlab{a}})Kamzolov, Agafonov, Pasechnyuk, Gasnikov, and Tak{\'a}{\v{c}}]{kamzolov2025optami}
Dmitry Kamzolov, Artem Agafonov, Dmitry Pasechnyuk, Alexander Gasnikov, and Martin Tak{\'a}{\v{c}}.
\newblock {OPTAMI}: Global superlinear convergence of high-order methods.
\newblock In \emph{The Thirteenth International Conference on Learning Representations}, 2025{\natexlab{a}}.
\newblock URL \url{https://openreview.net/forum?id=Cpr6Wv2tfr}.

\bibitem[Kamzolov et~al.(2025{\natexlab{b}})Kamzolov, Ziu, Agafonov, and Tak{\'a}{\v{c}}]{kamzolov2026accelerated}
Dmitry Kamzolov, Klea Ziu, Artem Agafonov, and Martin Tak{\'a}{\v{c}}.
\newblock Accelerated adaptive cubic regularized quasi-{N}ewton methods.
\newblock \emph{Journal of Optimization Theory and Applications}, 208:\penalty0 27, 2025{\natexlab{b}}.
\newblock ISSN 1573-2878.
\newblock \doi{10.1007/s10957-025-02804-3}.
\newblock URL \url{https://doi.org/10.1007/s10957-025-02804-3}.

\bibitem[Kantorovich(1949)]{kantorovich1949newton}
Leonid~Vitalyevich Kantorovich.
\newblock On {N}ewton's method.
\newblock \emph{Trudy Matematicheskogo Instituta imeni VA Steklova}, 28:\penalty0 104--144, 1949.
\newblock (In Russian).

\bibitem[Kovalev and Gasnikov(2022)]{kovalev2022first}
Dmitry Kovalev and Alexander Gasnikov.
\newblock The first optimal acceleration of high-order methods in smooth convex optimization.
\newblock In S~Koyejo, S~Mohamed, A~Agarwal, D~Belgrave, K~Cho, and A~Oh, editors, \emph{Advances in Neural Information Processing Systems}, volume~35, pages 35339--35351. Curran Associates, Inc., 2022.
\newblock URL \url{https://proceedings.neurips.cc/paper_files/paper/2022/file/e56f394bbd4f0ec81393d767caa5a31b-Paper-Conference.pdf}.

\bibitem[Lan(2012)]{lan2012optimal}
Guanghui Lan.
\newblock An optimal method for stochastic composite optimization.
\newblock \emph{Mathematical Programming}, 133:\penalty0 365--397, 2012.
\newblock ISSN 1436-4646.
\newblock \doi{10.1007/s10107-010-0434-y}.
\newblock URL \url{https://doi.org/10.1007/s10107-010-0434-y}.

\bibitem[Mishchenko(2023)]{mishchenko2023regularized}
Konstantin Mishchenko.
\newblock Regularized {N}ewton method with global $\mathcal{O}\left (\frac{1}{k^{2}}\right)$ convergence.
\newblock \emph{SIAM Journal on Optimization}, 33:\penalty0 1440--1462, 2023.
\newblock \doi{10.1137/22M1488752}.
\newblock URL \url{https://doi.org/10.1137/22M1488752}.

\bibitem[Mokhtari et~al.(2020)Mokhtari, Hassani, and Karbasi]{mokhtari2020stochastic}
Aryan Mokhtari, Hamed Hassani, and Amin Karbasi.
\newblock Stochastic conditional gradient methods: From convex minimization to submodular maximization.
\newblock \emph{Journal of machine learning research}, 21\penalty0 (105):\penalty0 1--49, 2020.

\bibitem[Monteiro and Svaiter(2013)]{monteiro2013accelerated}
Renato D~C Monteiro and B~F Svaiter.
\newblock An accelerated hybrid proximal extragradient method for convex optimization and its implications to second-order methods.
\newblock \emph{SIAM Journal on Optimization}, 23:\penalty0 1092--1125, 2013.
\newblock \doi{10.1137/110833786}.
\newblock URL \url{https://doi.org/10.1137/110833786}.

\bibitem[Nemirovski et~al.(2009)Nemirovski, Juditsky, Lan, and Shapiro]{nemirovski2009robust}
A~Nemirovski, A~Juditsky, G~Lan, and A~Shapiro.
\newblock Robust stochastic approximation approach to stochastic programming.
\newblock \emph{SIAM Journal on Optimization}, 19:\penalty0 1574--1609, 2009.
\newblock \doi{10.1137/070704277}.
\newblock URL \url{https://doi.org/10.1137/070704277}.

\bibitem[Nemirovski and Yudin(1983)]{nemirovskiĭ1983problem}
A.S. Nemirovski and D.B. Yudin.
\newblock \emph{Problem Complexity and Method Efficiency in Optimization}.
\newblock A Wiley-Interscience publication. Wiley, 1983.
\newblock ISBN 9780471103455.
\newblock URL \url{https://books.google.fr/books?id=6ULvAAAAMAAJ}.

\bibitem[Nesterov(1983)]{nesterov1983method}
Yurii Nesterov.
\newblock A method for solving the convex programming problem with convergence rate $\mathcal{O}\left (\frac{1}{k^{2}}\right) $.
\newblock \emph{Doklady Akademii Nauk SSSR}, 269\penalty0 (3):\penalty0 543--547, 1983.
\newblock (In Russian).

\bibitem[Nesterov(2008)]{nesterov2008accelerating}
Yurii Nesterov.
\newblock Accelerating the cubic regularization of {N}ewton’s method on convex problems.
\newblock \emph{Mathematical Programming}, 112:\penalty0 159--181, 2008.
\newblock ISSN 1436-4646.
\newblock \doi{10.1007/s10107-006-0089-x}.
\newblock URL \url{https://doi.org/10.1007/s10107-006-0089-x}.

\bibitem[Nesterov(2018)]{nesterov2018lectures}
Yurii Nesterov.
\newblock \emph{Lectures on Convex Optimization}.
\newblock Springer Cham, 2 edition, 2018.
\newblock ISBN 978-3-319-91577-7.
\newblock \doi{10.1007/978-3-319-91578-4}.

\bibitem[Nesterov and Polyak(2006)]{nesterov2006cubic}
Yurii Nesterov and Boris~T Polyak.
\newblock Cubic regularization of {N}ewton method and its global performance.
\newblock \emph{Mathematical Programming}, 108:\penalty0 177--205, 2006.
\newblock \doi{10.1007/s10107-006-0706-8}.
\newblock URL \url{https://doi.org/10.1007/s10107-006-0706-8}.

\bibitem[Pedregosa et~al.(2011)Pedregosa, Varoquaux, Gramfort, Michel, Thirion, Grisel, Blondel, Prettenhofer, Weiss, Dubourg, et~al.]{pedregosa2011scikit}
Fabian Pedregosa, Ga{\"e}l Varoquaux, Alexandre Gramfort, Vincent Michel, Bertrand Thirion, Olivier Grisel, Mathieu Blondel, Peter Prettenhofer, Ron Weiss, Vincent Dubourg, et~al.
\newblock Scikit-learn: Machine learning in python.
\newblock \emph{the Journal of machine Learning research}, 12:\penalty0 2825--2830, 2011.

\bibitem[Semenov et~al.(2026)Semenov, Jaggi, and Doikov]{semenov2026gradient}
Andrei Semenov, Martin Jaggi, and Nikita Doikov.
\newblock Gradient-normalized smoothness for optimization with approximate {H}essians.
\newblock In C.~Vondrick, B.~Hariharan, C.~Raffel, L.~Pinto, D.~Yang, and A.~Faust, editors, \emph{International Conference on Learning Representations}, volume 2026, pages 13974--14029, 2026.
\newblock URL \url{https://proceedings.iclr.cc/paper_files/paper/2026/file/1759a83b007f0685c3fbc460fa1b6395-Paper-Conference.pdf}.

\bibitem[Zhang et~al.(2026)Zhang, Chen, Liu, and Zhang]{zhang2026nearoptimalconvexoptimizationlazy}
Xinliang Zhang, Lesi Chen, Chengchang Liu, and Jingzhao Zhang.
\newblock Near-optimal convex optimization with lazy second-order oracles, 2026.
\newblock URL \url{https://arxiv.org/abs/2610.03222}.

\bibitem[Zhang and Lin(2015)]{zhang2015disco}
Yuchen Zhang and Xiao Lin.
\newblock {D}i{SCO}: Distributed optimization for self-concordant empirical loss.
\newblock In Francis Bach and David Blei, editors, \emph{Proceedings of the 32nd International Conference on Machine Learning}, volume~37 of \emph{Proceedings of Machine Learning Research}, pages 362--370, Lille, France, 07--09 Jul 2015. PMLR.
\newblock URL \url{https://proceedings.mlr.press/v37/zhangb15.html}.

\end{thebibliography}
\bibliographystyle{plainnat}

\clearpage
\appendix

\section{Proof of Basic Method}
\label{sec:proof_basic}

Let us restate Lemma~\ref{lem:basic_one_step_main}.

\begin{lemma}
  \label{lem:basic_one_step}
  Let $f$ be convex and satisfy Assumption~\ref{as:lip_hess}, and let $M\geq 0$. For the iterates of Algorithm~\ref{alg:basic_method}, with $0<\alpha_t\leq1$, $\eta_t\geq0$, and $H_t\succeq0$, we have
  \begin{equation}
    \label{eq:basic_one_step}
    \begin{aligned}
      f(x_{t+1})-f^*
      \leq{}& (1-\alpha_t)\ls f(x_t)-f^*\rs \\
      &+\frac{\alpha_t\eta_t}{2}\ls
        \norm{u_t-x^*}^2-\norm{u_{t+1}-x^*}^2-\norm{u_{t+1}-u_t}^2
      \rs \\
      &+\alpha_t\la\g(x_t)-g_t,u_{t+1}-x^*\ra
      +\alpha_t^2D^2\norm{H_t-\h(x_t)}
      +\frac{\max\lb M,L_2\rb}{3}\alpha_t^3D^3.
    \end{aligned}
  \end{equation}
\end{lemma}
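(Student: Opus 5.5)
Write $x_{t+1}-x_t=\alpha_t(u_{t+1}-x_t)$ and note $\norm{u_{t+1}-x_t}\le D$. The argument runs in three steps.

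\textbf{Step 1 (upper bound via the model).} Apply the first inequality of Lemma~\ref{lem:taylor_remainder} at $x_t$, $y=x_{t+1}$:
\begin{equation*}
f(x_{t+1})\le f(x_t)+\alpha_t\la\g(x_t),u_{t+1}-x_t\ra+\tfrac{\alpha_t^2}{2}\la\h(x_t)(u_{t+1}-x_t),u_{t+1}-x_t\ra+\tfrac{L_2}{6}\alpha_t^3\norm{u_{t+1}-x_t}^3 .
\end{equation*}
Replace the exact derivatives by $g_t,H_t$. The gradient swap produces $\alpha_t\la\g(x_t)-g_t,u_{t+1}-x_t\ra$. The Hessian swap costs at most $\tfrac{\alpha_t^2}{2}D^2\norm{H_t-\h(x_t)}$. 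The result is
\begin{equation*}
f(x_{t+1})\le f(x_t)+\alpha_t\Bigl[\omega^M_t(u_{t+1})-\tfrac{\eta_t}{2}\norm{u_{t+1}-u_t}^2-\tfrac{M\alpha_t^2}{6}\norm{u_{t+1}-x_t}^3\Bigr]+\alpha_t\la\g(x_t)-g_t,u_{t+1}-x_t\ra+\tfrac{\alpha_t^2D^2}{2}\norm{H_t-\h(x_t)}+\tfrac{L_2}{6}\alpha_t^3D^3 .
\end{equation*}
The negative cubic term can be dropped.

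\textbf{Step 2 (optimality of $u_{t+1}$ with strong convexity).} The function $\omega^M_t$ is convex because $H_t\succeq0$, and its term $\tfrac{\eta_t}{2}\norm{u-u_t}^2$ makes it $\eta_t$-strongly convex. Since $u_{t+1}$ minimizes it over $\XCal$,
\begin{equation*}
\omega^M_t(u_{t+1})+\tfrac{\eta_t}{2}\norm{x^*-u_{t+1}}^2\le\omega^M_t(x^*).
\end{equation*}
Expanding $\omega^M_t(x^*)$ gives:
\begin{itemize}
\item the linear term $\la g_t,x^*-x_t\ra$;
\item the quadratic term $\tfrac{\alpha_t}{2}\la H_t(x^*-x_t),x^*-x_t\ra$;
\item the proximal term $\tfrac{\eta_t}{2}\norm{x^*-u_t}^2$;
\item the cubic term $\tfrac{M\alpha_t^2}{6}\norm{x^*-x_t}^3\le\tfrac{M\alpha_t^2}{6}D^3$.
\end{itemize}
Combining with Step 1, the $\eta_t$ terms give exactly $\tfrac{\alpha_t\eta_t}{2}(\norm{u_t-x^*}^2-\norm{u_{t+1}-x^*}^2-\norm{u_{t+1}-u_t}^2)$.

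\textbf{Step 3 (convexity and Hessian lower bound).} Write $\la g_t,x^*-x_t\ra=\la\g(x_t),x^*-x_t\ra+\la g_t-\g(x_t),x^*-x_t\ra$. The noise piece merges with the Step 1 noise term to give exactly $\alpha_t\la\g(x_t)-g_t,u_{t+1}-x^*\ra$.

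For the remaining quadratic term, replace $H_t$ by $\h(x_t)$ at cost $\tfrac{\alpha_t^2}{2}D^2\norm{H_t-\h(x_t)}$. This brings the total Hessian-error cost to $\alpha_t^2D^2\norm{H_t-\h(x_t)}$.

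Next use the lower Taylor bound of Lemma~\ref{lem:taylor_remainder} at $x_t$, $y=x^*$:
\begin{equation*}
\la\g(x_t),x^*-x_t\ra+\tfrac12\la\h(x_t)(x^*-x_t),x^*-x_t\ra\le f^*-f(x_t)+\tfrac{L_2}{6}D^3 .
\end{equation*}
This handles the quadratic coefficient $\alpha_t/2$, which differs from $1/2$. The decomposition used is
\begin{equation*}
\alpha_t\bigl[\la\g,x^*-x_t\ra+\tfrac{\alpha_t}{2}\la\h\cdot,\cdot\ra\bigr]=\alpha_t(1-\alpha_t)\la\g,x^*-x_t\ra+\alpha_t^2\bigl[\la\g,x^*-x_t\ra+\tfrac12\la\h\cdot,\cdot\ra\bigr].
\end{equation*}
Bound $\la\g(x_t),x^*-x_t\ra\le f^*-f(x_t)$ by convexity in the first part, and use the Taylor lower bound in the second. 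The function-value terms sum to $-\alpha_t(f(x_t)-f^*)$.

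\textbf{Cubic constants (main obstacle).} This is where care is needed. The Step 3 Taylor bound carries weight $\alpha_t^2$, so it contributes $\tfrac{L_2}{6}\alpha_t^2D^3$. That has the wrong power of $\alpha_t$ for the target $\tfrac{\max\{M,L_2\}}{3}\alpha_t^3D^3$.

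To fix this, I would apply the Taylor bound along the segment point $x_t+\alpha_t(x^*-x_t)$ instead of at $x^*$ itself. Equivalently, compare against $\tilde y=(1-\alpha_t)x_t+\alpha_tx^*$, using convexity $f(\tilde y)\le(1-\alpha_t)f(x_t)+\alpha_tf^*$. Then the second-order model at $\tilde y$ has exactly coefficient $\alpha_t^2/2$, and the error is $\tfrac{L_2}{6}\alpha_t^3D^3$.

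With this choice the cubic contributions are:
\begin{itemize}
\item $\tfrac{L_2}{6}\alpha_t^3D^3$ from Step 1;
\item $\tfrac{L_2}{6}\alpha_t^3D^3$ from this comparison;
\item $\tfrac{M}{6}\alpha_t^3D^3$ from the model value at $x^*$.
\end{itemize}
This totals at most $\tfrac{\max\{M,L_2\}}{3}\alpha_t^3D^3$, after absorbing $\tfrac{L_2+M}{6}\cdot$ terms (possibly bounding $\tfrac{2L_2+M}{6}$ more carefully, or noting the Step 1 cubic can be cancelled by the model's $M$ term when $M\ge L_2$). Checking that these constants close exactly is the step I would verify last.
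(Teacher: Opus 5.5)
Your route is the paper's. You use the Taylor upper bound at $x_{t+1}$, then the $\eta_t$-strongly-convex optimality inequality $\omega_t^M(u_{t+1})+\frac{\eta_t}{2}\|x^*-u_{t+1}\|^2\le\omega_t^M(x^*)$. After abandoning the $\alpha_t^2$-weighted Taylor bound at $x^*$, you switch to the Taylor lower bound at the comparator $(1-\alpha_t)x_t+\alpha_tx^*$, followed by convexity. The gradient-noise, Hessian-error and $\eta_t$ terms all come out exactly as in the statement.

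The gap is in the cubic bookkeeping, the step you deferred, and it does not close as written. In Step 1 you drop $-\frac{M\alpha_t^3}{6}\|u_{t+1}-x_t\|^3$, so your three contributions total $\frac{2L_2+M}{6}\alpha_t^3D^3$. The inequality $2L_2+M\le 2\max\{M,L_2\}$ fails whenever $0<M<2L_2$; for example, $M=L_2$ gives $\frac{L_2}{2}$ against the claimed $\frac{L_2}{3}$. So the sentence ``This totals at most $\frac{\max\{M,L_2\}}{3}\alpha_t^3D^3$'' is false for your Step 1. Your suggested remedy, cancellation when $M\ge L_2$, leaves the case $0<M<L_2$ open, and the lemma allows every $M\ge0$.

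The missing step is to keep the negative cubic term in Step 1. Merge it with the Taylor remainder before invoking the diameter bound:
\[
\frac{(L_2-M)\alpha_t^3}{6}\|u_{t+1}-x_t\|^3\le\frac{(L_2-M)_+}{6}\alpha_t^3D^3,
\]
which holds for every $M\ge0$. The total cubic contribution then becomes
\[
\frac{M+L_2+(L_2-M)_+}{6}\alpha_t^3D^3=\frac{L_2+\max\{M,L_2\}}{6}\alpha_t^3D^3\le\frac{\max\{M,L_2\}}{3}\alpha_t^3D^3 .
\]
For $M\ge L_2$ the Step 1 term vanishes. For $M<L_2$, the $-M$ retained in Step 1 offsets exactly the $+M$ coming from $\omega_t^M(x^*)$. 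This is how the paper closes the constant, and with this one change your argument proves the lemma as stated.
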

\begin{proof}
  Optimality of $u_{t+1}$ over $\XCal$ gives
  \begin{equation*}
    \la\nabla\omega_t^M(u_{t+1}),x-u_{t+1}\ra\geq0,
    \qquad x\in\XCal.
  \end{equation*}
  Since $H_t\succeq0$, the model $\omega_t^M$ is convex and, when $\eta_t>0$, is $\eta_t$-strongly convex. Therefore,
  \begin{equation*}
    \omega_t^M(x)
    \geq\omega_t^M(u_{t+1})
      +\la\nabla\omega_t^M(u_{t+1}),x-u_{t+1}\ra
      +\frac{\eta_t}{2}\norm{x-u_{t+1}}^2
    \geq\omega_t^M(u_{t+1})+\frac{\eta_t}{2}\norm{x-u_{t+1}}^2.
  \end{equation*}
  Rearranging yields
  \begin{equation}
    \label{eq:basic_model_comparison}
    \omega_t^M(u_{t+1})
    \leq\omega_t^M(x)-\frac{\eta_t}{2}\norm{u_{t+1}-x}^2,
    \qquad x\in\XCal.
  \end{equation}

  By Lemma~\ref{lem:taylor_remainder} with $x=x_t$ and $y=x_{t+1}$, using $x_{t+1}-x_t=\alpha_t(u_{t+1}-x_t)$, we obtain
  \begin{equation}
    \label{eq:basic_model_upper}
    \begin{aligned}
      f(x_{t+1})
      &\leq f(x_t)+\alpha_t\la\g(x_t),u_{t+1}-x_t\ra \\
      &\quad+\frac{\alpha_t^2}{2}\la\h(x_t)(u_{t+1}-x_t),u_{t+1}-x_t\ra
      +\frac{L_2\alpha_t^3}{6}\norm{u_{t+1}-x_t}^3 \\
      &\stackrel{\eqref{eq:cubic_model}}{=} f(x_t)+\alpha_t\omega_t^M(u_{t+1})
      -\frac{\alpha_t\eta_t}{2}\norm{u_{t+1}-u_t}^2 
      +\alpha_t\la\g(x_t)-g_t,u_{t+1}-x_t\ra \\
      &\quad+\frac{\alpha_t^2}{2}\la\ls\h(x_t)-H_t\rs(u_{t+1}-x_t),u_{t+1}-x_t\ra 
      +\frac{(L_2-M)\alpha_t^3}{6}\norm{u_{t+1}-x_t}^3 \\
      &\leq f(x_t)+\alpha_t\omega_t^M(u_{t+1})
      -\frac{\alpha_t\eta_t}{2}\norm{u_{t+1}-u_t}^2 
      +\alpha_t\la\g(x_t)-g_t,u_{t+1}-x_t\ra \\
      &\quad+\frac{\alpha_t^2D^2}{2}\norm{H_t-\h(x_t)} +\frac{(L_2-M)_{+}}{6}\alpha_t^3D^3\\
      &\stackrel{\eqref{eq:basic_model_comparison}}{\leq} f(x_t)+\alpha_t\omega_t^M(x)
      -\frac{\alpha_t\eta_t}{2}\ls\norm{u_{t+1}-x}^2+\norm{u_{t+1}-u_t}^2\rs
      +\alpha_t\la\g(x_t)-g_t,u_{t+1}-x_t\ra \\
      &\quad +\frac{\alpha_t^2D^2}{2}\norm{H_t-\h(x_t)} +\frac{(L_2-M)_+}{6}\alpha_t^3D^3,
    \end{aligned}
  \end{equation}
  where the second inequality uses $\norm{u_{t+1}-x_t}\leq D$ and
  \begin{equation*}
    (L_2-M)\norm{u_{t+1}-x_t}^3\leq(L_2-M)_+D^3,
    \qquad (L_2-M)_+=\max\lb L_2-M,0\rb.
  \end{equation*}

  Next, $(1-\alpha_t)x_t+\alpha_tx\in\XCal$ since $0<\alpha_t\leq1$. By Lemma~\ref{lem:taylor_remainder} at this point
  \begin{equation}
    \begin{aligned}
      \label{eq:basic_comparator_taylor}
      f\ls(1-\alpha_t)x_t+\alpha_tx\rs 
      \geq{} & f(x_t)+\alpha_t\la\g(x_t),x-x_t\ra\\
      &+\frac{\alpha_t^2}{2}\la\h(x_t)(x-x_t),x-x_t\ra
      -\frac{L_2\alpha_t^3}{6}\norm{x-x_t}^3.
    \end{aligned}
  \end{equation}
  Therefore, by~\eqref{eq:basic_comparator_taylor} and the definition of $\omega_t^M(x)$~\eqref{eq:cubic_model},
  \begin{equation}
    \label{eq:basic_comparator_bound}
    \begin{aligned}
      f(x_t)+\alpha_t\omega_t^M(x)
      \stackrel{\eqref{eq:cubic_model}}{=}& f(x_t)+\alpha_t\la\g(x_t),x-x_t\ra
      +\frac{\alpha_t^2}{2}\la\h(x_t)(x-x_t),x-x_t\ra\\
      &+\alpha_t\la g_t-\g(x_t),x-x_t\ra\\
      &+\frac{\alpha_t^2}{2}\la\ls H_t-\h(x_t)\rs(x-x_t),x-x_t\ra\\
      &+\frac{M\alpha_t^3}{6}\norm{x-x_t}^3
      +\frac{\alpha_t\eta_t}{2}\norm{x-u_t}^2\\
      \stackrel{\eqref{eq:basic_comparator_taylor}}{\leq}& f\ls(1-\alpha_t)x_t+\alpha_tx\rs
      +\frac{\alpha_t\eta_t}{2}\norm{x-u_t}^2
      +\alpha_t\la g_t-\g(x_t),x-x_t\ra\\
      &+\frac{\alpha_t^2}{2}\la\ls H_t-\h(x_t)\rs(x-x_t),x-x_t\ra
      +\frac{M+L_2}{6}\alpha_t^3\norm{x-x_t}^3\\
      \leq{}& f\ls(1-\alpha_t)x_t+\alpha_tx\rs
      +\frac{\alpha_t\eta_t}{2}\norm{x-u_t}^2
      +\alpha_t\la g_t-\g(x_t),x-x_t\ra\\
      &+\frac{\alpha_t^2D^2}{2}\norm{H_t-\h(x_t)}
      +\frac{M+L_2}{6}\alpha_t^3D^3,
    \end{aligned}
  \end{equation}
  where the last inequality uses $\norm{x-x_t}\leq D$.
  Substituting~\eqref{eq:basic_comparator_bound} into~\eqref{eq:basic_model_upper} with $x=x^*$ and using the convexity
  \begin{equation*}
    f\ls(1-\alpha_t)x_t+\alpha_tx^*\rs
    \leq(1-\alpha_t)f(x_t)+\alpha_tf^*
  \end{equation*}
  together with $M+L_2+(L_2-M)_+=L_2+\max\lb M,L_2\rb\leq2\max\lb M,L_2\rb$ yields~\eqref{eq:basic_one_step}.
\end{proof}

Let us restate Theorem~\ref{thm:basic_convergence}. 

\begin{theorem}
  Let $f$ be convex. Let Assumptions~\ref{as:lip_hess},~\ref{as:stoch_grad},~\ref{as:unbiased_grad}, and~\ref{as:stoch_hess} hold. Then Algorithm~\ref{alg:basic_method} with $M\geq0$ and parameters~\eqref{eq:basic_reg} satisfies, for every integer $T\geq1$,
  \begin{equation}
    \label{eq:basic_rate}
      \E\lp f(x_T)-f^*\rp
      \leq\frac{9\max\lb M,L_2\rb D^3}{(T+1)(T+2)} 
      +\frac{9\sigma_2D^2}{2(T+2)}+\frac{9\sigma_1D}{4\sqrt T}.
  \end{equation}
  In particular, if $M\geq0$ and $M=\cO(L_2)$ with an absolute implied constant, then
  \begin{equation*}
    \E\lp f(x_T)-f^*\rp
    =\cO\ls\frac{\sigma_1D}{\sqrt T} + \frac{\sigma_2D^2}{T} + \frac{L_2D^3}{T^2}\rs.
  \end{equation*}
\end{theorem}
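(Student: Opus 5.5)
The plan is to start from the one-step inequality of Lemma~\ref{lem:basic_one_step} and take expectations term by term, as sketched after Lemma~\ref{lem:basic_one_step_main}. Then I will close the recursion with the weights $W_t=(t+1)(t+2)(t+3)$, which satisfy $W_t(1-\alpha_t)=t(t+1)(t+2)=W_{t-1}$ for $\alpha_t=3/(t+3)$.

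\textbf{Oracle errors.} Since $u_t$ and $x_t$ are determined before $g_t$ is drawn, Assumption~\ref{as:unbiased_grad} gives $\E\la\g(x_t)-g_t,u_t-x^*\ra=0$. By Young's inequality,
\[
\alpha_t\la\g(x_t)-g_t,u_{t+1}-u_t\ra-\tfrac{\alpha_t\eta_t}{2}\norm{u_{t+1}-u_t}^2\le \tfrac{\alpha_t}{2\eta_t}\norm{g_t-\g(x_t)}^2,
\]
whose expectation is at most $\alpha_t\sigma_1^2/(2\eta_t)$. Jensen's inequality gives $\E\norm{H_t-\h(x_t)}\le\sigma_2$. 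This yields the expected recursion displayed in the main text, with $r_t:=\E\norm{u_t-x^*}^2\le D^2$.

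\textbf{Weighted telescoping.} Multiply by $W_t$ and sum over $t=0,\dots,T-1$. The $\Delta$ terms telescope, since $W_{-1}=0$, leaving $W_{T-1}\E\Delta_T$ on the left. The three error sums are elementary. First, $\sum_t W_t\alpha_t^3 D^3\max\{M,L_2\}/3=9\max\{M,L_2\}D^3T$. Second, $\sum_t W_t\alpha_t^2\sigma_2D^2=9\sigma_2D^2\sum_t \frac{t+1}{t+3}\cdot(t+2)\lesssim\sigma_2D^2T^2$. Third, $\sum_t W_t\alpha_t\sigma_1^2/(2\eta_t)=\frac{3\sigma_1D}{2}\sum_t \frac{(t+1)(t+2)}{\sqrt{t+1}}\lesssim\sigma_1D\,T^{5/2}$. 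Dividing by $W_{T-1}=T(T+1)(T+2)$ gives the $D^3/T^2$, $D^2/T$ and $D/\sqrt T$ shapes. The stated constants ($9$, $9/2$, $9/4$) would come from carefully bounding these sums, for instance via $\sum_{t<T}(t+1)^{3/2}(t+2)\le$ an integral comparison.

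\textbf{Main obstacle: the distance terms.} These are $\sum_t c_t(r_t-r_{t+1})$ with $c_t=W_t\alpha_t\eta_t/2=\tfrac{3\sigma_1}{2D}(t+1)^{3/2}(t+2)$. Abel summation rewrites them as
\[
c_0r_0+\sum_{t=1}^{T-1}(c_t-c_{t-1})r_t-c_{T-1}r_T .
\]
The sequence $c_t$ is increasing, so every coefficient is nonnegative and $r_t\le D^2$ applies. Dropping $-c_{T-1}r_T$ bounds the whole sum by $c_{T-1}D^2=\tfrac32\sigma_1D\,T^{3/2}(T+1)$. After dividing by $W_{T-1}$ this is $\lesssim\sigma_1D/\sqrt T$, the same order as the gradient-variance term. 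This balance is exactly what the choice $\eta_t\propto\sigma_1\sqrt{t+1}/D$ achieves.

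Finally, I will add the four contributions and simplify the constants to reach~\eqref{eq:basic_rate}. The $\cO$ corollary is immediate when $M=\cO(L_2)$. The step I expect to need the most care is matching the exact constants in the $\sigma_1$ term, which combines the Abel-summation bound with the variance sum.
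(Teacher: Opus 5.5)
Your proposal is correct and follows the paper's proof essentially step for step: the same one-step lemma, unbiasedness plus Young's inequality against $-\tfrac{\alpha_t\eta_t}{2}\|u_{t+1}-u_t\|^2$ for the gradient error, Jensen for the Hessian term, the weights $(t+1)(t+2)(t+3)$, and Abel summation with nondecreasing coefficients and $\|u_t-x^*\|^2\le D^2$. A few small fixes are still needed. The first error sum is only $\le 9\max\{M,L_2\}D^3T$, not an equality. The stated constants follow from the crude bounds $\sqrt{t+1}\le\sqrt T$, $\sum_{t<T}(t+1)(t+2)/(t+3)\le T(T+1)/2$ and $T(3T+5)\le 3(T+1)(T+2)$. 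The case $\sigma_1=0$, where $\eta_t=0$ and the $1/\eta_t$ Young step is unavailable, has to be treated separately; this is trivial, since then $g_t=\nabla f(x_t)$ almost surely.
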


\begin{proof}
  Suppose first that $\sigma_1>0$, so that $\eta_t>0$. By~\eqref{eq:unbiased_grad} we have
  \begin{equation*}
    \E\lp\la\g(x_t)-g_t,u_t-x^*\ra
      \mid x_t,u_t\rp=0.
  \end{equation*}
  Splitting $u_{t+1}-x^*=(u_{t+1}-u_t)+(u_t-x^*)$ and applying Young's inequality gives
  \begin{gather*}
    \E\lp\la\g(x_t)-g_t,u_{t+1}-x^*\ra
      -\frac{\eta_t}{2}\norm{u_{t+1}-u_t}^2\rp 
    =\E\lp\la\g(x_t)-g_t,u_{t+1}-u_t\ra
      -\frac{\eta_t}{2}\norm{u_{t+1}-u_t}^2\rp \\
    \leq\frac{1}{2\eta_t}\E\norm{g_t-\g(x_t)}^2
    \stackrel{\eqref{eq:stoch_grad}}{\leq}\frac{\sigma_1^2}{2\eta_t}.
  \end{gather*}
  Also, by~\eqref{eq:stoch_hess},
  \begin{equation*}
    \E\norm{H_t-\h(x_t)}
    \leq\ls\E\norm{H_t-\h(x_t)}^2\rs^{1/2}
    \leq\sigma_2.
  \end{equation*}
  Taking expectations in~\eqref{eq:basic_one_step} and substituting these bounds yields
  \begin{equation}
    \label{eq:basic_expected_step}
    \begin{aligned}
      \E\lp f(x_{t+1})-f^*\rp
      \leq{}&(1-\alpha_t)\E\lp f(x_t)-f^*\rp 
      +\frac{\alpha_t\eta_t}{2}\ls
        \E\norm{u_t-x^*}^2-\E\norm{u_{t+1}-x^*}^2
      \rs  \\
      &+\frac{\max\lb M,L_2\rb}{3}\alpha_t^3D^3
      +\alpha_t^2\sigma_2D^2+\frac{\alpha_t\sigma_1^2}{2\eta_t}.
    \end{aligned}
  \end{equation}
  Multiplying~\eqref{eq:basic_expected_step} by $(t+1)(t+2)(t+3)$ and substituting $\alpha_t=3/(t+3)$ gives
  \begin{gather*}
      (t+1)(t+2)(t+3)\E\lp f(x_{t+1})-f^*\rp
      -t(t+1)(t+2)\E\lp f(x_t)-f^*\rp \\
      \leq\frac{3}{2}(t+1)(t+2)\eta_t
        \ls\E\norm{u_t-x^*}^2-\E\norm{u_{t+1}-x^*}^2\rs \\
      +9\max\lb M,L_2\rb D^3\frac{(t+1)(t+2)}{(t+3)^2}
      +9\sigma_2D^2\frac{(t+1)(t+2)}{t+3}
      +\frac{3\sigma_1^2}{2}\frac{(t+1)(t+2)}{\eta_t}.
  \end{gather*}
  Now sum it over $t=0,\ldots,T-1$. The left-hand side telescopes:
  \begin{gather*}
    \sum_{t=0}^{T-1}\ls
      (t+1)(t+2)(t+3)\E\lp f(x_{t+1})-f^*\rp
      -t(t+1)(t+2)\E\lp f(x_t)-f^*\rp
    \rs\\
    =T(T+1)(T+2)\E\lp f(x_T)-f^*\rp,
  \end{gather*}
  where we used $\alpha_0=1$. Next,
  \begin{equation}
    \label{eq:basic_summed}
    \begin{aligned}
      T(T+1)(T+2)\E\lp f(x_T)-f^*\rp
      \leq{}&\frac{3}{2}\sum_{t=0}^{T-1}(t+1)(t+2)\eta_t
        \ls\E\norm{u_t-x^*}^2-\E\norm{u_{t+1}-x^*}^2\rs \\
      &+9\max\lb M,L_2\rb D^3\sum_{t=0}^{T-1}\frac{(t+1)(t+2)}{(t+3)^2}\\
      &+9\sigma_2D^2\sum_{t=0}^{T-1}\frac{(t+1)(t+2)}{t+3}
      +\frac{3\sigma_1^2}{2}\sum_{t=0}^{T-1}\frac{(t+1)(t+2)}{\eta_t}.
    \end{aligned}
  \end{equation}
  Let us consider the first sum in inequality above. 
  \begin{gather*}
    \sum_{t=0}^{T-1}(t+1)(t+2)\eta_t
      \ls\E\norm{u_t-x^*}^2-\E\norm{u_{t+1}-x^*}^2\rs \\
    =2\eta_0\E\norm{u_0-x^*}^2
    +\sum_{t=0}^{T-2}\ls(t+2)(t+3)\eta_{t+1}-(t+1)(t+2)\eta_t\rs
      \E\norm{u_{t+1}-x^*}^2\\
    -T(T+1)\eta_{T-1}\E\norm{u_T-x^*}^2 \\
    \leq D^2\lp2\eta_0+
      \sum_{t=0}^{T-2}\ls(t+2)(t+3)\eta_{t+1}-(t+1)(t+2)\eta_t\rs\rp \\
   =T(T+1)\eta_{T-1}D^2 \stackrel{\eqref{eq:basic_reg}}{=}\sigma_1D T(T+1)\sqrt T.
  \end{gather*}
  The remaining sums satisfy
  \begin{gather*}
    \sum_{t=0}^{T-1}\frac{(t+1)(t+2)}{(t+3)^2}\leq T,
    \qquad
    \sum_{t=0}^{T-1}\frac{(t+1)(t+2)}{t+3}\leq\sum_{t=0}^{T-1}(t+1)=\frac{T(T+1)}{2}, \\
    \sum_{t=0}^{T-1}\frac{(t+1)(t+2)}{\eta_t}
    \stackrel{\eqref{eq:basic_reg}}{=}\frac{D}{\sigma_1}\sum_{t=0}^{T-1}(t+2)\sqrt{t+1} 
    \leq\frac{D\sqrt T}{\sigma_1}\sum_{t=0}^{T-1}(t+2)
    =\frac{D\sqrt T T(T+3)}{2\sigma_1}.
  \end{gather*}
  Substituting into~\eqref{eq:basic_summed} and dividing by $T(T+1)(T+2)$, we obtain
  \begin{gather*}
    \E\lp f(x_T)-f^*\rp
    \leq\frac{9\max\lb M,L_2\rb D^3}{(T+1)(T+2)}
    +\frac{9\sigma_2D^2}{2(T+2)}
    +\frac{3\sigma_1D\sqrt T}{2(T+2)}
    +\frac{3\sigma_1D\sqrt T(T+3)}{4(T+1)(T+2)} \\
    =\frac{9\max\lb M,L_2\rb D^3}{(T+1)(T+2)}
    +\frac{9\sigma_2D^2}{2(T+2)}
    +\frac{3\sigma_1D\sqrt T(3T+5)}{4(T+1)(T+2)}.
  \end{gather*}
  Since $T(3T+5)\leq3(T+1)(T+2)$, this proves~\eqref{eq:basic_rate}.

  If $\sigma_1=0$, then $\eta_t=0$ and $g_t=\g(x_t)$. The gradient error and quadratic regularization terms in~\eqref{eq:basic_one_step} vanish. The same summation then proves~\eqref{eq:basic_rate} without its last term. 
\end{proof}

\subsection{Proof of Proposition~\ref{prop:weak_convergence_guarantee}}

\begin{proof}
  Let $\Delta_t=f(x_t)-f^*$. Applying Lemma~\ref{lem:basic_one_step} with $M=\eta_t=0$ and using Cauchy--Schwarz,~\eqref{eq:stoch_grad}, and~\eqref{eq:stoch_hess} gives
  \begin{equation*}
    \E\Delta_{t+1}
    \leq(1-\alpha_t)\E\Delta_t+\alpha_t\sigma_1D
    +\alpha_t^2\sigma_2D^2+\frac{L_2}{3}\alpha_t^3D^3.
  \end{equation*}
  Multiplying by $(t+1)(t+2)(t+3)$, substituting $\alpha_t=3/(t+3)$, and summing over $t=0,\ldots,T-1$ yields
  \begin{gather*}
    T(T+1)(T+2)\E\Delta_T\\
    \leq3\sigma_1D\sum_{t=0}^{T-1}(t+1)(t+2)
    +9\sigma_2D^2\sum_{t=0}^{T-1}\frac{(t+1)(t+2)}{t+3} 
    +9L_2D^3\sum_{t=0}^{T-1}\frac{(t+1)(t+2)}{(t+3)^2} \\
    \leq T(T+1)(T+2)\sigma_1D
    +\frac{9T(T+1)}{2}\sigma_2D^2+9TL_2D^3,
  \end{gather*}
  where the initial term vanishes since $\alpha_0=1$. Dividing by $T(T+1)(T+2)$ gives
  \begin{equation*}
    \E\lp f(x_T)-f^*\rp
    \leq\sigma_1D+\frac{9\sigma_2D^2}{2(T+2)}
    +\frac{9L_2D^3}{(T+1)(T+2)},
  \end{equation*}
  which proves~\eqref{eq:weak_convergence_guarantee}.
\end{proof}

\subsection{Proof of First-order Reduction}
\label{sec:proof_first_order}

\begin{proof}[Proof of Proposition~\ref{prop:first_order_reduction}]
By
$L_1$-smoothness and $x_{t+1}-x_t=\alpha_t(u_{t+1}-x_t)$,
\begin{equation*}
  f(x_{t+1})
  \leq f(x_t)+\alpha_t\la\g(x_t),u_{t+1}-x_t\ra
  +\frac{L_1\alpha_t^2}{2}\norm{u_{t+1}-x_t}^2.
\end{equation*}
Combining this inequality with
\eqref{eq:basic_model_comparison} at $x=x^*$, convexity of
$f$, and $\norm{u_{t+1}-x_t}\leq D$ gives
\eqref{eq:basic_one_step} with its Hessian-error and cubic
terms replaced by $L_1\alpha_t^2D^2/2$.

Suppose first that $\sigma_1>0$.
The conditional-unbiasedness and Young's inequality argument
leading to~\eqref{eq:basic_expected_step} is unchanged.
Repeating the weighted summation and the bounds following
\eqref{eq:basic_summed}, the gradient-noise and regularization
terms contribute at most $9\sigma_1D/(4\sqrt T)$. The remaining
smoothness term gives
\begin{equation*}
    \E\lp f(x_T)-f^*\rp
    \leq \frac{9\sigma_1D}{4\sqrt T}
      +\frac{9L_1D^2}{2T(T+1)(T+2)}
        \sum_{t=0}^{T-1}\frac{(t+1)(t+2)}{t+3}
    \leq \frac{9\sigma_1D}{4\sqrt T}
      +\frac{9L_1D^2}{4(T+2)},
\end{equation*}
where the last inequality uses
$\sum_{t=0}^{T-1}(t+1)(t+2)/(t+3)\leq T(T+1)/2$.
If $\sigma_1=0$, then $\eta_t=0$ and $g_t=\g(x_t)$. The model subproblem is then linear, the gradient-error
and regularization terms vanish, and the same summation proves
the bound without its first term.
\end{proof}

\section{Solving the subproblem}
\label{sec:subproblem_solution}

In this section, we discuss how to solve the subproblem of the contracting step.

\subsection{General feasible set}

The subproblem is the minimization of the model $\omega^M_{t}(u)$ from \eqref{eq:cubic_model} over the set $\XCal$:
\begin{equation}
    \label{eq:subproblem}
    u_{t+1} = \argmin_{u\in \XCal} \lb \la g(x_{t}),u-x_{t}\ra 
          +\tfrac{\alpha_t}{2}\la H(x_{t})(u-x_{t}),u-x_{t}\ra\\
           + \tfrac{\eta_t
          }{2}\|u - u_{t}\|^2 + \tfrac{M \alpha_t^2}{6}\norm{u-x_{t}}^3 \rb
\end{equation}
 $u_{t+1}$ is characterized by the optimality condition
\begin{equation}
    \label{eq:subproblem_optimality}
    \begin{aligned}
    &\left\langle
    g(x_{t})
          +\alpha_t  H(x_{t})(u_{t+1}-x_{t})
            + \eta_t
          (u_{t+1} - u_{t})\right.\\
    &\qquad\left.
          + \frac{M \alpha_t^2}{2}\norm{u_{t+1}-x_{t}}(u_{t+1}-x_t), y-u_{t+1}\right\rangle
          \geq 0, \qquad \forall y\in\XCal.
    \end{aligned}
\end{equation}
Writing $u_{t+1}-u_t = (u_{t+1}-x_t) + (x_t-u_t)$, we can rewrite \eqref{eq:subproblem_optimality} as
\begin{equation}
    \label{eq:subproblem_optimality_V2}
    \la g(x_{t}) + \eta_t
          (x_{t} - u_{t})
          +\ls \alpha_t  H(x_{t})
           + \eta_t I\rs
          (u_{t+1} - x_{t}) + \tfrac{M \alpha_t^2}{2}\norm{u_{t+1}-x_{t}}(u_{t+1}-x_t), y-u_{t+1} \ra \geq 0,
\end{equation}
This is exactly the optimality condition of the problem
\begin{equation}
    \label{eq:subproblem_analog}
    \begin{aligned}
    u_{t+1} = \argmin_{u\in \XCal} \left\{\right.&\la g(x_{t}) + \eta_t
          (x_{t} - u_{t}),u-x_{t}\ra 
          +\frac{1}{2}\la (\alpha_t  H(x_{t})
           + \eta_t I)(u-x_{t}),u-x_{t}\ra\\
          &\left.+ \frac{M \alpha_t^2}{6}\norm{u-x_{t}}^3 \right\}.
    \end{aligned}
\end{equation}
The same conclusion also follows directly from the identity
$\tfrac{\eta_t}{2}\norm{u-u_t}^2 = \tfrac{\eta_t}{2}\norm{u-x_t}^2 + \eta_t\langle x_t-u_t, u-x_t\rangle + \tfrac{\eta_t}{2}\norm{x_t-u_t}^2$:
the objectives of \eqref{eq:subproblem} and \eqref{eq:subproblem_analog} differ only by a constant.

Thus, \eqref{eq:subproblem} is a classical cubic regularized Newton subproblem on $\XCal$. It has the shifted gradient $g(x_t)+\eta_t(x_t-u_t)$, the Hessian $\alpha_t H(x_t)+\eta_t I$, and the regularization parameter $M\alpha_t^2$. Therefore, solving \eqref{eq:subproblem} has the same complexity as solving the classical cubic subproblem on $\XCal$. Moreover, the additional term $\eta_t I$ makes the quadratic part strongly convex, which can only improve the conditioning of the problem. 

\subsection{Euclidean ball}

Now we consider the case where $\XCal$ is a Euclidean ball of diameter $D$ centered at $x_c$, that is, $\XCal = \{x : \norm{x-x_c}\leq \tfrac{D}{2}\}$. We write the constraint in the smooth form $\norm{u-x_c}^2 \leq \tfrac{D^2}{4}$, so \eqref{eq:subproblem} becomes
\begin{equation}
    \label{eq:subproblem_ball}
    u_{t+1} = \argmin_{u:\, \norm{u-x_c}^2 \leq D^2/4} \omega^M_{t}(u).
\end{equation}
For $\lambda \geq 0$, the Lagrangian function is
\begin{equation}
    \label{eq:lagrangian_ball}
    L(u,\lambda) = \omega^M_{t}(u) + \lambda\Big(\norm{u-x_c}^2 - \tfrac{D^2}{4}\Big).
\end{equation}
Problem \eqref{eq:subproblem_ball} is convex, and Slater's condition holds. Therefore, $u_{t+1}$ is optimal if and only if there exists $\lambda$ such that the KKT conditions hold:
\begin{align}
    & g(x_t) + \alpha_t H(x_t)(u_{t+1}-x_t) + \eta_t(u_{t+1}-u_t)  + \tfrac{M\alpha_t^2}{2}\norm{u_{t+1}-x_t}(u_{t+1}-x_t) + 2\lambda(u_{t+1}-x_c) = 0,
      \label{eq:subproblem_optimality_ball}\\
    & \lambda\Big(\norm{u_{t+1}-x_c}^2 - \tfrac{D^2}{4}\Big) = 0, \qquad
      \lambda \geq 0, \qquad
      \norm{u_{t+1}-x_c} \leq \tfrac{D}{2}.
      \label{eq:subproblem_optimality_ball2}
\end{align}
For a fixed $\lambda \geq 0$, let $u(\lambda)$ be the unconstrained minimizer of $L(\cdot,\lambda)$. In other words, $u(\lambda)$ solves \eqref{eq:subproblem_optimality_ball} with $u_{t+1}$ replaced by $u(\lambda)$. Let $r = \norm{u(\lambda)-x_t}$. Rearranging \eqref{eq:subproblem_optimality_ball}, we get
\begin{equation}
    \label{eq:u_lambda}
    u(\lambda) = x_t - \Big(\alpha_t H(x_t) + \big(\eta_t + 2\lambda + \tfrac{M\alpha_t^2}{2}r\big) I\Big)^{-1}
    \big(g(x_t) + \eta_t(x_t-u_t) + 2\lambda(x_t-x_c)\big).
\end{equation}
Since $r$ depends on $u(\lambda)$, computing $u(\lambda)$ requires a one-dimensional search over $r \geq 0$. This is exactly the same procedure as for the unconstrained cubic subproblem.

We solve \eqref{eq:subproblem_ball} in two steps. First, we compute $u(0)$, which is the solution of the unconstrained problem. If $\norm{u(0)-x_c} \leq \tfrac{D}{2}$, then $u_{t+1} = u(0)$. Otherwise, the constraint is active, and we search for $\lambda > 0$ such that $\norm{u(\lambda)-x_c} = \tfrac{D}{2}$. Such a $\lambda$ always exists and can be found by bisection. The reason is that the function $\lambda \mapsto \norm{u(\lambda)-x_c}$ is continuous and non-increasing, and $u(\lambda) \to x_c$ as $\lambda \to \infty$.

Hence, the ball-constrained subproblem requires two nested one-dimensional searches: an outer search over $\lambda$ and an inner search over $r$. This matches the complexity of the classical cubic regularized Newton method on the same set. Moreover, our method can also be used with $M = 0$. In this case, the matrix in \eqref{eq:u_lambda} does not depend on $r$, so the inner search is not needed. The unconstrained step then requires a single linear solve, and the ball-constrained step becomes a trust-region-type subproblem with only one search over $\lambda$.

\section{Proof of Accelerated Method}
\label{sec:proof_optimal}

We restate Algorithm~\ref{alg:optimal_method} as Algorithm~\ref{alg:optimal_method_appendix}, including the weights at $t=1$. For $t\geq1$, the parameters are

\begin{equation}
  \label{eq:optimal_reg}
  \lambda_t=\frac{\sigma_1t^3}{D^2}+\frac{\sigma_2t^{3/2}}{D}+2M,
  \qquad \alpha_t=\frac{4}{t+3},
  \qquad \beta_t=\frac{\lambda_tD}{t^{3/2}}.
\end{equation}
The cubic model is
\begin{equation}
  \label{eq:optimal_model}
    \Omega_t(u)= \la g_t,u-z_{t-1}\ra
    +\frac{\alpha_t}{2}\la H_t(u-z_{t-1}),u-z_{t-1}\ra
    +\frac{\alpha_t\beta_t}{2}\norm{u-z_{t-1}}^2
    +\frac{\lambda_t\alpha_t^2}{3}\norm{u-z_{t-1}}^3.
\end{equation}

\begin{algorithm}[H]
  \caption{Accelerated Stochastic Contracting Cubic Newton}
  \label{alg:optimal_method_appendix}
  \begin{algorithmic}[1]
    \STATE \textbf{Input:} $w_0=z_0=x_0\in\XCal$, $A_0=0$, $M>0$ with $M\geq L_2$.
    \STATE \textbf{Parameters:} $\lambda_t,\alpha_t,\beta_t$ given by~\eqref{eq:optimal_reg}.
    \FOR{$t=1,2,\ldots$}
      \STATE Set $v_t=(1-\alpha_t)w_{t-1}+\alpha_tz_{t-1}$.
      \STATE Sample $\xi_t$ and set $g_t=g(v_t,\xi_t)$, $H_t=H(v_t,\xi_t)$.
      \STATE Compute
      \begin{equation}
        \label{eq:optimal_predictor}
        u_t=\argmin_{u\in\XCal}\Omega_t(u),
        \qquad y_t=(1-\alpha_t)w_{t-1}+\alpha_tu_t.
      \end{equation}
      \STATE Set
      \begin{equation}
        \label{eq:optimal_damping}
        \tau_t=\beta_t+\lambda_t\norm{y_t-v_t}.
      \end{equation}
      \STATE Set
      \begin{equation}
        \label{eq:optimal_weights}
        a_t=
        \begin{cases}
          \frac{\lambda_1}{\tau_1}, & t=1,\\[4pt]
          \min\lb\frac{\alpha_tA_{t-1}}{1-\alpha_t},
                    \frac{\lambda_t}{\alpha_t\tau_t}\rb, & t\geq2,
        \end{cases}
        \qquad A_t=A_{t-1}+a_t.
      \end{equation}
      \STATE Sample $\widetilde\xi_t$ and set $\widetilde g_t=g(y_t,\widetilde\xi_t)$.
      \STATE Set
      \begin{equation}
        \label{eq:optimal_updates}
        \begin{aligned}
          w_t&=\frac{A_{t-1}w_{t-1}+a_tu_t}{A_t},\\
          z_t&=\argmin_{z\in\XCal}\lb\la\widetilde g_t,z\ra
          +\frac{\lambda_t}{2a_t}\norm{z-z_{t-1}}^2\rb.
        \end{aligned}
      \end{equation}
    \ENDFOR
  \end{algorithmic}
\end{algorithm}

For the analysis, set $\lambda_0=\lambda_1$. Since $\alpha_1=1$, the first iteration satisfies
\begin{equation}
  \label{eq:optimal_initialization}
    v_1 =x_0,\qquad y_1=w_1=u_1, \qquad
    \tau_1=\beta_1+\lambda_1\norm{u_1-x_0},
    \qquad a_1=A_1=\frac{\lambda_1}{\tau_1}.
\end{equation}

Let $\mathcal F_{t-1}$ and $\mathcal G_t$ denote the histories before sampling $\xi_t$ and $\widetilde\xi_t$, respectively. Assume that $g_t$ and $H_t$ satisfy Assumptions~\ref{as:stoch_grad} and~\ref{as:stoch_hess}, respectively, and that $\widetilde g_t$ satisfies Assumptions~\ref{as:stoch_grad} and~\ref{as:unbiased_grad}. By these assumptions and fresh sampling, we have
\begin{gather*}
  \E\lp\norm{g_t-\g(v_t)}^2\mid\mathcal F_{t-1}\rp\leq\sigma_1^2,
  \qquad \E\lp\norm{H_t-\h(v_t)}^2\mid\mathcal F_{t-1}\rp\leq\sigma_2^2,\\
  \E\lp\widetilde g_t\mid\mathcal G_t\rp=\g(y_t),
  \qquad \E\lp\norm{\widetilde g_t-\g(y_t)}^2\mid\mathcal G_t\rp\leq\sigma_1^2.
\end{gather*}

For $t\geq1$, let $s_t=y_t-v_t$.

\begin{lemma}
  \label{lem:optimal_geometry}
  For the iterates of Algorithm~\ref{alg:optimal_method_appendix} and every $t\geq1$,
  \begin{equation}
    \label{eq:optimal_geometry}
    s_t=\alpha_t(u_t-z_{t-1}),
    \qquad \norm{s_t}\leq\alpha_tD.
  \end{equation}
  The weights satisfy
  \begin{equation}
    \label{eq:optimal_cap}
    0<a_t\leq\alpha_tA_t,
    \qquad 0<a_t\alpha_t\tau_t\leq\lambda_t.
  \end{equation}
  Consequently,
  \begin{equation}
    \label{eq:optimal_averaging}
    w_t=\ls1-\frac{a_t}{\alpha_tA_t}\rs w_{t-1}
    +\frac{a_t}{\alpha_tA_t}y_t.
  \end{equation}
  Moreover,
  \begin{equation}
    \label{eq:optimal_model_condition}
    \la g_t+H_ts_t+\tau_ts_t,x-u_t\ra\geq0,
    \quad x\in\XCal.
  \end{equation}
\end{lemma}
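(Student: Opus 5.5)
The plan is to verify the four claims of Lemma~\ref{lem:optimal_geometry} in order, each by direct substitution into the updates of Algorithm~\ref{alg:optimal_method_appendix}.

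\textbf{Geometry~\eqref{eq:optimal_geometry}.} From the definitions,
\[
y_t-v_t=\bigl((1-\alpha_t)w_{t-1}+\alpha_tu_t\bigr)-\bigl((1-\alpha_t)w_{t-1}+\alpha_tz_{t-1}\bigr)=\alpha_t(u_t-z_{t-1}).
\]
Since $u_t,z_{t-1}\in\XCal$, we get $\norm{s_t}\leq\alpha_tD$. Membership $z_{t-1}\in\XCal$ holds because $z_0=x_0\in\XCal$ and every $z_t$ is an argmin over $\XCal$.

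\textbf{Weight bounds~\eqref{eq:optimal_cap}.} Positivity comes first. We have $\lambda_t\geq 2M>0$ and $\tau_t\geq\beta_t>0$, so $a_1>0$; inductively $A_{t-1}>0$, hence $a_t>0$ for $t\geq2$, using $\alpha_t<1$ for $t\geq2$.

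For $t=1$: $\alpha_1=1$ and $A_1=a_1$, so $a_1\leq\alpha_1A_1$ holds with equality, and $a_1\alpha_1\tau_1=\lambda_1$.

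For $t\geq2$: the inequality $a_t\leq\alpha_tA_{t-1}/(1-\alpha_t)$ is equivalent to $(1-\alpha_t)a_t\leq\alpha_tA_{t-1}$, i.e.\ $a_t\leq\alpha_t(A_{t-1}+a_t)=\alpha_tA_t$. The inequality $a_t\leq\lambda_t/(\alpha_t\tau_t)$ gives $a_t\alpha_t\tau_t\leq\lambda_t$ directly.

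\textbf{Averaging~\eqref{eq:optimal_averaging}.} Solve the contraction step for $u_t$, namely $u_t=\bigl(y_t-(1-\alpha_t)w_{t-1}\bigr)/\alpha_t$, and substitute into $w_t=(A_{t-1}w_{t-1}+a_tu_t)/A_t$. Collecting the coefficient of $w_{t-1}$ gives
\[
\frac{A_{t-1}-a_t(1-\alpha_t)/\alpha_t}{A_t}=\frac{\alpha_tA_t-a_t}{\alpha_tA_t}=1-\frac{a_t}{\alpha_tA_t}.
\]
The coefficient of $y_t$ is $a_t/(\alpha_tA_t)$. By~\eqref{eq:optimal_cap} this coefficient lies in $(0,1]$, so $w_t$ is a genuine convex combination of $w_{t-1}$ and $y_t$ and remains in $\XCal$.

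\textbf{Optimality condition~\eqref{eq:optimal_model_condition}.} This is the only step needing care. $\Omega_t$ is convex because $H_t\succeq0$ by Assumption~\ref{as:stoch_hess}, and it is differentiable, since the cubic term $\norm{\cdot}^3$ is $C^1$ with gradient $3\norm{h}h$. Hence the first-order condition at $u_t$ reads $\la\nabla\Omega_t(u_t),x-u_t\ra\geq0$ for all $x\in\XCal$. With $h=u_t-z_{t-1}=s_t/\alpha_t$,
\[
\nabla\Omega_t(u_t)=g_t+\alpha_tH_th+\alpha_t\beta_th+\lambda_t\alpha_t^2\norm{h}h=g_t+H_ts_t+\beta_ts_t+\lambda_t\norm{s_t}s_t,
\]
using $\alpha_t^2\norm{h}h=\norm{s_t}s_t$. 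By~\eqref{eq:optimal_damping}, $\tau_t=\beta_t+\lambda_t\norm{s_t}$, so this equals $g_t+H_ts_t+\tau_ts_t$, which gives~\eqref{eq:optimal_model_condition}.

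The only real bookkeeping risk is in the two scalings: the cubic coefficient $\lambda_t\alpha_t^2/3$ must produce exactly $\lambda_t\norm{s_t}s_t$ after the change of variables, and the case $t=1$ must be handled separately. Neither should be a genuine obstacle.
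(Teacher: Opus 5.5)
Your proposal is correct and follows the paper's proof essentially step for step. It uses the same direct substitution for $s_t$, the same rearrangement $(1-\alpha_t)a_t\leq\alpha_tA_{t-1}\iff a_t\leq\alpha_tA_t$ with $t=1$ handled separately, the same elimination of $u_t$ to obtain the averaging identity, and the same computation of $\nabla\Omega_t(u_t)=g_t+H_ts_t+\tau_ts_t$. Your inductive positivity argument and the remark that the cubic term is $C^1$ only make explicit what the paper leaves implicit. Convexity of $\Omega_t$ is not actually needed, since the first-order condition holds at any minimizer of a differentiable function over a convex set.
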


\begin{proof}
  By the definitions of $v_t$ and $y_t$,
  \begin{gather*}
    s_t=(1-\alpha_t)w_{t-1}+\alpha_tu_t
    -\ls(1-\alpha_t)w_{t-1}+\alpha_tz_{t-1}\rs
    =\alpha_t(u_t-z_{t-1}),\\
    \norm{s_t}=\alpha_t\norm{u_t-z_{t-1}}\leq\alpha_tD.
  \end{gather*}
  For $t\geq2$,~\eqref{eq:optimal_weights} gives $a_t>0$ and
  \begin{gather*}
    a_t(1-\alpha_t)\leq\alpha_tA_{t-1},
    \qquad a_t\alpha_t\tau_t\leq\lambda_t,
    \qquad a_t=a_t(1-\alpha_t)+\alpha_ta_t
    \leq\alpha_t(A_{t-1}+a_t)=\alpha_tA_t.
  \end{gather*}
  At $t=1$,~\eqref{eq:optimal_initialization} gives $a_1=\alpha_1A_1>0$ and $a_1\alpha_1\tau_1=\lambda_1$. Thus~\eqref{eq:optimal_cap} holds for every $t\geq1$. By~\eqref{eq:optimal_updates} and~\eqref{eq:optimal_predictor},
  \begin{equation*}
    A_tw_t=A_{t-1}w_{t-1}+a_tu_t
    =(A_t-a_t)w_{t-1}
    +\frac{a_t}{\alpha_t}\ls y_t-(1-\alpha_t)w_{t-1}\rs
    =\ls A_t-\frac{a_t}{\alpha_t}\rs w_{t-1}
    +\frac{a_t}{\alpha_t}y_t,
  \end{equation*}
  which proves~\eqref{eq:optimal_averaging}. Finally,
  \begin{gather*}
    \nabla\Omega_t(u_t)
    \stackrel{\eqref{eq:optimal_model}}{=}
    g_t+\alpha_tH_t(u_t-z_{t-1})
    +\alpha_t\beta_t(u_t-z_{t-1})
    +\lambda_t\alpha_t^2\norm{u_t-z_{t-1}}(u_t-z_{t-1})\\
    =g_t+H_ts_t+\ls\beta_t+\lambda_t\norm{s_t}\rs s_t
    \stackrel{\eqref{eq:optimal_damping}}{=}g_t+H_ts_t+\tau_ts_t.
  \end{gather*}
  Optimality of $u_t$ over $\XCal$ gives~\eqref{eq:optimal_model_condition}.
\end{proof}

Define the potential
\begin{equation}
  \label{eq:optimal_energy}
  E_t=A_t(f(w_t)-f^*)+\frac{\lambda_t}{2}\norm{z_t-x^*}^2,
  \quad t\geq0.
\end{equation}
Since $A_0 = 0, ~\lambda_0 = \lambda_1$, and $z_0 = x_0$ 
\begin{equation*}
  E_0=\frac{\lambda_1}{2}\norm{x_0-x^*}^2\leq\frac{\lambda_1D^2}{2}.
\end{equation*}

For every integer $T\geq1$, the expected objective gap satisfies
\begin{equation}
  \label{eq:goal}
  \E\lp f(w_T)-f^*\rp
  \stackrel{\eqref{eq:optimal_energy}}{\leq}\E\lp E_TA_T^{-1}\rp
  \leq
  \ls\E E_T^2\rs^{1/2}\ls\E A_T^{-2}\rs^{1/2}.
\end{equation}
Since $A_T$ is random, we bound the second moments of both factors. Our goal is to prove
\begin{equation*}
  \ls\E E_T^2\rs^{1/2}=\cO(\lambda_TD^2),
  \qquad
  \ls\E A_T^{-2}\rs^{1/2}=\cO\ls\frac{D}{T^{7/2}}\rs.
\end{equation*}

\subsection{One Step Progress}
\label{subsec:one_step}

We first establish a one-step bound for the potential and use it to control its second moment.

\begin{lemma}
  \label{lem:optimal_one_step}
  For every $t\geq1$,
  \begin{equation}
    \label{eq:optimal_one_step}
    \begin{aligned}
      E_t+\frac{a_t\tau_t}{8\alpha_t}\norm{s_t}^2
      \leq{}&E_{t-1}+\frac{D^2}{2}(\lambda_t-\lambda_{t-1})\\
      &+\min\lb
        \begin{aligned}
          &\frac{4}{\alpha_t^2\lambda_t}
          \ls\norm{H_t-\h(v_t)}^2+\frac{t^3}{D^2}\norm{g_t-\g(v_t)}^2\rs,\\
          &\frac{D}{\alpha_t}\norm{H_t-\h(v_t)}
          +\frac{t^{3/2}}{\alpha_t}\norm{g_t-\g(v_t)}
        \end{aligned}
      \rb\\
      &+\min\lb\frac{2t^3\norm{\widetilde g_t-\g(y_t)}^2}{\alpha_t^2\lambda_tD^2},
                  \frac{t^{3/2}\norm{\widetilde g_t-\g(y_t)}}{\alpha_t}\rb
      -a_t\la\widetilde g_t-\g(y_t),z_{t-1}-x^*\ra.
    \end{aligned}
  \end{equation}
\end{lemma}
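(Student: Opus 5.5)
The plan is to follow the Monteiro--Svaiter / Carmon et al.\ template. First reduce the change in the function-value part of $E_t$ to a single inner product with $\g(y_t)$. Then split that inner product between the two subproblems in lines~\ref{lin:cubic_sub_acc} and~\ref{lin:avg_sub_acc}, and absorb every error using the negative terms those subproblems produce.

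\textbf{Step 1 (reduction to $\g(y_t)$).} Write $\Delta(x)=f(x)-f^*$. By~\eqref{eq:optimal_averaging} and convexity, $A_t\Delta(w_t)\le (A_t-\tfrac{a_t}{\alpha_t})\Delta(w_{t-1})+\tfrac{a_t}{\alpha_t}\Delta(y_t)$. Since $a_t\le\alpha_tA_t$ by~\eqref{eq:optimal_cap}, the first coefficient is nonnegative. Next apply the gradient inequality at $y_t$ to the two points $w_{t-1}$ and $x^*$, with weights $1-\alpha_t$ and $\alpha_t$. Because $y_t-(1-\alpha_t)w_{t-1}-\alpha_tx^*=\alpha_t(u_t-x^*)$, this gives
\begin{equation*}
\Delta(y_t)\le(1-\alpha_t)\Delta(w_{t-1})+\alpha_t\la\g(y_t),u_t-x^*\ra .
\end{equation*}
Combining the two inequalities yields $A_t\Delta(w_t)\le A_{t-1}\Delta(w_{t-1})+a_t\la\g(y_t),u_t-x^*\ra$. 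I then split $u_t-x^*=(u_t-z_t)+(z_t-x^*)$.

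\textbf{Step 2 (the $z$-part).} The optimality condition of the prox step, strengthened by strong convexity, gives
\begin{equation*}
a_t\la\widetilde g_t,z_t-x^*\ra\le\tfrac{\lambda_t}{2}\ls\norm{z_{t-1}-x^*}^2-\norm{z_t-x^*}^2-\norm{z_t-z_{t-1}}^2\rs .
\end{equation*}
Replacing $\lambda_t\norm{z_{t-1}-x^*}^2$ by $\lambda_{t-1}\norm{z_{t-1}-x^*}^2+(\lambda_t-\lambda_{t-1})D^2$ produces the $\tfrac{D^2}{2}(\lambda_t-\lambda_{t-1})$ term. The remaining error is $a_t\la\g(y_t)-\widetilde g_t,z_t-x^*\ra$. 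I split it as the piece along $z_t-z_{t-1}$, plus $\ell_t=-a_t\la\widetilde g_t-\g(y_t),z_{t-1}-x^*\ra$, which is kept as is. The piece along $z_t-z_{t-1}$ is bounded in two ways. Young's inequality against a fraction (say $\tfrac{\lambda_t}{8}$) of $\norm{z_t-z_{t-1}}^2$ gives a bound of order $a_t^2\norm{\widetilde g_t-\g(y_t)}^2/\lambda_t$. Alternatively, Cauchy--Schwarz with $\norm{z_t-z_{t-1}}\le D$ gives $a_tD\norm{\widetilde g_t-\g(y_t)}$. In both cases I insert $a_t\le\lambda_t/(\alpha_t\tau_t)\le\lambda_t/(\alpha_t\beta_t)=t^{3/2}/(\alpha_tD)$, which produces exactly the two entries of the second $\min$.

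\textbf{Step 3 (the $u$-part).} Plug $x=z_t$ into~\eqref{eq:optimal_model_condition} to get
\begin{equation*}
a_t\la\g(y_t),u_t-z_t\ra\le a_t\la\g(y_t)-g_t-H_ts_t-\tau_ts_t,\,u_t-z_t\ra .
\end{equation*}
Write $u_t-z_t=s_t/\alpha_t+(z_{t-1}-z_t)$ by~\eqref{eq:optimal_geometry}. The $-\tau_ts_t$ against $s_t/\alpha_t$ gives the good term $-\tfrac{a_t\tau_t}{\alpha_t}\norm{s_t}^2$. The Taylor residual $\g(y_t)-\g(v_t)-\h(v_t)s_t$ is at most $\tfrac{L_2}{2}\norm{s_t}^2$ by Lemma~\ref{lem:taylor_remainder}. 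Because $\tau_t\ge\lambda_t\norm{s_t}\ge2M\norm{s_t}\ge2L_2\norm{s_t}$, it is absorbed by a fixed fraction of the good term, both against $s_t/\alpha_t$ and (after Young) against $z_{t-1}-z_t$. The cross term $a_t\tau_t\la s_t,z_t-z_{t-1}\ra$ is handled by Young: it is at most $c(a_t\tau_t)^2\norm{s_t}^2/\lambda_t+\tfrac{\lambda_t}{4c}\norm{z_t-z_{t-1}}^2$. The damping condition~\eqref{eq:damping_condition} gives $(a_t\tau_t)^2/\lambda_t\le a_t\tau_t/\alpha_t$, so this term is charged to the good term and the leftover $\norm{z_t-z_{t-1}}^2$ budget.

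The oracle errors $(\g(v_t)-g_t)+(\h(v_t)-H_t)s_t$ paired with $u_t-z_t$ are bounded in two ways, giving the first $\min$. The first way is Young against the remaining pieces of $\tfrac{a_t\tau_t}{\alpha_t}\norm{s_t}^2$ and $\lambda_t\norm{z_t-z_{t-1}}^2$, combined with $a_t\le t^{3/2}/(\alpha_tD)$ and $\norm{s_t}\le\alpha_tD$; this yields the $\tfrac{4}{\alpha_t^2\lambda_t}(\cdots)$ branch. The second way is Cauchy--Schwarz with $\norm{u_t-z_t}\le D$, using the same cap on $a_t$ and, for the Hessian part, $a_t\tau_t\norm{s_t}\cdot\alpha_t D\lesssim\lambda_t\norm{s_t}D$ together with $\tau_t\ge\lambda_t\norm{s_t}$; this yields the linear branch.

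\textbf{Main obstacle.} I expect the hardest part to be the constant bookkeeping. The single negative budget $\tfrac{a_t\tau_t}{\alpha_t}\norm{s_t}^2+\tfrac{\lambda_t}{2}\norm{z_t-z_{t-1}}^2$ must simultaneously absorb the Taylor residual, the $\tau_t$ cross term, the Hessian and gradient errors, and the $\widetilde g_t$ error, and still leave $\tfrac18\tfrac{a_t\tau_t}{\alpha_t}\norm{s_t}^2$. The cross-term Young step is tight because~\eqref{eq:damping_condition} only gives $(a_t\tau_t)^2/\lambda_t\le a_t\tau_t/\alpha_t$. My first attempt will allocate $\tfrac{3\lambda_t}{8}\norm{z_t-z_{t-1}}^2$ to the cross term, so that it consumes $\tfrac23$ of the good term. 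I will then check that the Taylor term ($\le\tfrac14$, using $M\ge L_2$) and the noise Young steps fit in what remains.
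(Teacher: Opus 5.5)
Your Steps 1 and 2 coincide with the paper's argument: convexity along $w_t$, the prox three-point inequality, and the $\tfrac{\lambda_t}{8}$ Young/Cauchy--Schwarz split for $\widetilde g_t$. So does your reduction to bounding $a_t\langle h,u_t-z_t\rangle-\tfrac{3\lambda_t}{8}\|z_t-z_{t-1}\|^2$ with $h=\nabla f(y_t)-g_t-H_ts_t-\tau_ts_t$. The gap is in Step 3. Absorbing the pieces of $h$ one at a time cannot leave any negative multiple of $G:=\tfrac{a_t\tau_t}{\alpha_t}\|s_t\|^2$, even with exact oracles. Let $r_t=\nabla f(y_t)-\nabla f(v_t)-\nabla^2 f(v_t)s_t$, so $\|r_t\|\le\tfrac{\tau_t}{4}\|s_t\|$. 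Your allocation spends $\tfrac23G$ on the cross term and $\tfrac14G$ on $\tfrac{a_t}{\alpha_t}\langle r_t,s_t\rangle$. That leaves $\tfrac1{12}G<\tfrac18G$, and no $\|z_t-z_{t-1}\|^2$ budget remains for $a_t\langle r_t,z_{t-1}-z_t\rangle$ or for the noise. No reallocation helps. Give the whole budget $\tfrac{\lambda_t}{2}\|z_t-z_{t-1}\|^2$ to the two deterministic pieces against $z_{t-1}-z_t$, split as $c_1+c_2=\tfrac12$ between $\tau_ts_t$ and $r_t$. Young plus the damping condition $a_t\alpha_t\tau_t\le\lambda_t$ then gives at best $(\tfrac1{4c_1}+\tfrac1{64c_2})G\ge\tfrac{25}{32}G$, and adding $\tfrac14G$ already exceeds $G$. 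The loss comes from pairing $r_t$ linearly with $s_t/\alpha_t$.

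The missing idea, which is what the paper does, is to keep $h$ in one piece and complete the square around $q:=h+\tau_ts_t=\nabla f(y_t)-g_t-H_ts_t$. Write $u_t-z_t=s_t/\alpha_t-(z_t-z_{t-1})$. One Young step against $\tfrac{3\lambda_t}{8}\|z_t-z_{t-1}\|^2$, plus the damping condition, bounds $a_t\langle h,u_t-z_t\rangle-\tfrac{3\lambda_t}{8}\|z_t-z_{t-1}\|^2$ by $\tfrac{a_t}{\alpha_t}\langle h,s_t\rangle+\tfrac{2a_t}{3\alpha_t\tau_t}\|h\|^2$, and
\[
\tfrac{a_t}{\alpha_t}\langle h,s_t\rangle+\tfrac{2a_t}{3\alpha_t\tau_t}\|h\|^2
=\tfrac{2a_t}{3\alpha_t\tau_t}\|q\|^2-\tfrac{a_t}{3\alpha_t}\langle q,s_t\rangle-\tfrac13G
\le\tfrac{a_t}{\alpha_t\tau_t}\|q\|^2-\tfrac14G .
\]
The cross term from expanding $\|q-\tau_ts_t\|^2$ cancels most of the direct $\tfrac{a_t}{\alpha_t}\langle q,s_t\rangle$, so $r_t$ enters only quadratically. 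Use $\|q\|^2\le2\|r_t\|^2+4\|H_t-\nabla^2f(v_t)\|^2\|s_t\|^2+4\|g_t-\nabla f(v_t)\|^2$ with $2\|r_t\|^2\le\tfrac{\tau_t^2}{8}\|s_t\|^2$. The Taylor part then costs exactly $\tfrac18G$, which leaves the required $\tfrac18G$ and produces the quadratic branch. Applying the same bound to $h=r_t-\tau_ts_t$ leaves $\tfrac3{16}G$; the oracle errors are then paired with $u_t-z_t$ via $\|u_t-z_t\|\le D$ to get the linear branch.

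There is also a secondary problem with the Hessian entry of the quadratic branch. There you need $\tfrac{a_t\|s_t\|^2}{\alpha_t\tau_t}\le\tfrac{\lambda_t\|s_t\|^2}{\alpha_t^2\tau_t^2}\le\tfrac{1}{\alpha_t^2\lambda_t}$, i.e.\ the damping condition together with $\tau_t\ge\lambda_t\|s_t\|$. The tools you list, $a_t\le t^{3/2}/(\alpha_tD)$ and $\|s_t\|\le\alpha_tD$, only give a coefficient of order $t^3/\lambda_t$. That is larger than $\tfrac{4}{\alpha_t^2\lambda_t}$ by a factor of order $t$ and would spoil the $\sigma_2$ term downstream.
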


\begin{proof}

By~\eqref{eq:optimal_cap}, $0<a_t/(\alpha_tA_t)\leq1$. Thus, by~\eqref{eq:optimal_averaging} and convexity,
\begin{gather*}
  A_tf(w_t)
  =A_tf\ls\ls1-\frac{a_t}{\alpha_tA_t}\rs w_{t-1}
  +\frac{a_t}{\alpha_tA_t}y_t\rs
  \leq\ls A_t-\frac{a_t}{\alpha_t}\rs f(w_{t-1})
  +\frac{a_t}{\alpha_t}f(y_t).
\end{gather*}
Using $A_t-A_{t-1}=a_t$, we obtain
\begin{gather*}
  A_t(f(w_t)-f^*)-A_{t-1}(f(w_{t-1})-f^*)\\
  \leq\ls A_t-\frac{a_t}{\alpha_t}-A_{t-1}\rs f(w_{t-1})
  +\frac{a_t}{\alpha_t}f(y_t)-a_tf^*\\
  =\frac{a_t}{\alpha_t}
  \ls f(y_t)-(1-\alpha_t)f(w_{t-1})-\alpha_tf^*\rs\\
  \leq\frac{a_t}{\alpha_t}
  \la\g(y_t),y_t-(1-\alpha_t)w_{t-1}-\alpha_tx^*\ra
  =a_t\la\g(y_t),u_t-x^*\ra.
\end{gather*}

  Hence,
  \begin{equation}
    \label{eq:optimal_convexity}
    A_t(f(w_t)-f^*)\leq A_{t-1}(f(w_{t-1})-f^*)
    +a_t\la\g(y_t),u_t-x^*\ra.
  \end{equation}

  The optimality condition for~\eqref{eq:optimal_updates}, multiplied by $a_t>0$, gives
  \begin{equation*}
    \la a_t\widetilde g_t+\lambda_t(z_t-z_{t-1}),x^*-z_t\ra\geq0.
  \end{equation*}
  Since
  \begin{equation*}
    \norm{z_{t-1}-x^*}^2
    =\norm{z_t-x^*}^2+\norm{z_t-z_{t-1}}^2
    -2\la z_t-z_{t-1},z_t-x^*\ra,
  \end{equation*}
  we obtain
  \begin{gather*}
    a_t\la\widetilde g_t,z_t-x^*\ra
    \leq-\lambda_t\la z_t-z_{t-1},z_t-x^*\ra
    =\frac{\lambda_t}{2}
    \ls\norm{z_{t-1}-x^*}^2-\norm{z_t-x^*}^2-\norm{z_t-z_{t-1}}^2\rs.
  \end{gather*}
  Rearranging,
  \begin{equation}
    \label{eq:three_dist_bound}
    \frac{\lambda_t}{2}
    \ls \norm{z_t-x^*}^2 - \norm{z_{t-1}-x^*}^2 \rs \leq a_t\la\widetilde g_t,x^* - z_t \ra -\frac{\lambda_t}{2}\norm{z_t-z_{t-1}}^2.
  \end{equation}
  Combining this bound with~\eqref{eq:optimal_convexity}, $\lambda_t\geq\lambda_{t-1}$, and $\norm{z_{t-1}-x^*}\leq D$, we obtain
  \begin{gather*}
    E_t-E_{t-1}
    \stackrel{\eqref{eq:optimal_energy}}{=}
    A_t(f(w_t)-f^*)-A_{t-1}(f(w_{t-1})-f^*)\\
    +\frac{\lambda_t}{2}\ls\norm{z_t-x^*}^2-\norm{z_{t-1}-x^*}^2\rs
    +\frac{\lambda_t-\lambda_{t-1}}{2}\norm{z_{t-1}-x^*}^2\\
    \stackrel{\eqref{eq:optimal_convexity}}{\leq}a_t\la\g(y_t),u_t-x^*\ra
    +\frac{\lambda_t}{2}\ls\norm{z_t-x^*}^2-\norm{z_{t-1}-x^*}^2\rs\\
    +\frac{\lambda_t-\lambda_{t-1}}{2}\norm{z_{t-1}-x^*}^2 \\
    \stackrel{\eqref{eq:three_dist_bound}}{\leq} 
    a_t\la\g(y_t),u_t-z_t\ra
    -a_t\la\widetilde g_t-\g(y_t),z_t-x^*\ra 
    -\frac{\lambda_t}{2}\norm{z_t-z_{t-1}}^2
    + \frac{D^2}{2}(\lambda_t-\lambda_{t-1}).
  \end{gather*}
  Applying~\eqref{eq:optimal_model_condition} at $x=z_t$ gives
  \begin{equation*}
    \la\g(y_t),u_t-z_t\ra
    \leq\la\g(y_t)-g_t-H_ts_t-\tau_ts_t,u_t-z_t\ra.
  \end{equation*}
  Consequently,
  \begin{equation}
    \label{eq:optimal_energy_projection}
    \begin{aligned}
      E_t-E_{t-1}\leq{}&\frac{D^2}{2}(\lambda_t-\lambda_{t-1})
      -\frac{\lambda_t}{2}\norm{z_t-z_{t-1}}^2\\
      &+a_t\la\g(y_t)-g_t-H_ts_t-\tau_ts_t,u_t-z_t\ra
      -a_t\la\widetilde g_t-\g(y_t),z_t-x^*\ra.
    \end{aligned}
  \end{equation}

  We first bound the term containing $\widetilde g_t-\g(y_t)$. Splitting $z_t-x^*=(z_{t-1}-x^*)+(z_t-z_{t-1})$, we get
  \begin{gather*}
    -a_t\la\widetilde g_t-\g(y_t),z_t-x^*\ra
    -\frac{\lambda_t}{8}\norm{z_t-z_{t-1}}^2\\
    =-a_t\la\widetilde g_t-\g(y_t),z_{t-1}-x^*\ra
    -a_t\la\widetilde g_t-\g(y_t),z_t-z_{t-1}\ra
    -\frac{\lambda_t}{8}\norm{z_t-z_{t-1}}^2.
  \end{gather*}
  Young's inequality and $\norm{z_t-z_{t-1}}\leq D$ give, respectively,
  \begin{gather*}
    -a_t\la\widetilde g_t-\g(y_t),z_t-z_{t-1}\ra
    -\frac{\lambda_t}{8}\norm{z_t-z_{t-1}}^2
    \leq\frac{2a_t^2}{\lambda_t}\norm{\widetilde g_t-\g(y_t)}^2,\\
    -a_t\la\widetilde g_t-\g(y_t),z_t-z_{t-1}\ra
    -\frac{\lambda_t}{8}\norm{z_t-z_{t-1}}^2
    \leq a_tD\norm{\widetilde g_t-\g(y_t)}.
  \end{gather*}
  Taking the smaller of these two bounds, we obtain
  \begin{equation}
    \label{eq:optimal_fresh_gradient_error}
    \begin{aligned}
      &-a_t\langle \widetilde g_t  -\g(y_t),z_t-x^*\rangle
      -\frac{\lambda_t}{8}\norm{z_t-z_{t-1}}^2\\
      &\leq-a_t\la\widetilde g_t-\g(y_t),z_{t-1}-x^*\ra\\
      &\quad+\min\lb\frac{2a_t^2}{\lambda_t}\norm{\widetilde g_t-\g(y_t)}^2,
                  a_tD\norm{\widetilde g_t-\g(y_t)}\rb.
    \end{aligned}
  \end{equation}

  Applying~\eqref{eq:optimal_fresh_gradient_error} in~\eqref{eq:optimal_energy_projection}, we obtain
  \begin{gather*}
    E_t-E_{t-1}
    \leq\frac{D^2}{2}(\lambda_t-\lambda_{t-1})
    +a_t\la\g(y_t)-g_t-H_ts_t-\tau_ts_t,u_t-z_t\ra
    -\frac{3\lambda_t}{8}\norm{z_t-z_{t-1}}^2\\
    -a_t\la\widetilde g_t-\g(y_t),z_{t-1}-x^*\ra
    +\min\lb\frac{2a_t^2}{\lambda_t}\norm{\widetilde g_t-\g(y_t)}^2,
                a_tD\norm{\widetilde g_t-\g(y_t)}\rb.
  \end{gather*}
  It remains to bound
  \begin{equation}
    \label{eq:remaining_bound}
    a_t\la\g(y_t)-g_t-H_ts_t-\tau_ts_t,u_t-z_t\ra
    -\frac{3\lambda_t}{8}\norm{z_t-z_{t-1}}^2.
  \end{equation}
  We rewrite
  \begin{equation*}
    \g(y_t)-g_t-H_ts_t
    =\g(y_t)-\g(v_t)-\h(v_t)s_t
    -(H_t-\h(v_t))s_t-(g_t-\g(v_t)).
  \end{equation*}
  Since $s_t=y_t-v_t$, Lemma~\ref{lem:taylor_remainder} with $x=v_t$ and $y=y_t$, and using $\lambda_t\geq2M\geq2L_2$ and $\tau_t\geq\lambda_t\norm{s_t}$, we obtain
  \begin{equation}
    \label{eq:optimal_taylor}
    \norm{\g(y_t)-\g(v_t)-\h(v_t)s_t}
    \leq\frac{L_2}{2}\norm{s_t}^2
    \leq\frac{\lambda_t}{4}\norm{s_t}^2
    \leq\frac{\tau_t}{4}\norm{s_t}.
  \end{equation}

  For every vector $h\in\R^d$, Young's inequality and~\eqref{eq:optimal_cap} give
  \begin{equation}
    \label{eq:optimal_young_bound}
    \begin{aligned}
     a_t\la h,u_t-z_t\ra-\frac{3\lambda_t}{8}\norm{z_t-z_{t-1}}^2
       &\stackrel{\eqref{eq:optimal_geometry}}{=}
      \frac{a_t}{\alpha_t}\la h,s_t\ra-a_t\la h,z_t-z_{t-1}\ra
      -\frac{3\lambda_t}{8}\norm{z_t-z_{t-1}}^2\\
      &\leq
      \frac{a_t}{\alpha_t}\la h,s_t\ra+\frac{2a_t^2}{3\lambda_t}\norm{h}^2
      \stackrel{\eqref{eq:optimal_cap}}{\leq}
      \frac{a_t}{\alpha_t}\la h,s_t\ra+\frac{2a_t}{3\alpha_t\tau_t}\norm{h}^2.
    \end{aligned}
  \end{equation}
  Substituting $h=(h+\tau_ts_t)-\tau_ts_t$, we obtain
  \begin{equation}
    \label{eq:optimal_shift_bound}
    \begin{aligned}
      \frac{a_t}{\alpha_t}\la h,s_t\ra
      &+\frac{2a_t}{3\alpha_t\tau_t}\norm{h}^2
      =\frac{2a_t}{3\alpha_t\tau_t}\norm{h+\tau_ts_t}^2
      -\frac{a_t}{3\alpha_t}\la h+\tau_ts_t,s_t\ra
      -\frac{a_t\tau_t}{3\alpha_t}\norm{s_t}^2\\
      &\leq
      \ls\frac{2a_t}{3\alpha_t\tau_t}+\frac{a_t}{3\alpha_t\tau_t}\rs\norm{h+\tau_ts_t}^2
      +\ls\frac{a_t\tau_t}{12\alpha_t}-\frac{a_t\tau_t}{3\alpha_t}\rs\norm{s_t}^2
      =\frac{a_t}{\alpha_t\tau_t}\norm{h+\tau_ts_t}^2
      -\frac{a_t\tau_t}{4\alpha_t}\norm{s_t}^2.
    \end{aligned}
  \end{equation}
  Combining~\eqref{eq:optimal_young_bound} and~\eqref{eq:optimal_shift_bound} gives
  \begin{equation}
    \label{eq:optimal_square_bound}
    a_t\la h,u_t-z_t\ra-\frac{3\lambda_t}{8}\norm{z_t-z_{t-1}}^2
    \leq-\frac{a_t\tau_t}{4\alpha_t}\norm{s_t}^2
    +\frac{a_t}{\alpha_t\tau_t}\norm{h+\tau_ts_t}^2.
  \end{equation}

  We apply~\eqref{eq:optimal_square_bound} with two choices of $h$ to bound~\eqref{eq:remaining_bound}.
  \begin{itemize}
    \item Let $h=\g(y_t)-g_t-H_ts_t-\tau_ts_t$. Then~\eqref{eq:optimal_square_bound} gives
    \begin{gather*}
      a_t\la\g(y_t)-g_t-H_ts_t-\tau_ts_t,u_t-z_t\ra
      -\frac{3\lambda_t}{8}\norm{z_t-z_{t-1}}^2\\
      \leq-\frac{a_t\tau_t}{4\alpha_t}\norm{s_t}^2
      +\frac{a_t}{\alpha_t\tau_t}\norm{\g(y_t)-g_t-H_ts_t}^2.
    \end{gather*}
    By Young's inequality we have
    \begin{gather*}
      \norm{\g(y_t)-g_t-H_ts_t}^2\\
      =\norm{\g(y_t)-\g(v_t)-\h(v_t)s_t
      -(H_t-\h(v_t))s_t-(g_t-\g(v_t))}^2\\
      \leq
      2\norm{\g(y_t)-\g(v_t)-\h(v_t)s_t}^2
      +4\norm{H_t-\h(v_t)}^2\norm{s_t}^2
      +4\norm{g_t-\g(v_t)}^2\\
      \stackrel{\eqref{eq:optimal_taylor}}{\leq}
      \frac{\tau_t^2}{8}\norm{s_t}^2
      +4\norm{H_t-\h(v_t)}^2\norm{s_t}^2
      +4\norm{g_t-\g(v_t)}^2.
    \end{gather*}
    Substituting this bound gives
    \begin{equation}
      \label{eq:optimal_quadratic_error}
      \begin{aligned}
        &a_t\la\g(y_t)-g_t-H_ts_t-\tau_ts_t,u_t-z_t\ra
        -\frac{3\lambda_t}{8}\norm{z_t-z_{t-1}}^2\\
        &\leq-\frac{a_t\tau_t}{8\alpha_t}\norm{s_t}^2
        +\frac{4a_t}{\alpha_t\tau_t}
        \ls\norm{H_t-\h(v_t)}^2\norm{s_t}^2+\norm{g_t-\g(v_t)}^2\rs.
      \end{aligned}
    \end{equation}

    \item Let $h=\g(y_t)-\g(v_t)-\h(v_t)s_t-\tau_ts_t$. Then
    \begin{gather*}
      a_t\la h,u_t-z_t\ra-\frac{3\lambda_t}{8}\norm{z_t-z_{t-1}}^2
      \stackrel{\eqref{eq:optimal_square_bound}}{\leq}
      -\frac{a_t\tau_t}{4\alpha_t}\norm{s_t}^2
      +\frac{a_t}{\alpha_t\tau_t}\norm{\g(y_t)-\g(v_t)-\h(v_t)s_t}^2\\
      \stackrel{\eqref{eq:optimal_taylor}}{\leq}
      -\frac{a_t\tau_t}{4\alpha_t}\norm{s_t}^2
      +\frac{a_t}{\alpha_t\tau_t}\frac{\tau_t^2}{16}\norm{s_t}^2
      =-\frac{3a_t\tau_t}{16\alpha_t}\norm{s_t}^2
      \leq-\frac{a_t\tau_t}{8\alpha_t}\norm{s_t}^2.
    \end{gather*}
    Since
    \begin{equation*}
      \g(y_t)-g_t-H_ts_t-\tau_ts_t
      =h-(H_t-\h(v_t))s_t-(g_t-\g(v_t)),
    \end{equation*}
    and $\norm{u_t-z_t}\leq D$, we also have
    \begin{equation}
      \label{eq:optimal_linear_error}
      \begin{aligned}
        &a_t\la\g(y_t)-g_t-H_ts_t-\tau_ts_t,u_t-z_t\ra
        -\frac{3\lambda_t}{8}\norm{z_t-z_{t-1}}^2\\
        &\leq-\frac{a_t\tau_t}{8\alpha_t}\norm{s_t}^2
        +a_tD\ls\norm{H_t-\h(v_t)}\norm{s_t}+\norm{g_t-\g(v_t)}\rs.
      \end{aligned}
    \end{equation}
  \end{itemize}

  Taking the smaller right-hand side of~\eqref{eq:optimal_quadratic_error} and~\eqref{eq:optimal_linear_error}, and combining with~\eqref{eq:optimal_fresh_gradient_error} in~\eqref{eq:optimal_energy_projection}, we obtain
  \begin{equation}
    \label{eq:optimal_energy_combined}
    \begin{aligned}
      E_t+\frac{a_t\tau_t}{8\alpha_t}\norm{s_t}^2
      \leq{}&E_{t-1}+\frac{D^2}{2}(\lambda_t-\lambda_{t-1})\\
      &+\min\lb
        \begin{aligned}
          &\frac{4a_t}{\alpha_t\tau_t}
          \ls\norm{H_t-\h(v_t)}^2\norm{s_t}^2+\norm{g_t-\g(v_t)}^2\rs,\\
          &a_tD\ls\norm{H_t-\h(v_t)}\norm{s_t}+\norm{g_t-\g(v_t)}\rs
        \end{aligned}
      \rb\\
      &+\min\lb\frac{2a_t^2}{\lambda_t}\norm{\widetilde g_t-\g(y_t)}^2,
                  a_tD\norm{\widetilde g_t-\g(y_t)}\rb\\
      &-a_t\la\widetilde g_t-\g(y_t),z_{t-1}-x^*\ra.
    \end{aligned}
  \end{equation}
  Finally, we bound random coefficients including $\|s_t\|,~\|\tau_t\|,~a_t$  using $\tau_t\geq\lambda_t\norm{s_t}$ and $\tau_t\geq\beta_t$ . First,
  \begin{equation}
    \label{eq:optimal_weight_bound}
    a_t\stackrel{\eqref{eq:optimal_cap}}{\leq}
    \frac{\lambda_t}{\alpha_t\tau_t}
    \leq\frac{\lambda_t}{\alpha_t\beta_t}
    \stackrel{\eqref{eq:optimal_reg}}{=}
    \frac{\lambda_t}{\alpha_t(\lambda_tD/t^{3/2})}
    =\frac{t^{3/2}}{\alpha_tD}.
  \end{equation}
  Consequently,
  \begin{gather*}
    \frac{a_t\norm{s_t}^2}{\alpha_t\tau_t}
    \stackrel{\eqref{eq:optimal_cap}}{\leq}
    \frac{\lambda_t\norm{s_t}^2}{\alpha_t^2\tau_t^2}
    =\frac{1}{\alpha_t^2\lambda_t}\ls\frac{\lambda_t\norm{s_t}}{\tau_t}\rs^2
    \leq\frac{1}{\alpha_t^2\lambda_t},\\
    \frac{a_t}{\alpha_t\tau_t}
    \stackrel{\eqref{eq:optimal_cap}}{\leq}
    \frac{\lambda_t}{\alpha_t^2\tau_t^2}
    \leq\frac{\lambda_t}{\alpha_t^2\beta_t^2}
    \stackrel{\eqref{eq:optimal_reg}}{=}
    \frac{\lambda_t}{\alpha_t^2(\lambda_tD/t^{3/2})^2}
    =\frac{t^3}{\alpha_t^2\lambda_tD^2},\\
    a_tD\norm{s_t}
    \stackrel{\eqref{eq:optimal_cap}}{\leq}
    \frac{\lambda_tD\norm{s_t}}{\alpha_t\tau_t}
    =\frac{D}{\alpha_t}\frac{\lambda_t\norm{s_t}}{\tau_t}
    \leq\frac{D}{\alpha_t},\qquad
    a_tD\stackrel{\eqref{eq:optimal_weight_bound}}{\leq}
    \frac{t^{3/2}}{\alpha_tD}D
    =\frac{t^{3/2}}{\alpha_t},\\
    \frac{2a_t^2}{\lambda_t}
    \stackrel{\eqref{eq:optimal_weight_bound}}{\leq}
    \frac{2}{\lambda_t}\ls\frac{t^{3/2}}{\alpha_tD}\rs^2
    =\frac{2t^3}{\alpha_t^2\lambda_tD^2}.
  \end{gather*}
  Substituting these coefficient bounds into~\eqref{eq:optimal_energy_combined} proves~\eqref{eq:optimal_one_step}.
\end{proof}

\subsection{Potential Bounds}

For the two minimum terms in~\eqref{eq:optimal_one_step}, define
\begin{gather*}
  \delta_t=\min\lb
    \begin{aligned}
      &\frac{4}{\alpha_t^2\lambda_t}
      \ls\norm{H_t-\h(v_t)}^2+\frac{t^3}{D^2}\norm{g_t-\g(v_t)}^2\rs,\\
      &\frac{D}{\alpha_t}\norm{H_t-\h(v_t)}
      +\frac{t^{3/2}}{\alpha_t}\norm{g_t-\g(v_t)}
    \end{aligned}
  \rb\\
  +\min\lb
    \frac{2t^3}{\alpha_t^2\lambda_tD^2}\norm{\widetilde g_t-\g(y_t)}^2,
    \frac{t^{3/2}}{\alpha_t}\norm{\widetilde g_t-\g(y_t)}
  \rb.
\end{gather*}
\begin{lemma}
  \label{lem:optimal_error_moments}
  For every $t\geq1$ and integer $T\geq1$,
  \begin{equation}
    \label{eq:optimal_error_budget}
    \begin{aligned}
      \E\lp\delta_t\mid\mathcal F_{t-1}\rp
      &\leq\sigma_2D\sqrt t+2\sigma_1t^2,\\
      \sum_{i=1}^T\E\lp\delta_i\mid\mathcal F_{i-1}\rp
      &\leq2\lambda_TD^2.
    \end{aligned}
  \end{equation}
  Moreover,
  \begin{equation}
    \label{eq:optimal_error_second_moment}
    \E\lp\delta_t^2\mid\mathcal F_{t-1}\rp
    \leq\frac{3D^2\sigma_2^2}{\alpha_t^2}
    +\frac{6\sigma_1^2t^3}{\alpha_t^2}.
  \end{equation}
\end{lemma}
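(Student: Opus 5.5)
The plan is to control each of the two minima in $\delta_t$ by whichever branch is most convenient, and then take conditional expectations via the tower property. The first minimum involves $g_t,H_t$, which are sampled from $\xi_t$ after $\mathcal F_{t-1}$. The second involves $\widetilde g_t$, which is sampled after $\mathcal G_t\supseteq\mathcal F_{t-1}$. So for that term I would first condition on $\mathcal G_t$, use $\E[\norm{\widetilde g_t-\g(y_t)}^2\mid\mathcal G_t]\leq\sigma_1^2$ (and, by Jensen, $\E[\norm{\widetilde g_t-\g(y_t)}\mid\mathcal G_t]\leq\sigma_1$), and then condition on $\mathcal F_{t-1}$. Throughout I use $\E\min\{X,Y\}\leq\min\{\E X,\E Y\}$ and the elementary bound $1/\alpha_t=(t+3)/4\leq t$ for $t\geq1$, i.e.\ $1/\alpha_t^2\leq t^2$.

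\textbf{Second moment \eqref{eq:optimal_error_second_moment}.} This is the cleanest part. Bound each minimum by its linear branch, so that $\delta_t\leq \frac{D}{\alpha_t}\norm{H_t-\h(v_t)}+\frac{t^{3/2}}{\alpha_t}\norm{g_t-\g(v_t)}+\frac{t^{3/2}}{\alpha_t}\norm{\widetilde g_t-\g(y_t)}$. Apply $(a+b+c)^2\leq3(a^2+b^2+c^2)$ and take conditional expectations with \eqref{eq:stoch_grad}, \eqref{eq:stoch_hess} and the tower argument above. This gives $3D^2\sigma_2^2/\alpha_t^2+6\sigma_1^2t^3/\alpha_t^2$ directly.

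\textbf{First moment.} Here the linear branch is too weak for the Hessian term: it gives $D\sigma_2 t$ instead of $D\sigma_2\sqrt t$. So I would use the quadratic branch, which exploits the growth of $\lambda_t$. Using $1/\alpha_t^2\leq t^2$ and the lower bounds $\lambda_t\geq\sigma_2t^{3/2}/D$ and $\lambda_t\geq\sigma_1t^3/D^2$ from \eqref{eq:optimal_reg}, the pieces are estimated as follows:
\begin{itemize}
\item the Hessian piece $\frac{4t^2}{\lambda_t}\sigma_2^2$ is $\cO(\sigma_2D\sqrt t)$;
\item the gradient piece $\frac{4t^5\sigma_1^2}{\lambda_tD^2}$ is $\cO(\sigma_1t^2)$;
\item the fresh-gradient piece $\frac{2t^5\sigma_1^2}{\lambda_tD^2}$ is $\cO(\sigma_1t^2)$.
\end{itemize}
Getting the exact constants $1$ and $2$ claimed in \eqref{eq:optimal_error_budget} is the main obstacle. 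The first minimum couples the Hessian and gradient errors, and the crude estimate produces a factor of $4$. I would first try to split $\lambda_t$ between the two noise sources, using $\frac{x^2+y^2}{\lambda}\leq\frac{x^2}{\lambda'}+\frac{y^2}{\lambda''}$ with $\lambda=\lambda'+\lambda''$ in an optimized form, together with the exact value $1/\alpha_t^2=(t+3)^2/16$ rather than $t^2$. If that fails, I would accept absolute constants, since only the order matters for Theorem~\ref{thm:optimal_convergence}.

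\textbf{Summation.} Sum the per-step bound over $i=1,\dots,T$. We have $\sum_{i\leq T}\sigma_2D\sqrt i\leq\sigma_2DT^{3/2}=\frac{\sigma_2T^{3/2}}{D}D^2\leq\lambda_TD^2$. Also $\sum_{i\leq T}2\sigma_1i^2\leq2\sigma_1T^3=2\frac{\sigma_1T^3}{D^2}D^2$, using $\sum i^2=T(T+1)(2T+1)/6\leq T^3$. Both right-hand sides are dominated by $\lambda_TD^2$ (respectively $2\lambda_TD^2$) because $\lambda_T$ contains these terms plus $2M\geq0$. Together they give the claimed $2\lambda_TD^2$ whenever the per-step constants above are as stated.
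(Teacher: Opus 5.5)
Your second-moment bound is exactly the paper's argument. Your summation step is also fine, provided you combine the two pieces as $D^2(\sigma_2T^{3/2}/D+2\sigma_1T^3/D^2)\le 2\lambda_TD^2$; bounding them separately by $\lambda_TD^2$ and $2\lambda_TD^2$ would give $3\lambda_TD^2$.

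\textbf{The gap: the explicit first-moment constants.} With the quadratic branch at every $t$ and $1/\alpha_t^2\le t^2$, you get $\E[\delta_t\mid\mathcal F_{t-1}]\le 4\sigma_2D\sqrt t+6\sigma_1t^2$. That yields only $\sum_i\E[\delta_i\mid\mathcal F_{i-1}]\le 6\lambda_TD^2$, which is not what the lemma asserts.

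Neither proposed repair closes this.
\begin{itemize}
\item Splitting $\lambda_t=\lambda'+\lambda''$ is strictly weaker than what you already do, namely using the whole $\lambda_t$ against each noise source.
\item The exact value $1/\alpha_t^2=(t+3)^2/16$ helps only for $t\geq3$. At $t=1$ one has $\alpha_1=1$, and the Hessian part of the quadratic branch is $4\sigma_2^2/\lambda_1$. When $\sigma_1=0$ and $M$ is small, this is close to $4\sigma_2D$, so no manipulation of the quadratic branch gives the constant $1$. At $t=2$ it gives about $2.2\,\sigma_2D$ against the target $\sqrt2\,\sigma_2D$.
\end{itemize}

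\textbf{The missing step: use the linear branch for small $t$.} Your dismissal of the linear branch is correct only asymptotically. Since each minimum in $\delta_t$ can be bounded by either branch, and the choice may depend on $t$, the paper splits into two cases.
\begin{itemize}
\item For $t\geq3$, use the quadratic branch with $1/\alpha_t=(t+3)/4\le t/2$, i.e.\ $1/\alpha_t^2\le t^2/4$. This cancels the factor $4$ and gives $\sigma_2D\sqrt t+\tfrac32\sigma_1t^2$.
\item For $t\in\{1,2\}$, use the linear branch with conditional Jensen, i.e.\ $\E[\norm{H_t-\h(v_t)}\mid\mathcal F_{t-1}]\le\sigma_2$ and similarly for both gradient errors. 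Combine this with $1/\alpha_t\le\sqrt t$ (that is, $1\le1$ and $5/4\le\sqrt2$). This gives $(\sigma_2D+2\sigma_1t^{3/2})\sqrt t=\sigma_2D\sqrt t+2\sigma_1t^2$.
\end{itemize}

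Your fallback of accepting absolute constants would suffice for the $\cO$-rate in Theorem~\ref{thm:optimal_convergence}. It does not prove the lemma as stated, though. It would also change the explicit constants used downstream, in Lemma~\ref{lem:optimal_movement} and in the final constant $640(1+4\sqrt{38})$.
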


\begin{proof}
\begin{itemize}
\item We bound first moment. Taking the quadratic bound in each minimum gives
\begin{equation*}
  \delta_t
  \leq\frac{4}{\alpha_t^2\lambda_t}
  \ls\norm{H_t-\h(v_t)}^2+\frac{t^3}{D^2}\norm{g_t-\g(v_t)}^2\rs
  +\frac{2t^3}{\alpha_t^2\lambda_tD^2}\norm{\widetilde g_t-\g(y_t)}^2.
\end{equation*}
Taking conditional expectations and using~\eqref{eq:stoch_grad} and~\eqref{eq:stoch_hess}, with conditioning first on $\mathcal G_t$ for $\widetilde g_t$, we obtain
\begin{equation*}
  \E\lp\delta_t\mid\mathcal F_{t-1}\rp
  \leq\frac{4\sigma_2^2}{\alpha_t^2\lambda_t}
  +\frac{6\sigma_1^2t^3}{\alpha_t^2\lambda_tD^2}.
\end{equation*}
For $t\geq3$, $\alpha_t^{-1}=(t+3)/4\leq t/2$, so
\begin{equation*}
  \E\lp\delta_t\mid\mathcal F_{t-1}\rp
  \leq\frac{\sigma_2^2t^2}{\lambda_t}
  +\frac{3\sigma_1^2t^5}{2\lambda_tD^2}
  \stackrel{\eqref{eq:optimal_reg}}{\leq}
  \sigma_2D\sqrt t+\frac{3}{2}\sigma_1t^2.
\end{equation*}
For $t=1,2$, we use the linear bounds to obtain better constants. Using $\alpha_t^{-1}\leq\sqrt t$ gives
\begin{equation*}
  \E\lp\delta_t\mid\mathcal F_{t-1}\rp
  \leq\frac{\sigma_2D+2\sigma_1t^{3/2}}{\alpha_t}
  \leq\sigma_2D\sqrt t+2\sigma_1t^2.
\end{equation*}
Thus the first bound in~\eqref{eq:optimal_error_budget} holds for every $t\geq1$. By Cauchy--Schwarz and the formula for the sum of squares,
\begin{gather*}
  \sum_{t=1}^T\E\lp\delta_t\mid\mathcal F_{t-1}\rp
  \leq\sigma_2D\sum_{t=1}^T\sqrt t+2\sigma_1\sum_{t=1}^Tt^2
  \leq\sigma_2DT\sqrt{\frac{T+1}{2}}
  +\frac{\sigma_1T(T+1)(2T+1)}{3}\\
  \leq\sigma_2DT^{3/2}+2\sigma_1T^3
  \stackrel{\eqref{eq:optimal_reg}}{\leq}2\lambda_TD^2.
\end{gather*}

\item Now, we bound second moment. Taking the linear bounds and squaring gives
\begin{gather*}
  \delta_t^2
  \leq\ls
    \frac{D}{\alpha_t}\norm{H_t-\h(v_t)}
    +\frac{t^{3/2}}{\alpha_t}\norm{g_t-\g(v_t)}
    +\frac{t^{3/2}}{\alpha_t}\norm{\widetilde g_t-\g(y_t)}
  \rs^2\\
  \leq\frac{3D^2}{\alpha_t^2}\norm{H_t-\h(v_t)}^2
  +\frac{3t^3}{\alpha_t^2}
  \norm{g_t-\g(v_t)}^2+\frac{3t^3}{\alpha_t^2} \norm{\widetilde g_t-\g(y_t)}^2.
\end{gather*}
Taking conditional expectations as above, we obtain
\begin{equation*}
  \E\lp\delta_t^2\mid\mathcal F_{t-1}\rp
  \leq\frac{3D^2\sigma_2^2}{\alpha_t^2}
  +\frac{6\sigma_1^2t^3}{\alpha_t^2}.
\end{equation*}
\end{itemize}
\end{proof}

We now bound $\ls\E E_T^2\rs^{1/2}$, as required in~\eqref{eq:goal}.

\begin{lemma}
  For every integer $T\geq1$,
  \begin{equation}
    \label{eq:optimal_energy_moments}
    \E E_T\leq\frac{5}{2}\lambda_TD^2,
    \qquad
    \ls\E E_T^2\rs^{1/2}\leq5\lambda_TD^2.
  \end{equation}
\end{lemma}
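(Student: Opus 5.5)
Both bounds will come from summing the one-step inequality of Lemma~\ref{lem:optimal_one_step}. Dropping the nonnegative term $\frac{a_t\tau_t}{8\alpha_t}\norm{s_t}^2$ and summing over $t=1,\ldots,T$, the $\lambda$-increments telescope. Together with $E_0\leq\lambda_1D^2/2$ and $\lambda_0=\lambda_1$, this gives
\begin{equation*}
  0\leq E_T\leq \frac{\lambda_TD^2}{2}+\sum_{t=1}^T\delta_t+\sum_{t=1}^T\ell_t,
  \qquad \ell_t=-a_t\la\widetilde g_t-\g(y_t),z_{t-1}-x^*\ra .
\end{equation*}
Nonnegativity of $E_T$ is clear because $A_T\geq 0$ and $f(w_T)\geq f^*$.

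\emph{First moment.} The quantities $a_t$, $y_t$ and $z_{t-1}$ are $\mathcal G_t$-measurable, so unbiasedness gives $\E[\ell_t\mid\mathcal G_t]=0$. By the tower property, $\E\delta_t=\E\,\E[\delta_t\mid\mathcal F_{t-1}]$. The second bound in \eqref{eq:optimal_error_budget} then yields $\E\sum_t\delta_t\leq 2\lambda_TD^2$, and hence $\E E_T\leq \tfrac52\lambda_TD^2$.

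\emph{Second moment.} I will use Minkowski's inequality:
\begin{equation*}
  (\E E_T^2)^{1/2}\leq \frac{\lambda_TD^2}{2}+\Big(\E\big(\tsum_t\delta_t\big)^2\Big)^{1/2}+\Big(\E\big(\tsum_t\ell_t\big)^2\Big)^{1/2}.
\end{equation*}
For the martingale part, the cross terms $\E[\ell_s\ell_t]$ with $s<t$ vanish, since $\ell_s$ is $\mathcal G_t$-measurable and $\E[\ell_t\mid\mathcal G_t]=0$. For the diagonal terms, $\norm{z_{t-1}-x^*}\leq D$ and \eqref{eq:optimal_weight_bound} ($a_t\leq t^{3/2}/(\alpha_tD)$, a deterministic bound) give $\E\ell_t^2\leq \sigma_1^2t^3/\alpha_t^2$. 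Since $\alpha_t^{-1}=(t+3)/4\leq t$, this gives $\sum_t\E\ell_t^2\leq\sigma_1^2\sum_t t^5\leq \sigma_1^2T^6$. Because $\lambda_T\geq\sigma_1T^3/D^2$, we have $\sigma_1^2T^6\leq\lambda_T^2D^4$, so this term is at most $\lambda_TD^2$.

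For the $\delta$ part, the $\delta_t$ are nonnegative but not centered, and a crude Cauchy--Schwarz bound loses a factor $\sqrt T$. So I will split $\delta_t=\bar\delta_t+m_t$ with $\bar\delta_t=\E[\delta_t\mid\mathcal F_{t-1}]$ and $m_t=\delta_t-\bar\delta_t$.

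The predictable part $\sum_t\bar\delta_t$ is bounded almost surely, not just in expectation, by $2\lambda_TD^2$ through \eqref{eq:optimal_error_budget}. Its $L^2$ norm is therefore at most $2\lambda_TD^2$.

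The remainder $m_t$ is a martingale difference with respect to $\mathcal F_t$. Here one needs $\delta_t$ to be $\mathcal F_t$-measurable, which holds if $\mathcal F_t$ is taken to include $\widetilde\xi_t$ (the history before $\xi_{t+1}$). Then
\begin{equation*}
  \E\big(\tsum_t m_t\big)^2=\sum_t\E m_t^2\leq\sum_t\E\delta_t^2 .
\end{equation*}
By \eqref{eq:optimal_error_second_moment} with $\alpha_t^{-2}\leq t^2$, this is at most $\sum_t(3D^2\sigma_2^2t^2+6\sigma_1^2t^5)\leq D^2\sigma_2^2T^3+\sigma_1^2T^6$ up to small constants. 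This is bounded by $c\,\lambda_T^2D^4$, using $\lambda_T\geq\sigma_2T^{3/2}/D$ and $\lambda_T\geq\sigma_1T^3/D^2$.

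\emph{Main obstacle.} The delicate step is fitting the constants so that the total is at most $5\lambda_TD^2$. With the sum-of-powers bounds above, the estimate is $\tfrac12+2+1+\sqrt{c}$ in units of $\lambda_TD^2$, and we need $\sqrt{c}\leq \tfrac32$. I will check $c$ carefully. The estimates available to tighten are: use $\sum t^2\leq T^3/3+\dots$ and $\sum t^5\leq T^6/6+\dots$; apply $\alpha_t^{-1}\leq t$ only for $t\geq 1$ with the sharper $(t+3)/4$; and combine the $\sigma_1$ and $\sigma_2$ pieces through $(a+b)^2$ against $\lambda_T^2\geq(\sigma_1T^3/D^2+\sigma_2T^{3/2}/D)^2$. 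If the constants still do not close, I would merge the $m_t$ and $\ell_t$ martingales into a single martingale before applying Minkowski, which saves a triangle-inequality loss.
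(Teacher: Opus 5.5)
Your skeleton is the paper's: summing Lemma~\ref{lem:optimal_one_step}, the first-moment argument, Minkowski, and the martingale treatment of $\ell_t$. It correctly gives $\E E_T\le\tfrac52\lambda_TD^2$ and $(\E E_T^2)^{1/2}=\cO(\lambda_TD^2)$. The gap is the constant you flagged, and with your decomposition it does not close. Splitting $\delta_t=\bar\delta_t+m_t$ and applying Minkowski to each piece fails at $T=1$. Take $\sigma_2=0$ and $MD^2\ll\sigma_1$, so that $\lambda_1D^2\approx\sigma_1$. Then:
\begin{itemize}
\item the predictable part is bounded only by $\sigma_2D+2\sigma_1\approx2\lambda_1D^2$;
\item \eqref{eq:optimal_error_second_moment} gives only $\E m_1^2\le\E\delta_1^2\le6\sigma_1^2$;
\item $\ell_1$ contributes up to $\sigma_1\approx\lambda_1D^2$.
\end{itemize}
The total is $\tfrac12+2+\sqrt6+1\approx5.95$ in units of $\lambda_1D^2$. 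Your proposed refinements do nothing here, since $\alpha_1=1$ and every power sum is a single term. Merging $m_1$ with $\ell_1$ does not help either: both depend on $\widetilde g_1$, so you still only have $(\E(m_1+\ell_1)^2)^{1/2}\le(\E m_1^2)^{1/2}+(\E\ell_1^2)^{1/2}$. Even the sharper bound $\E\delta_1^2\le(\sigma_2D+2\sigma_1)^2$, which follows from the linear bounds via Minkowski, leaves you at $5.5$. As written, the argument proves $(\E E_T^2)^{1/2}\le6\lambda_TD^2$, not the stated $5\lambda_TD^2$.

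The missing step is to not separate the mean and the fluctuation of $\sum_t\delta_t$. Instead, expand the square and bound the off-diagonal terms by conditioning, using $\delta_i\ge0$. Let $\bar c_t=\sigma_2D\sqrt t+2\sigma_1t^2$. For $i<j$,
\[
\E[\delta_i\delta_j]=\E[\delta_i\,\E[\delta_j\mid\mathcal F_{j-1}]]\le\bar c_j\E\delta_i\le\bar c_i\bar c_j .
\]
With the diagonal terms from \eqref{eq:optimal_error_second_moment}, this gives
\[
\E\Big(\sum_t\delta_t\Big)^2\le3D^2\sigma_2^2T^3+6\sigma_1^2T^6+(\sigma_2DT^{3/2}+2\sigma_1T^3)^2\le10\lambda_T^2D^4 .
\]
In other words, you get a single $\sqrt{A+B^2}$ where your route has $\sqrt A+B$, and the total becomes $\tfrac12+\sqrt{10}+1<5$. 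Alternatively, you can keep your split for $T\ge2$ and handle $T=1$ without splitting, where $\tfrac12+\sqrt6+1<5$. For $T\ge2$, using $\alpha_t^{-1}=(t+3)/4$ and exact power sums, your split closes with room to spare.
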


\begin{proof}
  Summing~\eqref{eq:optimal_one_step} and using $E_0\leq\lambda_0D^2/2$, we obtain
  \begin{equation}
    \label{eq:optimal_summed_energy}
      0\leq E_T+\frac{1}{8}\sum_{t=1}^T\frac{a_t\tau_t}{\alpha_t}\norm{s_t}^2
      \leq E_0+\frac{D^2}{2}(\lambda_T-\lambda_0)
      +\sum_{t=1}^T\delta_t+\sum_{t=1}^T\ell_t
      \leq\frac{\lambda_TD^2}{2}
      +\sum_{t=1}^T\delta_t+\sum_{t=1}^T\ell_t,
  \end{equation}
  where $\ell_t=-a_t\la\widetilde g_t-\g(y_t),z_{t-1}-x^*\ra$.

  The points $y_t,z_{t-1}$ and the weight $a_t$ are determined before sampling $\widetilde\xi_t$. Thus,
  \begin{equation}
    \label{eq:optimal_ell_second_moment}
      \E\lp\ell_t^2\mid\mathcal G_t\rp
      \leq a_t^2D^2\E\lp\norm{\widetilde g_t-\g(y_t)}^2\mid\mathcal G_t\rp
      \stackrel{\eqref{eq:stoch_grad}}{\leq}a_t^2D^2\sigma_1^2
      \stackrel{\eqref{eq:optimal_weight_bound}}{\leq}
      \frac{\sigma_1^2t^3}{\alpha_t^2}
      \leq\sigma_1^2t^5.
  \end{equation}
  Moreover,
  \begin{equation*}
    \E\lp\ell_t\mid\mathcal G_t\rp
    =-a_t\la\E\lp\widetilde g_t\mid\mathcal G_t\rp-\g(y_t),z_{t-1}-x^*\ra
    \stackrel{\eqref{eq:unbiased_grad}}{=}0.
  \end{equation*}
  Taking expectations in~\eqref{eq:optimal_summed_energy} gives
  \begin{gather*}
    \E E_T
    \leq\frac{\lambda_TD^2}{2}
    +\sum_{t=1}^T\E\delta_t+\sum_{t=1}^T\E\ell_t
    =\frac{\lambda_TD^2}{2}+\sum_{t=1}^T\E\delta_t
    \stackrel{\eqref{eq:optimal_error_budget}}{\leq}
    \frac{\lambda_TD^2}{2}+2\lambda_TD^2
    =\frac{5}{2}\lambda_TD^2.
  \end{gather*}

  Using~\eqref{eq:optimal_summed_energy} and applying Minkowski's inequality twice, we obtain
  \begin{equation}
    \label{eq:optimal_energy_second_split}
    \ls\E E_T^2\rs^{1/2}
    \leq\frac{\lambda_TD^2}{2}
    +\ls\E\ls\sum_{t=1}^T\delta_t\rs^2\rs^{1/2}
    +\ls\E\ls\sum_{t=1}^T\ell_t\rs^2\rs^{1/2},
  \end{equation}
  since sums have finite second moments by~\eqref{eq:optimal_error_second_moment} and~\eqref{eq:optimal_ell_second_moment}.
  It remains to bound the second moments of the two sums in~\eqref{eq:optimal_energy_second_split}.
   
  For the first sum,
  \begin{equation}
    \label{eq:optimal_error_sum_expansion}
    \E\ls\sum_{t=1}^T\delta_t\rs^2
    =\sum_{t=1}^T\E\delta_t^2
    +2\sum_{1\leq i<j\leq T}\E\lp\delta_i\delta_j\rp.
  \end{equation}
  For $i<j$, $\delta_i$ is determined before iteration $j$. Therefore, by the law of total expectation
  \begin{gather*}
    \E\lp\delta_i\delta_j\rp
    =\E\lp\delta_i\E\lp\delta_j\mid\mathcal F_{j-1}\rp\rp
    \stackrel{\eqref{eq:optimal_error_budget}}{\leq}
    (\sigma_2D\sqrt j+2\sigma_1j^2)\E\delta_i
    \stackrel{\eqref{eq:optimal_error_budget}}{\leq}
    (\sigma_2D\sqrt i+2\sigma_1i^2)
    (\sigma_2D\sqrt j+2\sigma_1j^2).
  \end{gather*}
  By the law of total expectation and~\eqref{eq:optimal_error_second_moment},
  \begin{equation*}
    \E\delta_t^2
    =\E\lp\E\lp\delta_t^2\mid\mathcal F_{t-1}\rp\rp
    \leq\frac{3D^2\sigma_2^2+6\sigma_1^2t^3}{\alpha_t^2}
    \leq3D^2\sigma_2^2t^2+6\sigma_1^2t^5.
  \end{equation*}
  Substituting these bounds into~\eqref{eq:optimal_error_sum_expansion}, we obtain
  \begin{equation}
    \label{eq:optimal_error_sum_moment}
    \begin{aligned}
      \E\ls\sum_{t=1}^T\delta_t\rs^2
      &\leq3D^2\sigma_2^2\sum_{t=1}^Tt^2
      +6\sigma_1^2\sum_{t=1}^Tt^5
      +\ls\sum_{t=1}^T(\sigma_2D\sqrt t+2\sigma_1t^2)\rs^2\\
      &\leq3D^2\sigma_2^2T^3+6\sigma_1^2T^6
      +(\sigma_2DT^{3/2}+2\sigma_1T^3)^2
      \leq10(\sigma_2DT^{3/2}+\sigma_1T^3)^2
      \stackrel{\eqref{eq:optimal_reg}}{\leq}10\lambda_T^2D^4.
    \end{aligned}
  \end{equation}

  For the sum of $\ell_t$, the zero conditional means imply that the cross terms vanish. For $i<j$,
  \begin{equation*}
    \E\lp\ell_i\ell_j\rp
    =\E\lp\ell_i\E\lp\ell_j\mid\mathcal G_j\rp\rp=0.
  \end{equation*}
  Consequently,
  \begin{equation}
    \label{eq:optimal_stochastic_sum_moments}
      \E\ls\sum_{t=1}^T\ell_t\rs^2
      =\sum_{t=1}^T\E\ell_t^2
      \stackrel{\eqref{eq:optimal_ell_second_moment}}{\leq}
      \sigma_1^2\sum_{t=1}^Tt^5
      =\frac{\sigma_1^2T^2(T+1)^2(2T^2+2T-1)}{12}
      \leq\sigma_1^2T^6
      \stackrel{\eqref{eq:optimal_reg}}{\leq}\lambda_T^2D^4.
  \end{equation}

  Substituting~\eqref{eq:optimal_error_sum_moment} and~\eqref{eq:optimal_stochastic_sum_moments} into~\eqref{eq:optimal_energy_second_split}, we obtain
  \begin{equation*}
    \ls\E E_T^2\rs^{1/2}
    \leq\ls\frac{1}{2}+\sqrt{10}+1\rs\lambda_TD^2
    \leq5\lambda_TD^2.
  \end{equation*}
  This proves~\eqref{eq:optimal_energy_moments}.
\end{proof}

\subsection{\texorpdfstring{$A_T$}{A T} Bounds}

We now bound $\ls\E A_T^{-2}\rs^{1/2}$ in~\eqref{eq:goal}. 

For $t\geq2$, we have:
\begin{itemize}
  \item If $a_t=\alpha_tA_t$, then
  \begin{equation*}
    A_t=A_{t-1}+a_t
    =\frac{A_{t-1}}{1-\alpha_t},
    \qquad A_t^{-1}=(1-\alpha_t)A_{t-1}^{-1}.
  \end{equation*}
  \item If $a_t<\alpha_tA_t$, then
  \begin{equation*}
    a_t<\alpha_tA_t=\alpha_t(A_{t-1}+a_t),
    \qquad a_t<\frac{\alpha_tA_{t-1}}{1-\alpha_t}.
  \end{equation*}
  Therefore, the minimum in~\eqref{eq:optimal_weights} gives
  \begin{equation}
    \label{eq:optimal_damped_weight}
    a_t=\frac{\lambda_t}{\alpha_t\tau_t}
    \stackrel{\eqref{eq:optimal_reg},\eqref{eq:optimal_damping}}{=}
    \frac{1}{\alpha_t\ls D/t^{3/2}+\norm{s_t}\rs}.
  \end{equation}
\end{itemize}

Thus, the growth of $A_t=A_{t-1}+a_t$ depends on the step lengths at iterations with $a_t<\alpha_tA_t$. The following lemma bounds $A_T^{-1}$ in terms of these step lengths.

\begin{lemma}
  For every integer $T\geq1$,
  \begin{equation}
    \label{eq:optimal_inverse_step_sum}
    A_T^{-1}\leq\frac{128D}{T^{7/2}}
    +\frac{256}{T^4}
    \sum_{\substack{2\leq t\leq T\\a_t<\alpha_tA_t}}\frac{\norm{s_t}}{\alpha_t}.
  \end{equation}
\end{lemma}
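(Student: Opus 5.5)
The plan is to turn the dichotomy just established for $t\geq2$ into a one-step recursion for $A_t^{-1}$ of the form
\begin{equation*}
  A_t^{-1}\leq(1-\alpha_t)A_{t-1}^{-1}+c\,\alpha_t^3\ls\frac{D}{t^{3/2}}+\norm{s_t}\rs\mathbf 1\lb a_t<\alpha_tA_t\rb,
\end{equation*}
with an absolute constant $c$. Then I would telescope it with the weights $W_t=t(t+1)(t+2)(t+3)$. These weights satisfy $W_t(1-\alpha_t)=W_{t-1}$, because $1-\alpha_t=(t-1)/(t+3)$.

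\textbf{Undamped and damped steps.} On undamped iterations, the recursion with no extra term was already shown above. On a damped iteration, I set $u=A_{t-1}$ and $a=a_t$. From $A_t^{-1}=u^{-1}-a/(u(u+a))$ it suffices to bound
\begin{equation*}
  \frac{\alpha_t}{u}-\frac{a}{u(u+a)}.
\end{equation*}
The damped case gives $a(1-\alpha_t)<\alpha_tu$, so $u+a< u/(1-\alpha_t)$. Hence the expression above is at most
\begin{equation*}
  \frac{\alpha_t}{u}-\frac{(1-\alpha_t)a}{u^2}\leq\frac{\alpha_t^2}{4(1-\alpha_t)a},
\end{equation*}
where the last step maximizes the quadratic in $1/u$. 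Since $t\geq2$, we have $1-\alpha_t\geq 1/5$. Substituting $a_t^{-1}=\alpha_t(D/t^{3/2}+\norm{s_t})$ from~\eqref{eq:optimal_damped_weight} gives the recursion with $c=5/4$.

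\textbf{Telescoping.} I multiply by $W_t$ and sum over $t=2,\ldots,T$. The left side telescopes to $W_TA_T^{-1}-W_1A_1^{-1}$. For the initial term, \eqref{eq:optimal_initialization} gives $A_1^{-1}=\tau_1/\lambda_1=D+\norm{s_1}\leq2D$, so $W_1A_1^{-1}\leq48D$.

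For the $D$-part of the extra term, I use $\alpha_t^3W_t=64\,t(t+1)(t+2)/(t+3)^2\leq64t$. Summed against $D/t^{3/2}$, this is at most $64D\sum_{t\leq T}t^{-1/2}\leq128D\sqrt T$.

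For the step-length part, I write $\alpha_t^3\norm{s_t}=\alpha_t^4\,(\norm{s_t}/\alpha_t)$. Since $\alpha_t^4W_t=256\,t(t+1)(t+2)/(t+3)^3\leq256$, the coefficient of each $\norm{s_t}/\alpha_t$ is bounded by an absolute constant.

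\textbf{Conclusion and main obstacle.} Dividing by $W_T\geq T^4$ gives the claimed form, $D/T^{7/2}$ plus $T^{-4}$ times the damped sum of $\norm{s_t}/\alpha_t$. The initial term $48D/T^4$ is absorbed into the $D/T^{7/2}$ term. The main obstacle is the damped-step inequality: naive convexity bounds of $1/x$ go the wrong way, and one needs the damped-case inequality $a_t(1-\alpha_t)<\alpha_tA_{t-1}$ to turn it into a bound of order $\alpha_t^2/a_t$.

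The constants also need care. The factor $5/4$ multiplies both $64$ and $256$, so matching exactly $128$ and $256$ requires a sharper estimate. Two places to recover it are $t(t+1)(t+2)/(t+3)^3\leq 4/5$, and tighter sums that use $\alpha_t\leq 4/5$. If needed, I would carry the slightly larger absolute constants, since only the orders in $D$ and $T$ matter downstream.
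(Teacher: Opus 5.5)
Your telescoping matches the paper's: the same weights $W_t=t(t+1)(t+2)(t+3)$ with $W_t(1-\alpha_t)=W_{t-1}$, the same bound $A_1^{-1}\le 2D$, and the same estimates $W_t\alpha_t^3\le 64t$ and $W_t\alpha_t^4\le 256$. The difference is how you derive the damped one-step inequality.

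\textbf{The paper's damped step.} The paper writes $A_t=(1-\alpha_t)\frac{A_{t-1}}{1-\alpha_t}+\alpha_t\frac{a_t}{\alpha_t}$ and applies convexity of $1/x$ to this convex combination (Jensen). This gives $A_t^{-1}\le(1-\alpha_t)^2A_{t-1}^{-1}+\alpha_t^2/a_t$ in one line, i.e.\ your recursion with $c=1$. It does not even use the damping inequality, which enters only through the formula for $a_t$. So your remark that convexity of $1/x$ ``goes the wrong way'' is mistaken: only the tangent-line bound does, while the two-point Jensen bound has the right direction.

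\textbf{Your damped step.} Your argument is correct: the identity $A_t^{-1}=u^{-1}-a/(u(u+a))$, the bound $u+a<u/(1-\alpha_t)$, and maximizing a quadratic in $1/u$. It yields the extra term $\frac{\alpha_t^2}{4(1-\alpha_t)a_t}$. This is worse than the paper's term only at $t=2$, and it is asymptotically four times smaller.

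\textbf{Constants.} This is what needs fixing, since the statement specifies $128$ and $256$. With the uniform $c=5/4$ you get coefficients $80D/\sqrt t$ and $320\,\norm{s_t}/\alpha_t$. Then $256$ is not reached, and $128$ fails for large $T$. Your proposed repair $t(t+1)(t+2)/(t+3)^3\le 4/5$ is false, because this ratio tends to $1$.

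The clean fix within your route is to keep the exact factor $\frac{1}{4(1-\alpha_t)}=\frac{t+3}{4(t-1)}$ rather than bounding it by $5/4$. The weighted extra term then becomes $\frac{16(t+1)(t+2)}{(t+3)(t-1)}\frac{D}{\sqrt t}+\frac{64t(t+1)(t+2)}{(t+3)^2(t-1)}\frac{\norm{s_t}}{\alpha_t}$. For $t\ge2$:
\begin{itemize}
\item the first coefficient is decreasing in $t$, hence at most $192/5<64$;
\item the second is below $64$, since $(t+3)^2(t-1)-t(t+1)(t+2)=2t^2+t-9>0$.
\end{itemize}
Add $W_1A_1^{-1}\le 48D$ and use $\sum_{t=2}^T t^{-1/2}\le 2\sqrt T-2$. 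This gives the lemma, in fact with $64$ in place of $256$.

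The sum must start at $t=2$ for the $48D$ to be absorbed into $128D\sqrt T$ when the coefficient is $64$. With your bound $\sum_{t\le T}t^{-1/2}\le 2\sqrt T$, it does not fit. Alternatively, you can simply use the paper's Jensen step, which gives $c=1$ directly.
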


\begin{proof}
  For $t\geq2$ with $a_t<\alpha_tA_t$, we have $0<\alpha_t<1$ and
  \begin{equation*}
    A_t=A_{t-1}+a_t
    =(1-\alpha_t)\frac{A_{t-1}}{1-\alpha_t}
    +\alpha_t\frac{a_t}{\alpha_t}.
  \end{equation*}
  By convexity of $1/x$
  \begin{equation}
    \label{eq:optimal_inverse_step}
    \begin{aligned}
      \frac1{A_t}
      \leq(1-\alpha_t)\frac1{A_{t-1}/(1-\alpha_t)}
      +\alpha_t\frac1{a_t/\alpha_t}
      =\frac{(1-\alpha_t)^2}{A_{t-1}}+\frac{\alpha_t^2}{a_t}
      \stackrel{\eqref{eq:optimal_damped_weight}}{\leq}
      \frac{1-\alpha_t}{A_{t-1}}
      +\frac{D\alpha_t^3}{t^{3/2}}+\alpha_t^3\norm{s_t}.
    \end{aligned}
  \end{equation}
For $t\geq2$ with $a_t<\alpha_tA_t$, multiplying inequality above by $t(t+1)(t+2)(t+3)$ and substituting $\alpha_t=4/(t+3)$ gives
\begin{gather*}
  \frac{t(t+1)(t+2)(t+3)}{A_t}
  -\frac{(t-1)t(t+1)(t+2)}{A_{t-1}}\\
  \leq\frac{64D}{\sqrt t}\frac{(t+1)(t+2)}{(t+3)^2}
  +256\frac{t(t+1)(t+2)}{(t+3)^3}\frac{\norm{s_t}}{\alpha_t}\\
  \leq\frac{64D}{\sqrt t}+256\frac{\norm{s_t}}{\alpha_t}.
\end{gather*}

For $a_t=\alpha_tA_t$, we have equality:
  \begin{equation*}
      \frac1{A_t}
      =(1-\alpha_t)\frac1{A_{t-1}/(1-\alpha_t)}
      +\alpha_t\frac1{a_t/\alpha_t}
      =
      \frac{1-\alpha_t}{A_{t-1}}.
  \end{equation*}

For $t\geq2$ with $a_t=\alpha_tA_t$, multiplying the equality above by the same factor gives
\begin{equation*}
  \frac{t(t+1)(t+2)(t+3)}{A_t}
  -\frac{(t-1)t(t+1)(t+2)}{A_{t-1}}=0.
\end{equation*}
Summing these two relations over $t=2,\ldots,T$, we obtain
\begin{gather*}
  \frac{T(T+1)(T+2)(T+3)}{A_T}
  \leq\frac{24}{A_1}
  +\sum_{\substack{2\leq t\leq T\\a_t<\alpha_tA_t}}
  \ls\frac{64D}{\sqrt t}+256\frac{\norm{s_t}}{\alpha_t}\rs
  \leq\frac{24}{A_1}+64D\sum_{t=2}^T\frac1{\sqrt t}
  +256\sum_{\substack{2\leq t\leq T\\a_t<\alpha_tA_t}}\frac{\norm{s_t}}{\alpha_t}\\
  \leq48D+128D(\sqrt T-1)
  +256\sum_{\substack{2\leq t\leq T\\a_t<\alpha_tA_t}}\frac{\norm{s_t}}{\alpha_t}
  \leq128D\sqrt T
  +256\sum_{\substack{2\leq t\leq T\\a_t<\alpha_tA_t}}\frac{\norm{s_t}}{\alpha_t}.
\end{gather*}
Here we used
\begin{equation}
  \label{eq:optimal_inverse_sqrt_sum}
  A_1^{-1}\stackrel{\eqref{eq:optimal_initialization}}{=}\frac{\tau_1}{\lambda_1}
  =D+\norm{s_1}\stackrel{\eqref{eq:optimal_geometry}}{\leq}2D \quad \text{and} \quad \sum_{t=1}^T\frac1{\sqrt t}\leq2\sqrt T-1.
\end{equation}
Dividing by $T(T+1)(T+2)(T+3)\geq T^4$ proves~\eqref{eq:optimal_inverse_step_sum}.
\end{proof}

By~\eqref{eq:optimal_geometry}, each term $\norm{s_t}/\alpha_t$ is bounded by $D$, so the sum in~\eqref{eq:optimal_inverse_step_sum} has finite second moment. Also, $A_T^{-1}\leq A_1^{-1}\leq2D$. Applying Minkowski's inequality in~\eqref{eq:optimal_inverse_step_sum}, we obtain
\begin{equation}
  \label{eq:optimal_inverse_reduction}
  \ls\E A_T^{-2}\rs^{1/2}
  \leq\frac{128D}{T^{7/2}}
  +\frac{256}{T^4}
  \ls\E\ls\sum_{\substack{2\leq t\leq T\\a_t<\alpha_tA_t}}
  \frac{\norm{s_t}}{\alpha_t}\rs^2\rs^{1/2}.
\end{equation}
To obtain $\ls\E A_T^{-2}\rs^{1/2}=\cO(D/T^{7/2})$, it suffices to bound the second moment of the sum on the right by $\cO(D^2T)$. We use the squared-step term retained in~\eqref{eq:optimal_one_step}. At iterations with $a_t<\alpha_tA_t$, we have $a_t\alpha_t\tau_t=\lambda_t$, so we define
\begin{equation}
  \label{eq:optimal_movement}
  V_t=\frac{a_t\tau_t}{\alpha_t\lambda_t}\norm{s_t}^2.
\end{equation}
By~\eqref{eq:optimal_damped_weight}, $\norm{s_t}/\alpha_t=\sqrt{V_t}$ whenever $a_t<\alpha_tA_t$. Therefore,
\begin{equation*}
  \sum_{\substack{2\leq t\leq T\\a_t<\alpha_tA_t}}\frac{\norm{s_t}}{\alpha_t}
  =\sum_{\substack{2\leq t\leq T\\a_t<\alpha_tA_t}}\sqrt{V_t}
  \leq\sum_{t=1}^T\sqrt{V_t}.
\end{equation*}
It therefore suffices to bound $\E\ls\sum_{t=1}^T\sqrt{V_t}\rs^2$.

\begin{lemma}
  \label{lem:optimal_movement}
  For every integer $T\geq1$,
  \begin{equation}
    \label{eq:optimal_movement_bounds}
    \E\sum_{t=1}^T\sqrt t V_t\leq76D^2\sqrt T,
    \qquad
    \E\ls\sum_{t=1}^T\sqrt{V_t}\rs^2\leq152D^2T.
  \end{equation}
\end{lemma}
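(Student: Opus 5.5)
The plan is to run a weighted version of the summation behind \eqref{eq:optimal_summed_energy}, this time keeping the squared-step term, and then pass from $\sum\sqrt t V_t$ to $(\sum\sqrt{V_t})^2$ by Cauchy--Schwarz.

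\textbf{First moment.} By \eqref{eq:optimal_movement},
\begin{equation*}
\frac{a_t\tau_t}{8\alpha_t}\norm{s_t}^2=\frac{\lambda_t}{8}V_t .
\end{equation*}
So \eqref{eq:optimal_one_step} reads
\begin{equation*}
E_t+\tfrac{\lambda_t}{8}V_t\leq E_{t-1}+\tfrac{D^2}{2}(\lambda_t-\lambda_{t-1})+\delta_t+\ell_t .
\end{equation*}
Divide this by $\lambda_t/\sqrt t$, i.e.\ multiply by $\sqrt t/\lambda_t$, to get
\begin{equation*}
\tfrac{\sqrt t}{8}V_t\leq \tfrac{\sqrt t}{\lambda_t}(E_{t-1}-E_t)+\tfrac{\sqrt t D^2}{2}\tfrac{\lambda_t-\lambda_{t-1}}{\lambda_t}+\tfrac{\sqrt t}{\lambda_t}(\delta_t+\ell_t).
\end{equation*}
Take expectations and sum over $t=1,\dots,T$, handling the four groups as follows.
\begin{itemize}
\item The $\ell_t$ terms vanish. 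The coefficient $\sqrt t/\lambda_t$ is deterministic, so $\E\ell_t=0$ kills them.
\item The $E$ terms are handled by Abel summation. Write $c_t=\sqrt t/\lambda_t$. Then
\begin{equation*}
\sum_t c_t(E_{t-1}-E_t)=c_1E_0+\sum_{t=1}^{T-1}(c_{t+1}-c_t)E_t-c_TE_T .
\end{equation*}
Drop $-c_TE_T\le0$ and bound $|c_{t+1}-c_t|\,\E E_t$ using $\E E_t\le\tfrac52\lambda_tD^2$ from \eqref{eq:optimal_energy_moments}. Since $c_t$ need not be monotone (its growth part $\sqrt t$ competes with $\lambda_t\sim t^3$), I would use $|c_{t+1}-c_t|\lambda_t\lesssim \sqrt t\,|\lambda_{t+1}-\lambda_t|/\lambda_{t+1}+1/\sqrt t$. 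Together with $(\lambda_{t+1}-\lambda_t)/\lambda_{t+1}\lesssim 1/t$, this gives increments of order $D^2/\sqrt t$, which sum to $\cO(D^2\sqrt T)$.
\item The deterministic $\lambda$-increment term is handled the same way: $\sqrt t(\lambda_t-\lambda_{t-1})/\lambda_t\lesssim 1/\sqrt t$ because every term of $\lambda_t$ is a power of $t$ of degree at most $3$.
\item The $\delta$ terms use Lemma~\ref{lem:optimal_error_moments}. It gives $\E\delta_t\le\sigma_2D\sqrt t+2\sigma_1t^2$, and $\lambda_t\ge \sigma_2t^{3/2}/D+\sigma_1t^3/D^2$. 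Hence $\tfrac{\sqrt t}{\lambda_t}\E\delta_t\lesssim D^2/\sqrt t$, which again sums to $\cO(D^2\sqrt T)$.
\end{itemize}
Tracking constants, $8\times$(sum of these) should yield the stated $76D^2\sqrt T$.

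\textbf{Second moment.} By Cauchy--Schwarz,
\begin{equation*}
\Big(\sum_t\sqrt{V_t}\Big)^2\le \Big(\sum_t t^{-1/2}\Big)\Big(\sum_t\sqrt t V_t\Big)\le 2\sqrt T\sum_t\sqrt tV_t .
\end{equation*}
Taking expectations and using the first bound gives $152D^2T$.

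\textbf{Main obstacle.} The delicate step is the Abel summation with the non-monotone weight $c_t$. To handle it I would split $\lambda_t$ into its three monomials and check $|c_{t+1}-c_t|\lambda_t\le C/\sqrt t$ directly, using $\lambda_{t+1}\le 8\lambda_t$. The first step also needs care: at $t=1$ use $\lambda_0=\lambda_1$ and $E_0\le\lambda_1D^2/2$, so that $c_1E_0\le D^2/2$.
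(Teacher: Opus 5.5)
Your skeleton is the paper's. You divide \eqref{eq:optimal_one_step} by $\lambda_t$, weight by $\sqrt t$, remove $\ell_t$ in expectation, use $1-\lambda_{t-1}/\lambda_t\le 3/t$ and $\E\delta_t/\lambda_t\le 2D^2/t$, Abel-sum against $\E E_t\le\frac52\lambda_tD^2$, and finish with Cauchy--Schwarz. The only structural difference is the order of steps. The paper first uses $E_{t-1}\ge0$ and $\lambda_{t-1}\le\lambda_t$ to replace $\E E_{t-1}/\lambda_t$ by $\E E_{t-1}/\lambda_{t-1}$. What telescopes is then the normalized potential $\E E_t/\lambda_t\le\frac52D^2$ against the increasing weights $\sqrt t$, so the non-monotonicity you call the main obstacle never arises.

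The gap is in the constant. Bounding the Abel coefficients by $|c_{t+1}-c_t|$ gives $\mathcal{O}(D^2\sqrt T)$, but not $76D^2\sqrt T$. When $\sigma_1t^3/D^2$ dominates $\lambda_t$, one has $\lambda_t|c_{t+1}-c_t|\approx\frac{5}{2\sqrt t}$ rather than $\sqrt{t+1}-\sqrt t\approx\frac{1}{2\sqrt t}$. The $E$-terms then contribute about $\frac{25}{2}D^2\sqrt T$ instead of $\frac52D^2\sqrt T$, and the final constant roughly doubles for large $T$. The constants $76$ and $152$ propagate into the explicit constant $640(1+4\sqrt{38})$ of the theorem, so your claim that the bookkeeping ``should yield $76$'' is not justified as written.

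The fix is to keep signs instead of absolute values:
\begin{itemize}
\item If $c_{t+1}\le c_t$, the term $(c_{t+1}-c_t)\E E_t$ is nonpositive because $E_t\ge0$, and can be dropped.
\item Otherwise, $(c_{t+1}-c_t)\E E_t\le\frac52D^2\bigl(\sqrt{t+1}\,\lambda_t/\lambda_{t+1}-\sqrt t\bigr)\le\frac52D^2(\sqrt{t+1}-\sqrt t)$.
\end{itemize}
This is the paper's monotonicity step in disguise. It yields $\frac18\E\sum_{t}\sqrt tV_t\le\frac{D^2}{2}+\frac52D^2(\sqrt T-1)+\frac72D^2(2\sqrt T-1)\le\frac{19}{2}D^2\sqrt T$, i.e.\ exactly $76D^2\sqrt T$. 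Your Cauchy--Schwarz step with $\sum_t t^{-1/2}\le2\sqrt T$ then gives $152D^2T$ as stated.
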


\begin{proof}
  Dividing~\eqref{eq:optimal_one_step} by $\lambda_t$, taking expectations, and using $\E\ell_t=0$, we get
  \begin{equation}
    \label{eq:Vt_bound}
    \frac18\E V_t
    \leq\frac{\E E_{t-1}}{\lambda_t}-\frac{\E E_t}{\lambda_t}
    +\frac{D^2}{2}\ls1-\frac{\lambda_{t-1}}{\lambda_t}\rs
    +\frac{\E\delta_t}{\lambda_t}.
  \end{equation}
  We bound the third term first. By the definition of $\lambda_t$ in~\eqref{eq:optimal_reg}, for $t\geq2$,
  \begin{gather*}
    \lambda_{t-1}
    =\frac{\sigma_1t^3}{D^2}\ls1-\frac1t\rs^3
    +\frac{\sigma_2t^{3/2}}{D}\ls1-\frac1t\rs^{3/2}+2M\\
    \geq\ls1-\frac1t\rs^3
    \ls\frac{\sigma_1t^3}{D^2}+\frac{\sigma_2t^{3/2}}{D}+2M\rs
    =\ls1-\frac1t\rs^3\lambda_t.
  \end{gather*}
  Since $\lambda_t\geq\lambda_{t-1}$,
  \begin{equation*}
    0\leq1-\frac{\lambda_{t-1}}{\lambda_t}
    \leq\frac3t-\frac3{t^2}+\frac1{t^3}\leq\frac3t.
  \end{equation*}
  The last bound also holds at $t=1$ because $\lambda_0=\lambda_1$. 
  
  Next, we bound the last term in~\eqref{eq:Vt_bound}
  \begin{equation*}
    \frac{\E\delta_t}{\lambda_t}
    \stackrel{\eqref{eq:optimal_error_budget} }{\leq}\frac{\sigma_2D\sqrt t+2\sigma_1t^2}{\lambda_t}
    \leq\frac{2D^2}{t}
    \frac{\sigma_2t^{3/2}/D+\sigma_1t^3/D^2}{\lambda_t}
    \stackrel{\eqref{eq:optimal_reg}}{\leq}\frac{2D^2}{t}.
  \end{equation*}
  Substituting these bounds into~\eqref{eq:Vt_bound} and using $\lambda_t\geq\lambda_{t-1}$ gives
  \begin{equation}
    \label{eq:optimal_normalized_step}
    \frac18\E V_t
    \leq\frac{\E E_{t-1}}{\lambda_{t-1}}-\frac{\E E_t}{\lambda_t}
    +\frac{7D^2}{2t}.
  \end{equation}
  Multiplying~\eqref{eq:optimal_normalized_step} by $\sqrt t$ and summing gives
  \begin{gather*}
    \frac18\E\sum_{t=1}^T\sqrt t V_t
    \leq\sum_{t=1}^T\sqrt t\ls
      \frac{\E E_{t-1}}{\lambda_{t-1}}-\frac{\E E_t}{\lambda_t}\rs
    +\frac{7D^2}{2}\sum_{t=1}^T\frac1{\sqrt t}\\
    =\frac{E_0}{\lambda_0}
    +\sum_{t=1}^{T-1}(\sqrt{t+1}-\sqrt t)\frac{\E E_t}{\lambda_t}
    -\sqrt T\frac{\E E_T}{\lambda_T}
    +\frac{7D^2}{2}\sum_{t=1}^T\frac1{\sqrt t}\\
    \stackrel{\eqref{eq:optimal_energy_moments},\eqref{eq:optimal_inverse_sqrt_sum}}{\leq}
    \frac{D^2}{2}+\frac{5D^2}{2}(\sqrt T-1)
    +\frac{7D^2}{2}(2\sqrt T-1)
    =\frac{D^2}{2}(19\sqrt T-11)
    \leq\frac{19D^2}{2}\sqrt T.
  \end{gather*}
  This proves the first bound in~\eqref{eq:optimal_movement_bounds}. By Cauchy--Schwarz,
  \begin{gather*}
    \ls\sum_{t=1}^T\sqrt{V_t}\rs^2
    =\ls\sum_{t=1}^Tt^{-1/4}\ls t^{1/4}\sqrt{V_t}\rs\rs^2
    \leq\ls\sum_{t=1}^T\frac1{\sqrt t}\rs
          \ls\sum_{t=1}^T\sqrt t V_t\rs,\\
    \E\ls\sum_{t=1}^T\sqrt{V_t}\rs^2
    \stackrel{\eqref{eq:optimal_inverse_sqrt_sum}}{\leq}
    (2\sqrt T-1)\E\sum_{t=1}^T\sqrt t V_t
    \leq(2\sqrt T-1)76D^2\sqrt T
    \leq152D^2T.
  \end{gather*}
\end{proof}

We restate Theorem~\ref{thm:optimal_convergence} with explicit constants.

\begin{theorem}
  Let $f$ be convex and satisfy Assumption~\ref{as:lip_hess}. Let $g_t$ and $H_t$ satisfy Assumptions~\ref{as:stoch_grad} and~\ref{as:stoch_hess}, respectively. Let $\widetilde g_t$ satisfy Assumptions~\ref{as:stoch_grad} and~\ref{as:unbiased_grad}. Then Algorithm~\ref{alg:optimal_method_appendix} with $M\geq L_2$ and parameters~\eqref{eq:optimal_reg} satisfies, for every integer $T\geq1$,
  \begin{equation*}
    \E\lp f(w_T)-f^*\rp
    \leq640\ls1+4\sqrt{38}\rs
    \ls\frac{\sigma_1D}{\sqrt T}
    +\frac{\sigma_2D^2}{T^2}+\frac{2MD^3}{T^{7/2}}\rs.
  \end{equation*}
\end{theorem}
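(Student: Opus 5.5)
The proof is a direct assembly of the already established pieces, starting from the Hölder split \eqref{eq:goal}:
\[
\E\lp f(w_T)-f^*\rp\leq\ls\E E_T^2\rs^{1/2}\ls\E A_T^{-2}\rs^{1/2}.
\]

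For the first factor we invoke \eqref{eq:optimal_energy_moments}, which gives $\ls\E E_T^2\rs^{1/2}\leq5\lambda_TD^2$.

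For the second factor we start from \eqref{eq:optimal_inverse_reduction}. The sum of $\norm{s_t}/\alpha_t$ over damped iterations equals the corresponding sum of $\sqrt{V_t}$, and this is at most $\sum_{t=1}^T\sqrt{V_t}$. The second bound of Lemma~\ref{lem:optimal_movement} then controls its second moment by $152D^2T$. Plugging in, we get
\[
\ls\E A_T^{-2}\rs^{1/2}\leq\frac{128D}{T^{7/2}}+\frac{256\sqrt{152}\,D\sqrt T}{T^4}=\frac{128\ls1+2\sqrt{152}\rs D}{T^{7/2}}.
\]
Since $2\sqrt{152}=4\sqrt{38}$, this equals $128(1+4\sqrt{38})D/T^{7/2}$.

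Multiplying the two factors gives
\[
\E\lp f(w_T)-f^*\rp\leq640\ls1+4\sqrt{38}\rs\frac{\lambda_TD^3}{T^{7/2}}.
\]
It remains to expand $\lambda_T$ using \eqref{eq:optimal_reg}:
\[
\frac{\lambda_TD^3}{T^{7/2}}=\frac{\sigma_1T^3D}{T^{7/2}}+\frac{\sigma_2T^{3/2}D^2}{T^{7/2}}+\frac{2MD^3}{T^{7/2}}=\frac{\sigma_1D}{\sqrt T}+\frac{\sigma_2D^2}{T^2}+\frac{2MD^3}{T^{7/2}}.
\]
This is exactly the claimed bound.

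No step here is hard, because the substantive work (the one-step potential inequality, the error moments, and the movement bound) has already been done. The only points that need care are the following.
\begin{itemize}
\item Minkowski's inequality may be applied in \eqref{eq:optimal_inverse_reduction} because all quantities involved are bounded, since $A_T^{-1}\leq2D$ and $\norm{s_t}/\alpha_t\leq D$.
\item The identity $\norm{s_t}/\alpha_t=\sqrt{V_t}$ on damped steps must hold. It follows from \eqref{eq:optimal_damped_weight}, since there $a_t\alpha_t\tau_t=\lambda_t$, so $V_t=\norm{s_t}^2/\alpha_t^2$.
\item The numerical constants must combine as stated.
\end{itemize}
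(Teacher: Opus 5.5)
Your proposal is correct and is the same argument as the paper's proof. You apply the Cauchy--Schwarz split \eqref{eq:goal}, use $\ls\E E_T^2\rs^{1/2}\leq 5\lambda_TD^2$, bound $\ls\E A_T^{-2}\rs^{1/2}\leq 128\ls1+4\sqrt{38}\rs D/T^{7/2}$ via \eqref{eq:optimal_inverse_reduction} and Lemma~\ref{lem:optimal_movement}, and then expand $\lambda_T$; your constants ($256\sqrt{152}=512\sqrt{38}$) and the identity $\norm{s_t}/\alpha_t=\sqrt{V_t}$ on damped steps agree with the paper.
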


\begin{proof}
  Substituting~\eqref{eq:optimal_movement_bounds} into~\eqref{eq:optimal_inverse_reduction} gives
  \begin{equation}
    \label{eq:optimal_inverse_bounds}
    \ls\E A_T^{-2}\rs^{1/2}
    \leq\frac{128D}{T^{7/2}}
    +\frac{512\sqrt{38}D\sqrt T}{T^4}
    =128\ls1+4\sqrt{38}\rs\frac{D}{T^{7/2}}.
  \end{equation}

  Combining~\eqref{eq:optimal_energy_moments} and~\eqref{eq:optimal_inverse_bounds} in~\eqref{eq:goal}, we conclude that
  \begin{gather*}
    \E\lp f(w_T)-f^*\rp
    \leq640\ls1+4\sqrt{38}\rs\frac{\lambda_TD^3}{T^{7/2}}
    \stackrel{\eqref{eq:optimal_reg}}{=}
    640\ls1+4\sqrt{38}\rs
    \ls\frac{\sigma_1D}{\sqrt T}
    +\frac{\sigma_2D^2}{T^2}+\frac{2MD^3}{T^{7/2}}\rs.
  \end{gather*}
\end{proof}

\clearpage
\section{Experimental Details}

\textbf{Hyperparameters for Figure~\ref{fig:ball_reruns}.}
On \texttt{a9a}, the selected values are $M=1$ and
$\sigma_1=0.1$ for both Hessian batch sizes 50 and 1,000. On
\texttt{covtype}, the selected values are $M=0.01$ and $\sigma_1=0.01$ for
both Hessian batch sizes 50 and 1,000.

\textbf{Computing infrastructure.}
All computations use double precision. Hyperparameter sweeps were tuned using NVIDIA A100 GPU.

\end{document}